\documentclass{amsart}
\usepackage{amsaddr}
\usepackage[utf8]{inputenc}
\usepackage[T1]{fontenc}
\usepackage{amsmath}
\usepackage{amsfonts}
\usepackage{amssymb}
\usepackage{mathtools}
\usepackage{hyperref}   
\usepackage{url}
\usepackage{booktabs}
\usepackage{nicefrac}
\usepackage{microtype}
\usepackage{fancyhdr}   
\usepackage{amsthm}
\usepackage{enumerate}
\usepackage{chngcntr}
\usepackage{apptools}
\usepackage{xcolor}
\usepackage[doi=false, url =false , isbn = false,style=ieee,sorting=nyt, backend = biber]{biblatex}
\addbibresource{Reff.bib}

\AtAppendix{\counterwithin{lemma}{section}}

\DeclareMathOperator{\id}{id}
\DeclareMathOperator{\Ad}{Ad}
\DeclareMathOperator{\modnew}{mod}
\DeclareMathOperator{\Aut}{Aut}
\DeclareMathOperator{\vn}{vN}

\DeclareMathOperator{\type}{type}
\DeclareMathOperator{\Int}{Int}
\DeclareMathOperator{\Out}{Out}
\DeclareMathOperator{\Cnt}{Cnt}

\begin{document}

\title{Classification of regular subalgebras of injective type III factors}

\author{Soham Chakraborty}
\address{Department of Mathematics, KU Leuven \\
02.32, Celestijnenlaan 200B, Leuven 3001, Belgium} 
\email{soham.chakraborty@kuleuven.be}

\newtheorem{theorem}{Theorem}[section]
\newtheorem{corollary}[theorem]{Corollary}
\newtheorem{lemma}[theorem]{Lemma}
\newtheorem{proposition}[theorem]{Proposition}
\theoremstyle{definition}
\newtheorem{definition}[theorem]{Definition}
\theoremstyle{remark}
\newtheorem{remark}[theorem]{Remark}
\newtheoremstyle{named}{}{}{\itshape}{}{\bfseries}{.}{.5em}{#3}
\theoremstyle{named}
\newtheorem*{namedtheorem}{Theorem}

\begin{abstract}
We provide a complete classification for regular subalgebras $B \subset M$ of injective factors satisfying a natural relative commutant condition. We show that such subalgebras are classified by their associated amenable discrete measured groupoid $\mathcal{G}= \mathcal{G}_{B \subset M}$ and the action $\modnew(\alpha)$ of $\mathcal{G}$ on the flow of weights induced by the cocycle action $(\alpha,u)$ of $\mathcal{G}$ on $B$. We obtain a similar result for triple inclusions $A \subset B \subset M$ where $M$ is an injective factor, $A$ is a Cartan subalgebra of $M$, and $B \subset M$ is regular, showing that such inclusions are also classified by their associated groupoid $\mathcal{G} = \mathcal{G}_{B \subset M}$ and the induced action on the flow of weights. Given such a discrete measured amenable groupoid $\mathcal{G}$, we also construct a model action of $\mathcal{G}$ on a field of Cartan inclusions with prescribed action on the associated field of flows. 
\\ \\
Keywords: Injective factors, amenable groupoids, ergodic flows, outer actions, regular subalgebras \\
Mathematics Subject Classification 2000: 46L10, 46L40
\end{abstract}

\maketitle

\section{Introduction}
\label{Intro}

For precise definitions and terminology used in the introduction, we direct the reader to the preliminaries in Section \ref{prelim}.  

\textit{Historical overview of the classification of group and groupoid actions}: In the theory of operator algebras, one central point of interest has always been the classification of automorphisms and group actions. The first step in this direction was taken when Connes provided a classification of automorphisms of the hyperfinite $\textrm{II}_{1}$ factor $R$ in \cite{MR394228} and \cite{MR448101}. Soon after, the classification of actions of finite groups on $R$ was done by Jones in \cite{MR587749} and then extended to actions of discrete amenable groups by Ocneanu in \cite{MR807949}, where he proved that any two outer actions of a discrete amenable group on $R$ are cocycle conjugate. The problem of obtaining the correct invariants for discrete amenable group actions on injective type $\textrm{III}$ factors was then solved by Sutherland and Takesaki in \cite{MR842413} and \cite{MR990219} for type $\textrm{III}_{\lambda}$ with $\lambda \neq 1$. They also resolved the remaining $\textrm{III}_{1}$ case together with Kawahigashi in \cite{MR1179014} for discrete abelian groups and with Katayama in \cite{MR1621416} for discrete amenable groups, hence obtaining the complete set of invariants for such actions. The methods in the above papers depend heavily on the type of the factor and the lack of a discrete decomposition in the type $\textrm{III}_{1}$ case makes it more challenging. In \cite{MR2353241}, Masuda gave an Evans-Kishimoto type intertwining argument (\cite{MR1432548}) and classified centrally free actions of discrete amenable groups on injective factors. His proof is independent of the type of the factor. In \cite{MR3181546}, he extended his results to general outer actions of discrete amenable groups on injective factors, hence providing a unified approach to the proof of such a classification theorem. 

Extending the classification theory to actions of discrete measured groupoids on injective factors was first motivated by the classification problem of compact abelian group actions on injective factors. In \cite{MR766264}, Jones and Takesaki classified compact abelian group actions, by first using Pontrjagin duality to reduce the problem to the classification of discrete abelian groups on semifinite von Neumann algebras, and then reducing that to the classification of groupoid actions on semifinite factors. This was further developed by Sutherland and Takesaki in \cite{MR842413} and Kawahigashi and Takesaki in \cite{MR1156672}. For discrete amenable groupoid actions on injective factors of type $\textrm{III}$, the complete set of invariants was obtained in \cite{MR4484234} by Masuda. 

\textit{Regular subalgebras and connections to the classification of actions:} In his seminal paper \cite{MR454659}, Connes showed that a finite von Neumann algebra is amenable if and only if it is approximately finite-dimensional (AFD). One important consequence of this theorem is that a crossed product von Neumann algebra $B \rtimes_{(\alpha,u)} \mathcal{G}$ arising from a free cocycle action of a discrete amenable groupoid $\mathcal{G}$ on a tracial injective von Neumann algebra, which preserves the trace and acts ergodically on the center $\mathcal{Z}(B)$ is always isomorphic to the hyperfinite $\textrm{II}_{1}$ factor $R$. If the subalgebra $B$ and the groupoid $\mathcal{G}$ uniquely determine such a crossed product decomposition of $R$ thus not depending on the action $(\alpha,u)$, was naturally an interesting question to consider. The question can also be rephrased as whether any two regular subalgebras $B_{1}$ and $B_{2}$ satisfying $\mathcal{Z}(B_{i}) = B_{i}' \cap R$ with isomorphic associated groupoids $\mathcal{G}_{B_{i} \subset R}$ are conjugate by an automorphism of $R$. Hence this establishes a bridge between the classification of regular subalgebras of $R$ satisfying the relative commutant condition and the classification of cocycle actions of an amenable discrete measured groupoid on $R$. 

In the case when $B$ is abelian, i.e., a Cartan subalgebra of $R$, Feldman-Moore theory (\cite{MR578656} and \cite{MR578730}) shows that the associated groupoid is principal, i.e. a countable p.m.p. ergodic equivalence relation.  
The uniqueness problem was solved in this case by Connes, Feldman, and Weiss in \cite{MR662736}.
On the other hand, when $B$ is a factor, $\mathcal{G}_{B \subset R} = \mathcal{N}_{R}(B)$ is a discrete amenable group and the uniqueness theorem follows from Ocneanu's classification in \cite{MR807949}. In the general case, even though groupoid actions were classified back in \cite{MR842413}, the uniqueness problem of all cocycle actions $(\alpha,u)$ preserving the trace was recently solved by Popa, Shlyakhtenko, and Vaes who showed in \cite{MR4124434} that any such 2-cocycle untwists. Combined with \cite{MR842413}, this proves that there is a unique trace-preserving cocycle action of $\mathcal{G}$ on $B$, or equivalently that two such inclusions $B_{i} \subset R$ that have isomorphic associated groupoids are  conjugate by an automorphism of $R$. 

\textit{First main theorem and a sketch of the proof:} The results in \cite{MR4124434} motivated us to study the classification problem for such subalgebras of a general injective factor (typically of type $\textrm{III}$). We investigate when two regular subalgebras $B_{i} \subset M$ for an injective factor $M$ are conjugate by an automorphism of $M$.  The first main result of this paper settles this question in the case when they satisfy a relative commutant condition at the level of the continuous cores. In this case, the invariant is the Connes-Takesaki module map of the associated discrete measured groupoid action. To be more precise, in Theorem \ref{RegularInclusionsClassification}, we prove the following:

\begin{namedtheorem}[Theorem A]
    Let $M$ be an injective factor and for $i \in \{1,2\}$, let $B_{i} \subset M$ be regular subalgebras with conditional expectation $E_{i}: M \rightarrow B_{i}$ and satisfying $\mathcal{Z}(\widetilde{B}_{i}) = \widetilde{B}_{i} \cap \widetilde{M}$. Let $\mathcal{G}_{i}$ be the associated discrete measured amenable groupoids and $(\alpha_{i},u_{i})$ be the associated cocycle actions on $B_{i}$. Then there exists an automorphism $\theta \in \Aut(M)$ such that $\theta(B_{1}) = B_{2}$ if and only if $\type(B_{1}) = \type(B_{2})$ and there exists a groupoid isomorphism $\sigma: \mathcal{G}_{1} \rightarrow \mathcal{G}_{2}$ which conjugates the modules, i.e.,  $\modnew(\alpha_{1}) \sim_{\sigma} \modnew(\alpha_{2})$. 
\end{namedtheorem}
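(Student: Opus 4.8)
The plan is to reduce the classification of regular inclusions $B \subset M$ satisfying the relative commutant condition at the level of the cores to the classification of cocycle actions of amenable discrete measured groupoids on injective von Neumann algebras, which is available via the work cited in the introduction (Masuda \cite{MR4484234}, combined with the untwisting results in the spirit of \cite{MR4124434} and the Connes--Takesaki module invariant). One direction is essentially formal: if $\theta \in \Aut(M)$ carries $B_{1}$ onto $B_{2}$, then $\theta$ extends canonically to an isomorphism $\widetilde{\theta}$ of the continuous cores carrying $\widetilde{B}_{1}$ onto $\widetilde{B}_{2}$ and intertwining the dual traces and trace-scaling actions, hence it induces a type-preserving groupoid isomorphism $\sigma: \mathcal{G}_{1} \to \mathcal{G}_{2}$, and because $\widetilde{\theta}$ commutes (up to inner perturbation) with the trace-scaling flows it transports the module action $\modnew(\alpha_{1})$ to $\modnew(\alpha_{2})$ on the flow of weights; so $\modnew(\alpha_{1}) \sim_{\sigma} \modnew(\alpha_{2})$.

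For the converse, which is the substantive direction, I would proceed as follows. First, reconstruct $M$ from the data $(\mathcal{G}_{i}, (\alpha_{i},u_{i}))$: the regularity of $B_{i} \subset M$ together with the conditional expectation $E_{i}$ exhibits $M$ as a crossed-product-type algebra $B_{i} \rtimes_{(\alpha_{i},u_{i})} \mathcal{G}_{i}$ (a groupoid cocycle crossed product), and amenability of $M$ forces $\mathcal{G}_{i}$ to be amenable while the relative commutant condition $\mathcal{Z}(\widetilde{B}_{i}) = \widetilde{B}_{i}' \cap \widetilde{M}$ guarantees that the cocycle action is suitably outer (properly outer on the continuous core, so that the groupoid is recovered intrinsically as $\mathcal{G}_{B_{i} \subset M}$). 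Next, pass to the continuous cores: $\widetilde{M} = \widetilde{B_{i} \rtimes \mathcal{G}_{i}}$ carries a trace-scaling $\mathbb{R}$-action, and the relative commutant condition means that on the core the action of $\mathcal{G}_{i}$ on $\widetilde{B}_{i}$ has the center $\mathcal{Z}(\widetilde{B}_{i})$ as its fixed-point-type data, with the residual action on $\mathcal{Z}(\widetilde{B}_{i})$ — i.e. on the flow of weights of $B_{i}$ — being exactly $\modnew(\alpha_{i})$.

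The heart of the argument is then a uniqueness statement for cocycle actions: given the groupoid isomorphism $\sigma$ and the conjugacy $\modnew(\alpha_{1}) \sim_{\sigma} \modnew(\alpha_{2})$ of the module actions on the flows, together with $\type(B_{1}) = \type(B_{2})$ (which pins down $\widetilde{B}_{1} \cong \widetilde{B}_{2}$ as injective von Neumann algebras with $\mathbb{R}$-action, by the classification of injective factors and of their flows of weights), I would show that the cocycle actions $(\alpha_{1},u_{1})$ and $(\sigma^{*}\alpha_{2}, \sigma^{*}u_{2})$ of $\mathcal{G}_{1}$ on $B_{1}$ are cocycle conjugate. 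This is where I expect the main obstacle to lie: it requires an Ocneanu--Masuda-type cohomology-vanishing and intertwining argument for groupoid cocycle actions on type $\mathrm{III}$ injective algebras, done \emph{relative} to the prescribed action on the flow of weights — i.e. one must untwist the 2-cocycle $u$ and absorb the difference of the $\alpha$'s by a cocycle-conjugacy while keeping the induced module action fixed, which is precisely the type $\mathrm{III}$ analogue of the trace-preserving untwisting of \cite{MR4124434}. Granting this, the cocycle conjugacy of the actions lifts to an isomorphism $\widetilde{M} \to \widetilde{M}$ carrying $\widetilde{B}_{1}$ to $\widetilde{B}_{2}$ and commuting with the trace-scaling flows, hence descends to the desired $\theta \in \Aut(M)$ with $\theta(B_{1}) = B_{2}$; a final check is that the expectations $E_{i}$ are automatically respected since they are the canonical ones associated to the crossed-product structure.
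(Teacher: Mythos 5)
Your plan is correct and follows essentially the same route as the paper: your reduction steps correspond exactly to Lemma \ref{CentrallyFreeCondition} (the core relative commutant condition is central freeness of the action), Theorem \ref{2CohomologyVanishingGroupoidsAlternate} (a Packer--Raeburn stabilization untwisting the $2$-cocycle over infinite fibres), and Theorem \ref{GroupoidActionsClassification} (cocycle conjugacy of centrally free amenable groupoid actions with conjugate Connes--Takesaki modules, obtained from the isotropy-group classification together with the equivariant-selection lemma of \cite{MR4124434}), after which the cocycle conjugacy directly identifies the two crossed products without any detour through the core. The only detail you gloss over is that fibres of finite type ($\textrm{I}_{n}$, $\textrm{II}_{1}$) need a separate argument via \cite{MR662736} and \cite{MR4124434}, since the stabilization trick requires infinite factors.
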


We remark here that for such a regular subalgebra $B$ of a factor $M$, almost every factor in the ergodic decomposition of $B$ has a constant type, that we denote by $\type(B)$. We explain this rigorously in Lemma \ref{constanttype} and Definition \ref{defconstanttype}. In Lemma \ref{CentrallyFreeCondition}, we show that the relative commutant condition on the canonical cores as above is equivalent to the associated actions being centrally free, in the sense of Ocneanu \cite{MR807949}. 
Therefore the proof of Theorem A boils down to a 2-cohomology vanishing problem and the classification of centrally free actions of a discrete measured amenable groupoid on a measurable field of injective factors. We provide in Theorem \ref{GroupoidActionsClassification} a new proof of this classification result which is a special case of the more general classification of free actions due to Masuda \cite{MR4484234}. For the proof, we first reduce the problem to the classification of genuine actions, noting in Theorem \ref{2CohomologyVanishingGroupoidsAlternate} that any 2-cocycle on an infinite factor is a coboundary by using a Packer-Raeburn \cite{MR1066817} type stabilization trick. Then we apply the known classification results up to cocycle conjugacy for amenable group actions on injective factors (\cite{MR807949}, \cite{MR842413}, \cite{MR990219}, \cite{MR1179014}, \cite{MR1621416}, \cite{MR2353241}) to the isotropy groups $\Gamma_{x} = \{g \in \mathcal{G} \; | \; s(g) = t(g) = x\}$ of an amenable groupoid $\mathcal{G}$. To extend this to actions of the groupoid, we need to make equivariant choices of cocycle conjugacies for the isotropy groups with respect to isomorphisms $\Gamma_{y} \rightarrow \Gamma_{x}$ given by conjugating with a groupoid element $g$ such that $s(g) = x$ and $t(g) = y$ as in \cite{MR4124434}.  We use a technical cohomology lemma (\cite[Theorem 3.5]{MR4124434}) to show that such an equivariant choice exists if every cocycle self conjugacy for each $\Gamma_{x}$ is approximately inner. More precisely, suppose $\alpha$ is an action of an amenable group $G$ on an injective factor $M$. Suppose $c$ is a 1-cocycle for $\alpha$ and $\theta \in \Aut(M)$ such that $\theta \circ \alpha_{g} \circ \theta^{-1} = \Ad(c_{g}) \circ \alpha_{g}$. Then we show the existence of a sequence of unitaries $w_{n} \in M$ such that $\Ad(w_{n}) \rightarrow \theta$ in the u-topology of $\Aut(B)$ and $w_{n}^{*}\alpha_{g}(w_{n}) \rightarrow c_{g}$ in the topology of pointwise convergence in the space of 1-cocycles of $\alpha$. We prove this in Lemma \ref{CocycleSelfConjugacyNonCartan} by using some ultrapower techniques and Ocneanu's 1-cohomology vanishing result (\cite[Proposition 7.2]{MR807949}). 

\textit{Regular subalgebras containing a Cartan subalgebra:} The next part of this paper deals with a relative version of Theorem A, similar to the case of the hyperfinite $\textrm{II}_{1}$ factor in \cite{MR4124434}. In Proposition \ref{StrongNormality}, we prove that if $A \subset M$ is a Cartan subalgebra of an injective factor, then an intermediate subalgebra $A \subset B \subset M$ is regular in $M$ if and only if the Feldman-Moore equivalence relation $\mathcal{R}_{A \subset B} \subset \mathcal{R}_{A \subset M}$ is strongly normal in the sense of \cite{MR1007409}. The question that we are interested in then can be stated as follows. Given two such inclusions $A_{i} \subset B_{i} \subset M$, with $B_{i}$ regular, when are they conjugate by an automorphism in $M$, i.e., there exists $\theta \in \Aut(M)$ with $\theta(B_{1}) = B_{2}$ and $\theta(A_{1}) = A_{2}$. As before, this boils down to classifying actions of the associated discrete measured groupoids $\mathcal{G}_{B_{i} \subset M}$ on measurable fields of Cartan inclusions into injective factors and thus on their associated ergodic equivalence relations up to cocycle conjugacy.  

\textit{Historical overview of the classification of group and groupoid actions on ergodic equivalence relations:} Connes and Krieger in \cite{MR0444900} gave a classification up to outer conjugacy, of outer automorphisms of an ergodic hyperfinite equivalence relation preserving a finite (type $\textrm{II}_{1}$) or infinite (type $\textrm{II}_{\infty}$) measure. This was extended to type $\textrm{III}$ (with no invariant measure) ergodic hyperfinite equivalence relations by Bezuglyi and Golodets in \cite{MR780502}. The next step was to extend the classification results to outer actions of amenable groups. This became possible after the existence and uniqueness problem of cocycles of hyperfinite equivalence relations with dense ranges in amenable groups was solved in \cite{golodets1983existence} by Golodets and Sinelshchikov. The complete set of invariants for amenable group actions on ergodic hyperfinite equivalence relations up to cocycle conjugacy was then found by Bezuglyi and Golodets in \cite{MR864169} (for measure preserving equivalence relations), \cite{MR906080} (for type $\textrm{III}_{\lambda}$ with $\lambda \neq 0$) and finally in \cite{MR1002120} (for the type $\textrm{III}_{0}$ case). Recently in \cite{https://doi.org/10.48550/arxiv.2210.15916}, Masuda gave a new unified proof of the classification theorem. For the convenience of the reader, we state this result as Theorem \ref{conjugaceyOfGroupActionsOnEquivalenceRelation} in this paper.

\textit{Second main theorem and sketch of the proof:} In \cite{MR4124434} the authors showed that two cocycle actions of a discrete measured groupoid $\mathcal{G}$ with unit space $X$ on a measurable field of Cartan inclusions $(A_{x} \subset B_{x})_{x \in X}$, such that the action globally preserves the field of Cartan subalgebras $(A_{x})_{x \in X}$ and with each $B_{x}$ being the hyperfinite $\textrm{II}_{1}$ factor such that the induced actions on the field of ergodic equivalence relations $(\mathcal{S}_{x})_{x \in X}$ are outer, are cocycle conjugate. This in turn shows that two pairs of triple inclusions $A_{i} \subset B_{i} \subset R$ with $A_{i} \subset R$ being Cartan subalgebras and $B_{i} \subset R$ being regular are  conjugate an automorphism in $M$ if and only if the associated groupoids are isomorphic. In this paper, we prove a more general classification theorem for any field of injective factors with Cartan subalgebras and show that the invariant is the action on the field of flows associated with the equivalence relations. More precisely, in Theorem \ref{classification of inclusionsCartan}, we prove the following: 

\begin{namedtheorem}[Theorem B]
    Let $M$ be an injective factor and for $i \in \{1,2\}$, let $A_{i} \subset B_{i} \subset M$ be a pair of inclusions, where $A_{i} \subset M$ are Cartan subalgebras and $B_{i} \subset M$ are regular. Let $\mathcal{R}_{i}$ be the corresponding Feldman-Moore equivalence relations for the Cartan inclusions $A_{i} \subset B_{i}$. Let $\mathcal{G}_{i}$ be the discrete measured amenable groupoids associated to the inclusions $B_{i} \subset M$ and $(\alpha_{i},u_{i})$ be the associated cocycle actions on $\mathcal{R}_{i}$. Then there exists an automorphism $\theta \in \Aut(M)$ such that $\theta(B_{1}) = B_{2}$ and $\theta(A_{1}) = A_{2}$ if and only if $\type(B_{1}) = \type(B_{2})$ and there exists a groupoid isomorphism $\sigma: \mathcal{G}_{1} \rightarrow \mathcal{G}_{2}$ which conjugates the actions on the flows, i.e.,  $\modnew(\alpha_{1}) \sim_{\sigma} \modnew(\alpha_{2})$. 
\end{namedtheorem}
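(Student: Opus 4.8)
The plan is to reduce Theorem B to the classification of cocycle actions of the associated amenable groupoid $\mathcal{G}_i$ on the measurable field of Cartan inclusions $(A_{i,x} \subset B_{i,x})_{x \in X_i}$, exactly as Theorem A is reduced to the classification of centrally free cocycle actions on a field of injective factors. Concretely, I would first show that the ``only if'' direction is routine: an automorphism $\theta$ with $\theta(B_1) = B_2$ and $\theta(A_1) = A_2$ transports the normalizing groupoid $\mathcal{G}_{B_1 \subset M}$ onto $\mathcal{G}_{B_2 \subset M}$, hence induces the groupoid isomorphism $\sigma$, and since $\theta$ extends to the continuous cores it intertwines the induced actions on the fields of flows, giving $\modnew(\alpha_1) \sim_\sigma \modnew(\alpha_2)$; constancy of type is preserved because $\theta$ is a $*$-isomorphism. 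The substance is the converse.

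For the ``if'' direction, I would use the disintegration of $A_i \subset B_i \subset M$ over the unit space of $\mathcal{G}_i$. Since $B_i \subset M$ is regular with $\mathcal{G}_i$ amenable, $M \cong B_i \rtimes_{(\alpha_i, u_i)} \mathcal{G}_i$, and because $A_i$ is Cartan in $M$ and contained in $B_i$, $A_i$ must be Cartan in $B_i$; disintegrating, we get a field of Cartan inclusions $(A_{i,x} \subset B_{i,x})$ with $B_{i,x}$ injective of the common type $\type(B_i)$, together with a cocycle action of $\mathcal{G}_i$ preserving the field of Cartan subalgebras, whose induced action on the field of Feldman--Moore equivalence relations $\mathcal{R}_i = (\mathcal{R}_{i,x})$ I denote $(\alpha_i, u_i)$ on $\mathcal{R}_i$. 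Given the groupoid isomorphism $\sigma$ conjugating the modules, it suffices to prove that these two cocycle actions are cocycle conjugate via $\sigma$; combined with the reconstruction of $M$ as a crossed product, this yields the desired $\theta$. As in the proof of Theorem A, I would split this into (i) the 2-cohomology vanishing statement, using the Packer--Raeburn stabilization trick from Theorem \ref{2CohomologyVanishingGroupoidsAlternate} to untwist $u_i$ (noting that the relevant factors are infinite, or reducing to that case), so that we may assume $u_i$ is trivial; and (ii) the classification of genuine $\mathcal{G}_i$-actions on the field of Cartan inclusions, which is proved by restricting to the isotropy groups $\Gamma_x$, invoking the classification of amenable group actions on ergodic hyperfinite (here, injective-factor-with-Cartan) equivalence relations up to cocycle conjugacy (Theorem \ref{conjugaceyOfGroupActionsOnEquivalenceRelation}), and then making equivariant choices of the cocycle conjugacies along the groupoid using the cohomology lemma \cite[Theorem 3.5]{MR4124434}.

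The main obstacle, as in Theorem A, will be the equivariance of the choices: the cohomology lemma requires that every cocycle self-conjugacy of each isotropy action on $\mathcal{R}_{i,x}$ be, in an appropriate sense, approximately inner along the equivalence relation. This is the relative (Cartan) analogue of Lemma \ref{CocycleSelfConjugacyNonCartan}, and I expect it to require its own argument: given an amenable group $G$ acting on an ergodic equivalence relation $\mathcal{S}$ (realized on $A \subset B$), a $1$-cocycle $c$, and $\theta \in \Aut(\mathcal{S})$ with $\theta \alpha_g \theta^{-1} = \Ad(c_g)\alpha_g$ (automorphisms and $\Ad$ understood within the full group of $\mathcal{S}$, so that $A$ is preserved), one must produce a sequence of unitaries $w_n$ normalizing $A$ with $\Ad(w_n) \to \theta$ and $w_n^* \alpha_g(w_n) \to c_g$. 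The natural route is to mirror the non-Cartan proof: pass to the asymptotic centralizer / ultrapower relative to $A$, use the classification of the isotropy action together with the $1$-cohomology vanishing for amenable group actions on hyperfinite equivalence relations (the analogue, in the equivalence-relation setting, of \cite[Proposition 7.2]{MR807949}, available through \cite{MR864169, MR906080, MR1002120} or Masuda's unified treatment \cite{https://doi.org/10.48550/arxiv.2210.15916}), and transfer back. Once this relative self-conjugacy lemma is in place, the equivariant gluing and the rest of the argument proceed in parallel with Theorem A, and the flow-of-weights invariant $\modnew(\alpha_i)$ enters precisely through the statement of Theorem \ref{conjugaceyOfGroupActionsOnEquivalenceRelation} applied to the isotropy groups.
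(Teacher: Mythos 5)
Your proposal follows essentially the same route as the paper: the ``only if'' direction is routine, and the converse is reduced to untwisting the $2$-cocycle and then classifying genuine Cartan-preserving actions of the amenable groupoid by restricting to the isotropy groups, invoking Theorem \ref{conjugaceyOfGroupActionsOnEquivalenceRelation}, gluing equivariantly via \cite[Theorem 3.5]{MR4124434}, and supplying the Cartan analogue of the approximate-innerness lemma (which the paper proves as Lemma \ref{approximateInnernessOfSelfConjugacyCartan}, using the Feldman--Moore decomposition of $\Aut(B,A)$, the density of the full group in $\ker(\modnew)$, an Ornstein--Weiss Rohlin argument and the index selection trick). The one point to correct is that the cocycle must be untwisted by unitaries normalizing the Cartan subalgebras, i.e.\ via the Cartan version Theorem \ref{CohomologyVanishingGroupoidsCartanAlternate} rather than Theorem \ref{2CohomologyVanishingGroupoidsAlternate}, since otherwise the perturbed action need not preserve the field of Cartan subalgebras and the subsequent classification would not produce a $\theta$ carrying $A_{1}$ onto $A_{2}$.
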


We follow a similar strategy as earlier to prove Theorem B. In Lemma \ref{CohomologyVanishingGroupoidsCartanAlternate}, we show that for an outer cocycle action of a discrete measured groupoid on a field of infinite equivalence relations, the cocycle is a coboundary, hence reducing the problem to the classification of genuine actions. We use the Bezuglyi-Golodets classification results for actions of amenable groups and apply them to the isotropy groups $\Gamma_{x}$ of the groupoid. Once again we can use \cite[Theorem 3.5]{MR4124434} to extend this to actions of amenable groupoids if we can show that every cocycle self conjugacy for $\Gamma_{x}$ is approximately inner. In this context of Cartan inclusions, we formulate this more precisely as follows. Let $\alpha$ be an action of an amenable group $G$ on an injective factor $M$ such that it preserves a Cartan subalgebra $A \subset M$ and such the associated action on the equivalence relation in outer. Let $c$ be a 1-cocycle for $\alpha$ taking values in $\mathcal{N}_{M}(A)$ and $\theta \in \Aut(M)$ with $\theta(A) = A$ such that $\theta \circ \alpha_{g} \circ \theta^{-1} = \Ad(c_{g}) \circ \alpha_{g}$. Then in Lemma \ref{approximateInnernessOfSelfConjugacyCartan} we show that there exists a sequence of unitaries $w_{n} \in \mathcal{N}_{M}(A)$ such that $\Ad(w_{n}) \rightarrow \theta$ in the u-topology in $\Aut(M)$ and $w_{n}^{*}c_{g}\alpha_{g}(w_{n}) \rightarrow c_{g}$ in the topology of pointwise convergence. In Theorem \ref{CocycleCOnjugacyForActionsOnEquivalenceRelations} we thus show that two actions of an amenable groupoid on a field of Cartan inclusions are cocycle conjugate if and only if the actions on the associated field of flows are conjugate. We note that this is a generalization of \cite[Theorem 2.4]{https://doi.org/10.48550/arxiv.2210.15916} for amenable groupoid actions on fields of ergodic equivalence relations. 

\textit{Model actions and third main theorem:} In the final section, we construct examples of actions realizing the invariants. We use the notion of adjoint flows introduced by Vaes and Verjans in \cite{vaes_verjans_2022}, where they construct for any given ergodic flow $(F_{t})_{t \in \mathbb{R}}$, a canonical countable ergodic equivalence relation $\mathcal{R}_{F}$ whose associated flow is the adjoint $\hat{F}_{t}$ of the given flow. We show in Proposition \ref{modelAutomorphism} that any $\mathbb{R}$-equivariant nonsingular isomorphism $\psi: F \rightarrow F'$ between two flows induces a canonical `adjoint' isomorphism $\hat{\psi}: \hat{F} \rightarrow \hat{F'}$ between their adjoint flows and canonically lifts to a nonsingular isomorphism between the ergodic equivalence relations $\Psi: \mathcal{R}_{F} \rightarrow \mathcal{R}_{F'}$ which induces the adjoint isomorphism on the associated flows, i.e., $\modnew(\Psi) = \hat{\psi}$. We note in Theorem \ref{modelAction} that the construction of the adjoint flow is sufficiently canonical to extend to groupoid actions on fields of flows. As an immediate corollary, we can construct regular subalgebras realizing the invariants obtained in Theorem B as follows: 

\begin{namedtheorem}[Theorem C]
    Let $\mathcal{G}$ be a discrete measured ergodic groupoid that is amenable with unit space $(X,\mu)$. Let $(F_{x})_{x \in X}$ be a field of ergodic flows such that the set of points $x \in X$ for which $F_{x}$ is the translation action $\mathbb{R} \curvearrowright \mathbb{R}$ has measure zero. Let $\psi$ be an action of $\mathcal{G}$ on the field of flows $(F_{x})_{x \in X}$. Then there exists an injective factor $M$ with a Cartan subalgebra $A \subset M$ and an intermediate regular subalgebra $A \subset B \subset M$ of type $\textrm{III}$ such that there is a groupoid isomorphism $\sigma: \mathcal{G}_{B \subset M} \rightarrow \mathcal{G}$ and the associated cocycle action $(\Psi,u)$ of $\mathcal{G}_{B \subset M}$ on the field of ergodic equivalence relations given by $A \subset B$ satisfy $\modnew(\Psi) \sim_{\sigma} \psi$. 
\end{namedtheorem}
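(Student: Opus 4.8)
The plan is to deduce Theorem C from Theorem \ref{modelAction} by running the adjoint-flow construction on the \emph{fieldwise adjoint} of $(F_{x})_{x\in X}$ and then passing to a groupoid crossed product. Since the adjoint operation $F\mapsto\hat F$ on ergodic flows is involutive (a feature of the Vaes--Verjans construction of \cite{vaes_verjans_2022}, and the reason for the terminology), write $\hat F_{x}$ for the adjoint of $F_{x}$ and let $\hat\psi$ denote the induced adjoint action of $\mathcal{G}$ on the field $(\hat F_{x})_{x\in X}$ furnished by Theorem \ref{modelAction}. Applying the lifting part of Theorem \ref{modelAction} to the pair $(\hat F_{x},\hat\psi)$ produces a model action $\Psi$ of $\mathcal{G}$ on the measurable field of Cartan inclusions $(A_{x}\subset B_{x})_{x\in X}$, where $B_{x}=L\mathcal{R}_{\hat F_{x}}$ and $A_{x}\subset B_{x}$ is the Feldman--Moore Cartan subalgebra of $\mathcal{R}_{\hat F_{x}}$, whose induced action on the associated field of flows is $\modnew(\Psi)=\widehat{\hat\psi}=\psi$. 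Because the flow associated with $\mathcal{R}_{\hat F_{x}}$ is $\widehat{\hat F_{x}}=F_{x}$, the hypothesis that $F_{x}\cong(\mathbb{R}\curvearrowright\mathbb{R})$ only on a null set forces each $\mathcal{R}_{\hat F_{x}}$, and hence each $B_{x}$, to be a hyperfinite factor of type $\textrm{III}$ for almost every $x$.

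Next I would build the inclusion as a groupoid crossed product. Put $B=\int_{X}^{\oplus}B_{x}\,d\mu(x)$, $A=\int_{X}^{\oplus}A_{x}\,d\mu(x)$ and $M=B\rtimes_{\Psi}\mathcal{G}$, and read off the required properties from the general theory recalled in Section \ref{prelim}. First, $B$ is a regular subalgebra of $M$; since $\Psi$ acts by isomorphisms of the equivalence relations $\mathcal{R}_{\hat F_{x}}$ it globally preserves the field $(A_{x})$, so the implementing partial isometries normalize $A$, and as each $A_{x}\subset B_{x}$ is maximal abelian and the model action is properly outer, $A$ is a Cartan subalgebra of $M$ with $A\subset B\subset M$. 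Second, $\mathcal{Z}(B)=L^{\infty}(X)$, on which $\mathcal{G}$ acts ergodically (it is an ergodic groupoid), while $\mathcal{G}$ is amenable and each $B_{x}$ is injective, so $M$ is an injective factor. Third, proper outerness of $\Psi$ makes the canonical map an isomorphism $\sigma\colon\mathcal{G}_{B\subset M}\to\mathcal{G}$, and under $\sigma$ the cocycle action of $\mathcal{G}_{B\subset M}$ associated with $B\subset M$ is cocycle conjugate to $(\Psi,1)$; since $\mathcal{R}_{A_{x}\subset B_{x}}=\mathcal{R}_{\hat F_{x}}$ by Feldman--Moore, this is precisely the associated action on the field of ergodic equivalence relations given by $A\subset B$, and $\modnew(\Psi)\sim_{\sigma}\psi$ by the first paragraph. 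Finally, $B$ being regular in the factor $M$ with $B_{x}$ of constant type $\textrm{III}$ almost everywhere, Lemma \ref{constanttype} and Definition \ref{defconstanttype} give $\type(B)=\textrm{III}$.

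The point I expect to need genuine care, rather than bookkeeping, is the proper outerness of the canonical lift $\Psi$: one must know that for a.e. $x$ the isotropy action $\Gamma_{x}\curvearrowright\mathcal{R}_{\hat F_{x}}$ is outer and that the lift is free in the transverse directions, since this is exactly what is used both for $A$ to be maximal abelian in $M$ and for $\mathcal{G}_{B\subset M}$ to be $\mathcal{G}$ itself rather than a proper subgroupoid. As in the type $\textrm{II}_{1}$ situation treated in \cite{MR4124434} and \cite{https://doi.org/10.48550/arxiv.2210.15916}, one expects this outerness to be intrinsic to the Vaes--Verjans construction, so that the only additional work is to transport it through the fieldwise and groupoid versions; this, together with the measurability in $x$ and the compatibility with groupoid composition of the whole construction, is what is packaged into Theorem \ref{modelAction} (''sufficiently canonical''), and can be invoked directly, which is why the statement is phrased as an immediate corollary.
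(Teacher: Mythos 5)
Your proposal is correct and follows essentially the same route as the paper: pass to the field of adjoint flows $(\hat F_x)$ with the induced adjoint action $\hat\psi$, invoke Theorem \ref{modelAction} to lift it to an outer action on the field $(\mathcal{R}_{\hat F_x})$ with module $\hat{\hat\psi}=\psi$, note that the non-translation hypothesis forces type $\textrm{III}$, and form the groupoid crossed product $M=B\rtimes_\Psi\mathcal{G}$ to obtain the triple $A\subset B\subset M$. Your extra verification of outerness, factoriality, and the identification $\mathcal{G}_{B\subset M}\cong\mathcal{G}$ is just a more explicit account of what the paper dismisses as ``by construction'' (followed by an appeal to Theorem B), so there is no substantive difference.
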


We remark that the triple inclusion $A\subset B \subset M$ in the statement of the theorem is unique in the sense that if there exists another triple $A_{1}\subset B_{1} \subset M_{1}$ satisfying the conclusion of the theorem, there is a $*$-isomorphism $\theta: M \rightarrow M_{1}$ such that $\theta(B) = B_{1}$ and $\theta(A) = A_{1}$, essentially by Theorem B. Now, Theorem A states that for a regular subalgebra $B \subset M$, if $B$ satisfies the relative commutant condition at the level of the continuous cores, then it is uniquely determined by its associated groupoid and the action on the associated field of flows. Since by Theorem \ref{modelAction}, any discrete measured amenable groupoid admits a model action on $B$ fixing a Cartan subalgebra $A$ of $M$, we prove in \ref{CartanIffRelativeCommutant} the following corollary to Theorem A and Theorem C. 

\begin{namedtheorem}[Corollary D]
    Let $M$ be an injective factor and $B$ be a regular subalgebra of $M$ of type $\textrm{III}$ with a conditional expectation $E:M \rightarrow B$. Then $\mathcal{Z}(\widetilde{B}) = \widetilde{B}' \cap \widetilde{M}$ if and only if there exists a Cartan subalgebra $A$ of $M$ such that $A \subset B \subset M$. 
\end{namedtheorem}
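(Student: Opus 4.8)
The plan is to deduce this as a consequence of Theorem A and Theorem C, treating the two implications separately. The backward direction is the easy one: if there is a Cartan subalgebra $A$ with $A \subset B \subset M$, then $A$ is in particular a Cartan subalgebra of $B$, so $A' \cap B = A$, and since $A$ is regular in $M$ and maximal abelian one has good control over the continuous core. Concretely, passing to the continuous cores, $\widetilde{A} \subset \widetilde{B} \subset \widetilde{M}$ with $\widetilde{A}$ a Cartan subalgebra of $\widetilde{M}$ (the core of a Cartan inclusion is again a Cartan inclusion, as in Feldman--Moore theory adapted to the type III setting). Then $\widetilde{B}' \cap \widetilde{M} \subset \widetilde{A}' \cap \widetilde{M} = \widetilde{A} \subset \widetilde{B}$, so $\widetilde{B}' \cap \widetilde{M} = \mathcal{Z}(\widetilde{B})$, which is the relative commutant condition.

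For the forward direction, assume $\mathcal{Z}(\widetilde{B}) = \widetilde{B}' \cap \widetilde{M}$. By Theorem A (more precisely by the structure theory developed for Theorem A), the inclusion $B \subset M$ has an associated discrete measured amenable groupoid $\mathcal{G} = \mathcal{G}_{B \subset M}$ together with the module data $\modnew(\alpha)$, i.e. the induced action of $\mathcal{G}$ on the associated field of flows $(F_x)_{x \in X}$. Since $B$ is of type III, the hypothesis on $\type(B)$ in Theorem C is met: for almost every $x$ the flow $F_x$ is not the translation action $\mathbb{R} \curvearrowright \mathbb{R}$ (that flow corresponds to the semifinite case, which is excluded by $\type(B) = \mathrm{III}$; this should be recorded using Lemma \ref{constanttype} and Definition \ref{defconstanttype}). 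Applying Theorem C to the groupoid $\mathcal{G}$, the field of flows $(F_x)$, and the action $\psi = \modnew(\alpha)$, we obtain an injective factor $M_0$, a Cartan subalgebra $A_0 \subset M_0$, and an intermediate regular subalgebra $A_0 \subset B_0 \subset M_0$ of type III, whose associated groupoid is isomorphic (via some $\sigma$) to $\mathcal{G}$ and whose module action is conjugate to $\modnew(\alpha)$ via $\sigma$.

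It remains to transport the Cartan subalgebra $A_0$ from $M_0$ back to $M$. For this I would verify that the model inclusion $B_0 \subset M_0$ satisfies the relative commutant condition $\mathcal{Z}(\widetilde{B_0}) = \widetilde{B_0}' \cap \widetilde{M_0}$ — but this is exactly the backward implication applied to $A_0 \subset B_0 \subset M_0$, which we have already proved. Hence both $B \subset M$ and $B_0 \subset M_0$ are regular inclusions of type III satisfying the relative commutant condition at the level of cores, with isomorphic associated groupoids and conjugate module actions. Theorem A then furnishes an isomorphism $\theta : M_0 \to M$ with $\theta(B_0) = B$; setting $A := \theta(A_0)$ gives a Cartan subalgebra of $M$ with $A \subset B \subset M$, as desired. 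The main obstacle I anticipate is purely bookkeeping: checking that "type III" on the $B$-side translates precisely to the hypothesis of Theorem C that the set of $x$ with $F_x$ the translation flow is null, and making sure the groupoid/module invariants extracted from $B \subset M$ are literally the inputs that Theorem C consumes, so that Theorem A can be re-applied to close the loop.
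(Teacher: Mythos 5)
Your proposal is correct and follows essentially the same route as the paper: the backward implication via $\widetilde{B}'\cap\widetilde{M}\subset\widetilde{A}'\cap\widetilde{M}=\widetilde{A}\subset\widetilde{B}$, and the forward implication by building a model triple with the same groupoid and module invariants and transporting the Cartan subalgebra through the classification theorem. The only cosmetic difference is that the paper invokes the underlying Theorems \ref{modelAction} and \ref{GroupoidActionsClassification} directly rather than the packaged Theorems A and C, which is the same argument at a different level of abstraction.
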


Most of our results are type III generalizations of the type $\textrm{II}_{1}$ results in \cite{MR4124434}. Therefore Sections \ref{sectionnoncartan} and \ref{sectioncartan} in our paper are structured similarly as \cite{MR4124434}. The main difficulty and novelty in our paper stems from the fact that outer actions of amenable groups on type $\textrm{III}$ factors are no longer unique, but classified by modular data that we have to handle. For clarity, we set up our notation and terminology in a similar way as the authors of \cite{MR4124434}.

\section{Preliminaries}
\label{prelim}
\subsection{Injective factors and ergodic decompositions }
We start with some basic definitions and known results from the theory of injective factors. For all results in this paper, we shall only consider von Neumann algebras acting on separable Hilbert spaces.  

\begin{definition}
A von Neumann algebra $M$ acting on a separable Hilbert space $\mathcal{H}$ is called \textit{injective} if there exists a projection $E: \mathcal{B}(\mathcal{H}) \rightarrow M$ with $\|E\| = 1$. It follows from this that $E \geq 0$ and by \cite{MR96140}, $E$ is a (not necessarily normal) conditional expectation.
\end{definition}

\begin{definition}
    A von Neumann subalgebra $B \subset M$ is called \textit{regular} if the von Neumann algebra generated by the normalizer $\mathcal{N}_{M}(B) = \{u \in \mathcal{U}(M) \; | \; uBu^{*} = B\}$ is equal to $M$.  
\end{definition}

Connes in \cite[Theorem 6]{MR454659} showed that a von Neumann algebra is injective if and only if it is approximately finite dimensional (AFD), i.e., it is the weak closure of an increasing union of finite dimensional von Neumann subalgebras. Clearly each type $\textrm{I}$ factor is hyperfinite. Murray and von Neumann in \cite{MR9096} already established the uniqueness of the hyperfinite factor of type $\textrm{II}_{1}$. The bulk of the remaining work in the classification of injective factors was also done in \cite{MR454659}, which established the uniqueness of the injective $\textrm{II}_{\infty}$ factor \cite[Corollary 4]{MR454659} and the uniqueness of the injective type $\textrm{III}_{\lambda}$ factors, for $\lambda \in (0,1)$ \cite[Theorem 8]{MR454659}, proving that each of them is isomorphic to the Powers' factor $R_{\lambda}$ \cite{MR218905}. He also showed that injective type $\textrm{III}_{0}$ factors are classified by their ergodic flow of weights, we shall describe this in further detail in this section. The final step of the classification program was completed by Haagerup in  \cite{MR880070} where he proved that any injective factor of type $\textrm{III}_{1}$ is isomorphic to the Araki-Woods factor $R_{\infty}$ \cite{MR0244773}.  

We now recall the construction of the Connes-Takesaki crossed product \cite{MR480760}. We begin with a faithful, normal state $\phi$ and let $\sigma^{\phi}_{t}$ be the modular automorphism group of $\phi$. Looking at $\sigma^{\phi}_{t}$ as an action of $\mathbb{R}$ on $M$, we construct the crossed product $M \rtimes_{\sigma^{\phi}} \mathbb{R}$ represented on the Hilbert space $L^{2}(\mathcal{H}, \mathbb{R}) = \mathcal{H} \otimes L^{2}(\mathbb{R})$ where $M \subseteq \mathcal{B}(\mathcal{H})$. We recall that for $\xi \in L^{2}(\mathcal{H},\mathbb{R})$ and $x \in M$, the crossed product is generated by the operators $\{ \pi_{\phi}(x)\; | \; x \in M\}$ and $\{u_{t} \; | \; t \in \mathbb{R}\}$ which acts as follows:
\begin{equation}
    (\pi_{\phi}(x)\xi)(t) = \sigma^{\phi}_{-t}(x)\xi(t), \nonumber
\end{equation}
\begin{equation}
    (u_{t}\xi)(s) = \xi(s-t). \nonumber
\end{equation}

Using Connes' Radon-Nikodym cocycle, one can show that this construction is canonical, and it does not depend on the choice of the state. For a factor of type \textrm{III}, we call this crossed product the \textit{canonical continuous core} of $M$ and denote it by $\widetilde{M} = M \rtimes_{\sigma^{\phi}} \mathbb{R}$. In \cite{MR438149}, Takesaki proved the following:

\begin{theorem}(See \cite[Theorem 4.5]{MR438149})
Let $M$ be a factor of type \textrm{III} and $\phi$ be a faithful normal state on $M$, such that $\widetilde{M}$ is the crossed product $M \rtimes_{\sigma^{\phi}} \mathbb{R}$. Then $\widetilde{M}$ is a $\textrm{II}_{\infty}$ von Neumann algebra and there exists a semifinite faithful normal trace $\tau$ on $\widetilde{M}$ and a one-parameter group of automorphisms $(\theta_{s})_{s \in \mathbb{R}}$ such that: 
\begin{enumerate}[(i)]
    \item $\tau \circ \theta_{s} = e^{-s}\tau$,
    \item $M$ is isomorphic to $\widetilde{M} \rtimes_{\theta} \mathbb{R}$
\end{enumerate}
\end{theorem}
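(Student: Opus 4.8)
I would prove this through the \emph{dual weight} and the \emph{dual action} attached to the crossed product $\widetilde{M} = M \rtimes_{\sigma^{\phi}} \mathbb{R}$. Write $\pi : M \hookrightarrow \widetilde{M}$ for the canonical embedding and $(\lambda_{t})_{t \in \mathbb{R}}$ for the implementing one-parameter unitary group, so that $\lambda_{t}\pi(x)\lambda_{t}^{*} = \pi(\sigma^{\phi}_{t}(x))$ and $\widetilde{M} = \bigl(\pi(M) \cup \{\lambda_{t}\}\bigr)''$. The automorphism group $(\theta_{s})_{s \in \mathbb{R}}$ in the statement will be the dual action of $\widehat{\mathbb{R}} \cong \mathbb{R}$, characterised by $\theta_{s}(\pi(x)) = \pi(x)$ and $\theta_{s}(\lambda_{t}) = e^{-ist}\lambda_{t}$. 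The key external input is Haagerup's theory of dual weights: the state $\phi$ has a canonical dual weight $\widehat{\phi}$ on $\widetilde{M}$ which is faithful, normal and semifinite, is invariant under the dual action ($\widehat{\phi} \circ \theta_{s} = \widehat{\phi}$), and whose modular automorphism group is \emph{inner}, namely $\sigma^{\widehat{\phi}}_{t} = \Ad(\lambda_{t})$. By Stone's theorem there is a nonsingular positive self-adjoint operator $h$ affiliated with $\widetilde{M}$ with $\lambda_{t} = h^{it}$; since the $\lambda_{t}$ commute with one another and $\sigma^{\widehat{\phi}}_{t} = \Ad(\lambda_{t})$ fixes each $\lambda_{s}$, the operator $h$ is affiliated with the centralizer $\widetilde{M}_{\widehat{\phi}}$.

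Next I would manufacture the trace by a Radon--Nikodym perturbation. Set $\tau := \widehat{\phi}_{h^{-1}}$, the faithful normal semifinite weight with Connes cocycle $(D\tau : D\widehat{\phi})_{t} = h^{-it}$, which is legitimate by the Pedersen--Takesaki Radon--Nikodym theorem because $h^{-1}$ is affiliated with $\widetilde{M}_{\widehat{\phi}}$. Then $\sigma^{\tau}_{t} = \Ad(h^{-it}) \circ \sigma^{\widehat{\phi}}_{t} = \Ad(h^{-it}) \circ \Ad(h^{it}) = \id$, so $\tau$ is a faithful normal semifinite \emph{trace}; in particular $\widetilde{M}$ is semifinite. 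It is properly infinite because $1_{\widetilde{M}} = \pi(1_{M})$ is the image of the properly infinite projection $1_{M}$ of the type $\textrm{III}$ factor $M$, and the partial isometries witnessing this lie in $M \subseteq \widetilde{M}$. Finally $\widetilde{M}$ has no type $\textrm{I}$ direct summand, which is precisely the point at which one uses that $M$ is of type $\textrm{III}$ rather than merely properly infinite; this is part of the standard structure theory and I would invoke it directly. Hence $\widetilde{M}$ is of type $\textrm{II}_{\infty}$.

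The scaling relation (i) is then a short computation. From $\theta_{s}(h^{it}) = \theta_{s}(\lambda_{t}) = e^{-ist}\lambda_{t} = (e^{-s}h)^{it}$ we get $\theta_{s}(h) = e^{-s}h$, hence $\theta_{-s}(h^{-1/2}) = e^{-s/2}h^{-1/2}$, and using $\widehat{\phi} \circ \theta_{s} = \widehat{\phi}$,
\[
\tau(\theta_{s}(x)) = \widehat{\phi}\bigl(h^{-1/2}\theta_{s}(x)h^{-1/2}\bigr) = \widehat{\phi}\bigl(\theta_{-s}(h^{-1/2})\,x\,\theta_{-s}(h^{-1/2})\bigr) = \widehat{\phi}\bigl(e^{-s/2}h^{-1/2}\,x\,e^{-s/2}h^{-1/2}\bigr) = e^{-s}\tau(x)
\]
for all $x \in \widetilde{M}_{+}$, i.e. $\tau \circ \theta_{s} = e^{-s}\tau$. (A different normalization of the dual-action pairing only changes $s \mapsto -s$ here, and one picks the convention giving the stated sign.) For (ii) I would invoke Takesaki's crossed-product duality theorem applied to the action $\sigma^{\phi}$ of $\mathbb{R}$ on $M$, which yields a canonical isomorphism
\[
\widetilde{M} \rtimes_{\theta} \mathbb{R} = \bigl(M \rtimes_{\sigma^{\phi}} \mathbb{R}\bigr) \rtimes_{\widehat{\sigma^{\phi}}} \widehat{\mathbb{R}} \;\cong\; M \,\overline{\otimes}\, \mathcal{B}\bigl(L^{2}(\mathbb{R})\bigr).
\]
Since $M$ is a type $\textrm{III}$ factor it is properly infinite, so $M \cong M \,\overline{\otimes}\, \mathcal{B}(\ell^{2}) \cong M \,\overline{\otimes}\, \mathcal{B}(L^{2}(\mathbb{R}))$, whence $M \cong \widetilde{M} \rtimes_{\theta} \mathbb{R}$, which is (ii).

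The computational steps above (that $\tau$ is faithful, normal and semifinite, and that the Connes-cocycle and perturbation identities hold as used) are routine; the substantive ingredients, which I would quote rather than reprove, are Haagerup's dual weight theory (existence of $\widehat{\phi}$, the formula $\sigma^{\widehat{\phi}}_{t} = \Ad(\lambda_{t})$, and $\theta$-invariance) and Takesaki's duality theorem. The one point that genuinely requires care beyond citation is the absence of a type $\textrm{I}$ direct summand in $\widetilde{M}$, for which one really has to use that $M$ is of type $\textrm{III}$; I expect that, rather than any of the displayed computations, to be the main obstacle in a self-contained write-up.
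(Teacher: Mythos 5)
The paper does not prove this statement at all: it is quoted verbatim as background, with a citation to Takesaki's original paper \cite{MR438149}, so there is no in-paper proof to compare against. Your argument is correct and is essentially the standard proof from that reference --- the dual weight $\widehat{\phi}$ with $\sigma^{\widehat{\phi}}_{t} = \Ad(\lambda_{t})$, the Radon--Nikodym perturbation by $h^{-1}$ to produce the trace and the scaling identity $\tau \circ \theta_{s} = e^{-s}\tau$, and Takesaki duality plus proper infiniteness of $M$ for part (ii) --- with the one genuinely type-III-specific point (no type $\textrm{I}$ summand in $\widetilde{M}$) correctly identified as the step needing the structure theory rather than a computation.
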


By the Connes-Takesaki relative commutant theorem \cite{MR480760}, we know that $C_{M} := \mathcal{Z}(\widetilde{M}) = \widetilde{M}\cap M'$ . As in \cite{MR480760} we denote for a type \textrm{III} factor $M$, the pair $(C_{M}, \theta|_{C_{M}})$ as the \textit{smooth flow of weights} on $M$, and the quartet $(\widetilde{M}, \theta, \mathbb{R}, \tau)$ as the \textit{non-commutative flow of weights} on $M$. Two important properties of the non-commutative flow of weights are:
\begin{enumerate}
    \item The fixed point algebra $\widetilde{M}^{\theta} = M$. 
    \item If $M$ is a factor, $\theta_{s}$ is ergodic on $C_{M}$ for all $s \in \mathbb{R}$. 
\end{enumerate}
 Writing $C_{M} = L^{\infty}(X)$, $\theta_{s}|_{C_{M}}$ can be written as a point-map $F_{s}$ on $X$. We shall call $(X,F_{s})$ the \textit{flow of weights} in what follows. In terms of Connes' classification of type \textrm{III} factors \cite{MR341115},  the flow of weights can be interpreted as follows: 
\begin{enumerate}
    \item $M$ is of type $\textrm{III}_{1}$ if $X$ is a singleton and the flow is trivial.
    \item $M$ is of type $\textrm{III}_{\lambda}$, for $\lambda \in (0,1)$ if $F_{s}$ is periodic with period $-2\pi \log(\lambda)$. 
    \item $M$ is of type $\textrm{III}_{0}$ otherwise.
\end{enumerate}

We define the right notions of isomorphisms of flows and weak equivalences of automorphisms before stating the final classification in the type $\textrm{III}_{0}$ case due to Krieger \cite{MR415341} and Connes \cite{MR454659}.
\begin{definition}
    1. A one-to-one bimeasurable map of a Lebesgue measure space $(X,\mathcal{B},\mu)$ onto a Lebesgue measure space $(X', \mathcal{B}', \mu')$ that carries $\mu$-null sets to $\mu'$-null sets will be called an \textit{isomorphism}. An isomorphism $T:(X,\mathcal{B},\mu) \rightarrow (X,\mathcal{B},\mu)$ will be called an \textit{automorphism} of the measure space.
    
    2. Given two countable group actions $G \curvearrowright (X,\mu)$ and $G' \curvearrowright (X', \mu')$, they are called orbit equivalent and denoted $G \sim_{w} G'$ if there exists an isomorphism $U:(X,\mu) \rightarrow (X', \mu')$ such that for a.e. $x \in X$, we have $U(G \cdot x) = G'\cdot U(x)$. Given a measure space $(X,\mu)$, two automorphisms $T$ and $T'$ are called orbit (or weakly) equivalent and denoted $T \sim_{w} T'$ if they generate weakly equivalent groups of automorphisms. 
\end{definition}

\begin{definition}
    Two ergodic flows $F_{s}$ and $F'_{s}$ on measure spaces $(X,\mu)$ and $(X', \mu')$ respectively are called \textit{isomorphic} if there is an isomorphism $T: X \rightarrow X'$ such that for all $s\in \mathbb{R}$ and for a.e. $x \in X$, $F'_{s}\circ T(x) = T \circ F_{s}(x)$. 
\end{definition}

It was shown in \cite{MR415341} and \cite{MR454659} that any two injective factors of type $\textrm{III}_{0}$ are isomorphic if and only if their associated ergodic flow of weights are isomorphic.

Since we shall be considering subalgebras of injective factors in this paper, we are going to use the central decomposition extensively. Given a separable Hilbert space $\mathcal{H}$, we denote the set of all von Neumann algebras acting on $\mathcal{H}$ by $\vn(\mathcal{H})$. This space has a certain natural topological structure. This goes back to the works of Effros \cite{MR192360} and Marachel \cite{MR317063} and is defined as the weakest topology on $\vn(\mathcal{H})$ that makes the maps $M \mapsto ||\phi|_{M}||$ continuous for every $\phi \in \mathcal{B}(\mathcal{H})_{*}$. The corresponding Borel structure is known as the Effros-Borel structure on $\vn(\mathcal{H})$. The works of Effros \cite{MR185456}, \cite{MR192360} and Marachel \cite{MR317063} and subsequent works by Nielsen \cite{MR333750}, Sutherland \cite{MR461161} and Woods \cite{MR361817} concluded that the factors of type $\textrm{I}_{n}, n \in \mathbb{N}$, $\textrm{I}_{\infty}$, $\textrm{II}_{1}$, $\textrm{II}_{\infty}$, $\textrm{III}_{\lambda}, \lambda \in [0,1]$ are all Borel sets with respect to this structure. By \cite[Theorem 2.2]{MR461161}, given a Borel set $B \subset [0,1]$, the set of factors $\{M \in \vn(\mathcal{H}) \; |\; M \text{ is of type } \textrm{III}_{\lambda} \text{ where } \lambda \in B \}$ is Borel. Suppose $M \subset \mathcal{B}(\mathcal{H})$ is an injective factor of type $\textrm{III}$ and suppose $B \subset M$ is a regular subalgebra. Let $\mathcal{Z}(B) = L^{\infty}(X,\mu)$ for some standard probability space. Then we have an ergodic central decomposition of $B$ into factors, namely $B = \int^{\oplus}_{x \in X} B_{x}$ with $B_{x} \subset \mathcal{B}(\mathcal{H}_{x})$, such that $\mathcal{H} = \int_{x \in X}^{\oplus} \mathcal{H}_{x}$ and the map $X \ni x \mapsto B_{x} \in \vn(\mathcal{H}_{x})$ is a Borel map with respect to the Effros structure.

\cite[Proposition 6.5]{MR454659} gives us immediately that if $B$ has central decomposition $B = \int^{\oplus}_{x\in X} B_{x}$ where $\mathcal{Z}(B) = L^{\infty}(X,\mu)$, then $B_{x}$ is an injective factor for a.e $x \in X$. We now prove the following lemma that gives a necessary condition for a direct integral of factors to be non-constant. 

\begin{lemma}
\label{constanttype}
Suppose $M$ is an injective factor and $B \subset M$ be a regular subalgebra satisfying $\mathcal{Z}(B) = B' \cap M$ with a conditional expectation $E:M \rightarrow B$ and central decomposition $B = \int^{\oplus}_{x\in X} B_{x}$. Then the measurable field $(B_{x})_{x\in X}$ can be non-constant only if $B_{x}$ is of type $\textrm{III}_{0}$ for a.e. $x \in X$.   
\end{lemma}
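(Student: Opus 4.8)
The plan is to exploit the rigidity that comes from $B$ being a \emph{regular} subalgebra of a \emph{factor} $M$ together with the relative commutant condition $\mathcal{Z}(B) = B' \cap M$. The key point is that the normalizer $\mathcal{N}_M(B)$ acts on $\mathcal{Z}(B) = L^\infty(X,\mu)$, and because $M$ is a factor this action must be ergodic: if it had a nontrivial invariant set, the corresponding central projection of $B$ would be fixed by every element of $\mathcal{N}_M(B)$ and hence central in $M$, contradicting factoriality. So first I would record that $\mathcal{N}_M(B)$, and in fact the associated discrete measured groupoid $\mathcal{G} = \mathcal{G}_{B\subset M}$, acts ergodically on $(X,\mu)$.

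Next I would show that the type of $B_x$ is an invariant of this ergodic action. Conjugation by a normalizing unitary $u \in \mathcal{N}_M(B)$ implements, via the central decomposition, a measurable field of isomorphisms $B_x \to B_{g\cdot x}$ for the corresponding partial transformation $g$ of $X$; in particular $B_x$ and $B_{g\cdot x}$ are isomorphic factors for a.e.\ $x$. Using the Borel structure on $\vn(\mathcal{H})$ recalled before the lemma — specifically the fact (from \cite[Theorem 2.2]{MR461161}) that for any Borel set $S\subseteq[0,1]$ the set of $\mathrm{III}_\lambda$ factors with $\lambda\in S$ is Borel, and similarly the types $\mathrm{I}_n, \mathrm{I}_\infty, \mathrm{II}_1, \mathrm{II}_\infty$ are Borel — the map $x \mapsto \type(B_x)$ is a Borel function on $X$ valued in the (standard Borel) space of types. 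This function is invariant under the groupoid action, so by ergodicity it is a.e.\ constant on each "type stratum"; more precisely, the preimage of each Borel set of types is a $\mathcal{G}$-invariant Borel subset of $X$, hence null or conull. Therefore $\type(B_x)$ is a.e.\ equal to a fixed type $\tau_0$.

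It remains to see that if $\tau_0$ is \emph{not} $\mathrm{III}_0$, then the field is actually constant. For the types $\mathrm{I}_n$, $\mathrm{I}_\infty$, $\mathrm{II}_1$, $\mathrm{II}_\infty$, and $\mathrm{III}_1$, there is up to isomorphism a \emph{unique} injective factor of that type (Murray--von Neumann and Connes--Haagerup, as recalled in the preliminaries), and for $\mathrm{III}_\lambda$ with $\lambda\in(0,1)$ the unique injective factor is the Powers factor $R_\lambda$; since a.e.\ $B_x$ is injective by \cite[Proposition 6.5]{MR454659}, all the $B_x$ are mutually isomorphic. A measurable field of mutually isomorphic injective factors of one of these types over a standard probability space is isomorphic to the constant field — this is a standard measurable-selection fact, using that the relevant isomorphism groups act with a Borel cross-section (equivalently, that these isomorphism classes are single points, hence "constant", in the Effros--Borel structure); I would cite the classification works of Sutherland \cite{MR461161} and the Effros--Borel machinery for this. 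Hence in all non-$\mathrm{III}_0$ cases $(B_x)_{x\in X}$ is constant, which is the contrapositive of the claim.

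The main obstacle is the last step: making precise and rigorous the passage from "a.e.\ $B_x$ has the same isomorphism type $\tau_0 \neq \mathrm{III}_0$" to "$(B_x)_x$ is a constant field". For the $\mathrm{III}_1$ and semifinite cases this is essentially immediate from uniqueness of the injective factor plus a measurable choice of isomorphisms, but one must check that such a choice can be made Borel in $x$; the cleanest route is to invoke the results guaranteeing that the corresponding subsets of $\vn(\mathcal{H})$ are standard Borel and that the unitary/automorphism group acts smoothly, so that a Borel transversal exists and trivializes the field. I would phrase this carefully, reducing to the statement that the isomorphism relation on injective factors of a fixed non-$\mathrm{III}_0$ type is trivial, which is exactly where Connes's classification (and its failure to trivialize in the $\mathrm{III}_0$ case, where the flow of weights is a genuine modulus) enters.
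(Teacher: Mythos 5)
Your proposal is correct and follows essentially the same route as the paper: factoriality of $M$ forces the equivalence relation induced on $X$ by the normalizer (equivalently, by the associated groupoid) to be ergodic, the map $x \mapsto \type(B_{x})$ is Borel by \cite[Theorem 2.2]{MR461161} and invariant under this relation, hence a.e.\ constant. The paper's proof in fact stops at that point, leaving implicit the final passage from constancy of the type to constancy of the field via uniqueness of the injective factor of each non-$\textrm{III}_{0}$ type, which you spell out and correctly identify as the only delicate step.
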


\begin{proof}
Since the inclusion is regular, we have a discrete measured amenable groupoid $\mathcal{G}$ with a cocycle action on $B$ such that $B \rtimes \mathcal{G}$ is isomorphic to $M$. Let $\mathcal{R}$ be the countable amenable equivalence relation on $(X,\mu)$ given by $x \sim y$ if and only if there exists $g \in \mathcal{G}$ with $s(g) = x$ and $t(g) = y$. Since $M$ is a factor, $\mathcal{R}$ is ergodic. Let $E$ denote the set $\{\textrm{I}_{n}, n \in \mathbb{N}\} \cup \{\textrm{I}_{\infty}, \textrm{II}_{1}, \textrm{II}_{\infty}\}  \cup \{\textrm{III}_{\lambda}, \lambda \in [0,1]\}$. Let $\mathcal{F}(\mathcal{H}) \subset \vn(\mathcal{H})$ denote the set of factors acting on a separable Hilbert space $\mathcal{H}$. By \cite[Theorem 2.2]{MR461161}, the map $\type: \mathcal{F}(\mathcal{H}) \rightarrow E$ is Borel. Now we consider the map from $X$ to $E$ given by $x \mapsto \type(B_{x})$. Clearly this is $\mathcal{R}$-invariant as when $(x,y) \in \mathcal{R}$, we have that $B_{x} \cong B_{y}$ and hence they clearly have the same type. Hence this map is essentially constant, as required.    
\end{proof}

\begin{definition}
\label{defconstanttype}
    A regular von Neumann subalgebra $B \subset M$ of an injective factor $M$ satisfying $\mathcal{Z}(B) = B'\cap M$ with central decomposition $B = \int^{\oplus}_{x \in X} B_{x}$ is said to be of type $k$ with $k \in \{\textrm{I}_{n} \text{ for } n\in \mathbb{N}, \textrm{I}_{\infty}, \textrm{II}_{1}, \textrm{II}_{\infty}, \textrm{III}\}$ if for a.e. $x \in X$, $B_{x}$ is of type $k$.    
\end{definition}

\subsection{Amenable groupoids and cocycle actions}

The concept of discrete measured groupoids goes back to \cite{MR165034}. All definitions in this section are taken from \cite{MR1855241}. 

\begin{definition} 1. A \textit{groupoid} is a collection of morphisms $\mathcal{G}$ between a set of objects $\mathcal{G}^{(0)}$ such that every morphism is invertible along with the following four canonical maps: 
\begin{itemize}
    \item The source map $s: \mathcal{G} \rightarrow \mathcal{G}^{(0)}$ sending $(f:x \rightarrow y)$ to $x$. 
    \item The target map $t: \mathcal{G} \rightarrow \mathcal{G}^{(0)}$ sending $(f:x \rightarrow y)$ to $y$.
    \item The inverse map $i: \mathcal{G} \rightarrow \mathcal{G}$ sending $(f:x \rightarrow y)$ to $(f^{-1}:y \rightarrow x)$. 
    \item The composition map $\circ : \mathcal{G}^{(2)} \rightarrow G$ sending $(f,g)$ to $g \circ f$ where $\mathcal{G}^{(2)} = \{(f,g) | f,g \in \mathcal{G}, s(g) = t(f)\}$. 
\end{itemize}
We shall often denote $g \circ f$ as $gf$ for convenience. The unit space $\mathcal{G}^{(0)}$ can be seen as a subset of $\mathcal{G}$ via identity morphisms.

2. A \textit{discrete measurable groupoid} is a groupoid $\mathcal{G}$ equipped with a structure of a standard Borel space such that the inverse and composition maps are Borel, and $s^{-1}(x)$ (and hence also $t^{-1}(x)$) is countable for every $x \in \mathcal{G}$.  
\end{definition}

We note here some important properties of discrete measurable groupoids that are fairly straightforward: 

\begin{itemize}
    \item The source and target maps of a discrete measurable groupoid are measurable and $\mathcal{G}^{(0)} \subset \mathcal{G}$ is a Borel subset.
    \item  Let $\mu$ be a probability measure on the set of objects $\mathcal{G}^{(0)}$ of a discrete measurable groupoid $\mathcal{G}$. Then for any measurable subset $A \subset \mathcal{G}$, the function 
\begin{equation}
    \mathcal{G}^{(0)} \rightarrow \mathbb{C}, \; x \rightarrow \# (s^{-1}(x) \cap A) \nonumber
\end{equation}
is measurable and the measure $\mu_{s}$ on $\mathcal{G}$ defined by 
\begin{equation}
    \mu_{s}(A) = \int_{\mathcal{G}^{(0)}} \#(s^{-1}(x) \cap A)d\mu(x)  \nonumber
\end{equation}
is $\sigma$-finite. We call it the \textit{left counting measure} of $\mu$. The same statement holds if we replace $s$ by $t$ and the corresponding measure $\mu_{t}$ is called the \textit{right counting measure} of $\mu$.

\item The following conditions on $\mu$ (as above) are equivalent: 
\begin{enumerate}
    \item $\mu_{s} \sim \mu_{t}$.
    \item $i_{*}(\mu_{s}) \sim \mu_{s}$.
    \item  For every Borel subset $\mathcal{E} \subset \mathcal{G}$ such that $s|_{\mathcal{E}}$ and $t|_{\mathcal{E}}$ are injective, we have that  $\mu(s(\mathcal{E})) = 0$ if and only if $\mu(t(\mathcal{E})) = 0$.
\end{enumerate} 
\end{itemize} 

\begin{definition}
 A probability measure $\mu$ on $\mathcal{G}^{(0)}$ is called \textit{quasi-invariant} if it satisfies one of the (and hence all) conditions of Lemma 1.5. A \textit{discrete measured groupoid} is a discrete measurable groupoid $\mathcal{G}$ together with a quasi-invariant measure $\mu$ on $\mathcal{G}^{(0)}$. Let $\mathcal{G}$ be a discrete measured groupoid with quasi-invariant measure $\mu$ and $A \subset \mathcal{G}^{(0)}$ be a Borel subset. Then $\mathcal{G}|_{A} = s^{-1}(A) \cap t^{-1}(A)$ with the normalized measure $\frac{1}{\mu(A)}\mu|_{A}$ is a discrete measured groupoid called the \textit{restriction} of $\mathcal{G}$ to $A$.
\end{definition}  

A functor between groupoids is called a \textit{groupoid homomorphism}. A groupoid homomorphism with an inverse is called an \textit{isomorphism}. A measurable groupoid isomorphism between discrete measurable groupoids has a measurable inverse. A measurable homomorphism $f: \mathcal{G} \rightarrow \mathcal{H}$ between discrete measured groupoids with quasi-invariant measures $\mu_{\mathcal{G}^{(0)}}$ and $\mu_{\mathcal{H}^{(0)}}$ on $\mathcal{G}^{(0)}$ and $\mathcal{H}^{(0)}$ is called \textit{non-singular} if $f_{*}\mu_{\mathcal{G}^{(0)}} \sim \mu_{\mathcal{H}^{(0)}}$. An \textit{isomorphism of discrete measured groupoids} is a non-singular groupoid isomorphism.

A discrete measured groupoid $\mathcal{G}$ with unit space $\mathcal{G}^{(0)} = X$ has two important associated objects:
\begin{itemize}
    \item The associated \textit{isotropy bundle} $\Gamma = (\Gamma_{x})_{x \in X}$ where $\Gamma_{x} = \{g \in \mathcal{G}, \; s(g) = t(g) = x\}$ is called an \textit{isotropy group} of $x$.
    \item The associated nonsingular equivalence relation $\mathcal{R}$ on $(X,\mu)$ given by $x \sim y$ if and only if there exists an element $g \in \mathcal{G}$ with $s(g) = x$ and $t(g) = y$. 
\end{itemize}
 
We recall here from Section 3.2 in \cite{MR1855241} the various equivalent notions of amenability of a discrete measured groupoid. We also recall from \cite[Section 5.3]{MR1855241}, that amenability of a discrete measured groupoid is equivalent to amenability of the associated equivalence relation and amenability of the isotropy groups $\Gamma_{x}$ for a.e. $x \in X$. When the associated equivalence relation is ergodic, we say that the groupoid is \textit{ergodic}.  

For a discrete measured groupoid $\mathcal{G}$, we define $\mathcal{G}^{(3)} = \{(g,h,k) \; | \; t(g) = s(h), t(h) = s(k)\}$. In the rest of the subsection, we define `cocycle actions' and `crossed products' exactly in the same way as in the case of p.m.p. groupoids and trace-preserving actions in \cite[Section 2]{MR4124434}. The notation used in this section is very similar to the notation used in \cite{MR4124434}.    

\begin{definition}(See \cite[Definition 2.2]{MR4124434})
A \textit{cocycle action} $(\alpha, u)$ of a discrete measured groupoid $\mathcal{G}$ with unit space $\mathcal{G}^{(0)} = X$ on a measurable field $(B_{x})_{x \in X}$ of von Neumann algebras is given by a measurable field of $*$-isomorphisms between the source and the targets $\mathcal{G} \ni g \mapsto \alpha_{g}: B_{s(g)} \rightarrow B_{t(g)}$ and a 2-cocycle, i.e. a measurable field of unitaries $\mathcal{G}^{(2)} \ni (g,h) \mapsto u(g,h) \in \mathcal{U}(B_{t(g)})$ satisfying the following conditions:
\begin{itemize}
\item  $\alpha_{g} \circ \alpha_{h} = \Ad(u(g,h)) \circ \alpha_{gh}$ for all $(g,h) \in \mathcal{G}^{(2)}$. 
\item $\alpha_{g}(u(h,k))u(g,hk) = u(g,h)u(gh,k)$ for all $(g,h,k) \in \mathcal{G}^{(3)}$.
\item $\alpha_{g} = \id$ when $g \in \mathcal{G}^{(0)}$.
\item $u(g,h) = 1$ when $g \in \mathcal{G}^{(0)}$ or $h \in \mathcal{G}^{(0)}$.
\end{itemize}
\end{definition}

Recall that an automorphism $\alpha \in \Aut(B)$ is outer if there is no unitary element $v \in \mathcal{U}(B)$ such that $\alpha = \Ad(v)$. We shall call an automorphism $\alpha \in \Aut(B)$ \textit{free} or \textit{properly outer} as in \cite{MR4124434}, if for every nonzero element $v \in B$ (not necessarily a unitary anymore), there exists $x \in B$ such that $vx \neq\alpha(x)v$. It is easy to check that an automorphism is outer if and only if it is properly outer when $B$ is a factor. In the context of cocycle actions, we call $(\alpha,u)$ \textit{free} if $\alpha_{g}$ is free for a.e. $x \in X$ and for all $g \neq \id$ in the isotropy groups $\Gamma_{x}$. We repeat the definition of cocycle conjugacy as in \cite[Section 2]{MR4124434}.

\begin{definition}
\label{DefCocycleConjugacy}
\begin{enumerate}[1.]
    \item Two cocycle actions $(\alpha,u)$ and $(\beta,v)$ of a countable group $G$ on von Neumann algebras $B$ and $D$ respectively are said to be cocycle conjugate if there exists a *-isomorphism $\theta:B \rightarrow D$ and unitaries $w_{g} \in \mathcal{U}(D)$ such that
    \begin{itemize}
        \item $\theta \circ \alpha_{g} \circ \theta^{-1}= \Ad(w_{g})\circ \beta_{g}$ for all $g \in G$.
        \item $\theta(u(g,h)) = w_{g}\beta_{g}(w_{h})v(g,h)w_{gh}^{*}$ for all $g,h \in G$.
    \end{itemize}
    
    \item Two cocycle actions $(\alpha,u)$ and $(\beta,v)$ of a discrete measured groupoid $\mathcal{G}$ on fields of von Neumann algebras $(B_{x})_{x \in X}$ and $(D_{x})_{x \in X}$ respectively are said to be \textit{cocycle conjugate} if there exists a measurable field of $*$-isomorphisms $x \mapsto \theta_{x}: B_{x} \rightarrow D_{x}$ and a measurable field of unitaries $g \mapsto w_{g} \in \mathcal{U}(D_{t(g)})$ that satisfies:
 \begin{itemize}
     \item  $\theta_{t(g)} \circ \alpha_{g} \circ \theta^{-1}_{s(g)} = \Ad(w_{g}) \circ \beta_{g}$ for all $g \in \mathcal{G}$.
     \item  $\theta_{t(g)}(u(g,h)) = w_{g}\beta_{g}(w_{h})v(g,h)w^{*}_{gh}$ for all $(g,h) \in \mathcal{G}^{(2)}$.
 \end{itemize}
\end{enumerate}

\end{definition}
One important reason for classifying actions up to cocycle conjugacy, is that it is the right notion of equivalence between  actions that makes the corresponding crossed products von Neumann algebras isomorphic. The bijection between regular inclusions and crossed products can be seen in various ways, for example in \cite{MR4266136} which uses inverse semigroups. We shall use the algebraic correspondence between inverse semigroups and discrete measured groupoids (c.f. \cite{MR1724106}). We shall discuss this in greater detail now. 

Let $\mathcal{G}$ be a discrete measured groupoid with a quasi-invariant probability measure $\mu$ on $\mathcal{G}^{(0)}$. We define the \textit{full pseudogroup} $[[\mathcal{G}]]$ of $\mathcal{G}$ as the set of all Borel subsets $\mathcal{U} \subset \mathcal{G}$ such that the source and the target maps restricted the  $\mathcal{U}$ are injective. We also identify $\mathcal{U}$ and $\mathcal{U}'$ if the symmetric difference is a set of measure zero. The composition of two such subsets $\mathcal{U}$ and $\mathcal{V}$ in $[[\mathcal{G}]]$ is defined as 
\begin{equation}
    \mathcal{U} \cdot \mathcal{V} = \{gh \; | \; g \in \mathcal{U}, h \in \mathcal{V}, s(g) = t(h)\} \nonumber
\end{equation}
We note that $[[\mathcal{G}]]$ is an inverse semigroup with the unique inverse of $\mathcal{U}$ given by $\mathcal{U}^{-1} = \{ g^{-1} \; | \; g \in \mathcal{U}\}$. Clearly, $\mathcal{U}\mathcal{U}^{-1}\mathcal{U} = \mathcal{U}$ and $\mathcal{U}^{-1}\mathcal{U}\mathcal{U}^{-1} = \mathcal{U}^{-1}$. Indeed $\mathcal{U}^{-1}\mathcal{U}$ consists of the identity morphisms between the elements of $s|_{\mathcal{U}}$ and $\mathcal{U}\mathcal{U}^{-1}$ consists of the identity morphisms between elements of $t|_{\mathcal{U}}$.

Suppose now that $(\alpha,u)$ is a cocycle action of $\mathcal{G}$ on the field of von Neumann algebras $B = (B_{x})_{x \in X}$, with $\mathcal{G}^{(0)}= X$. We now describe the \textit{cocycle crossed product} which produces, for every such cocycle action $(\alpha,u)$, a von Neumann algebra $M$ such that $B \subset M$ is regular and comes with faithful normal conditional expectation $E:M \rightarrow B$. We define the cocycle crossed product $M$ from the full pseudogroup $[[\mathcal{G}]]$ of the groupoid. For every $\mathcal{U} \in [[\mathcal{G}]]$, we define the Borel maps $\phi_{\mathcal{U}}: s(\mathcal{U}) \rightarrow t(\mathcal{U})$ by $\phi_{\mathcal{U}}(s(g)) = t(g)$ for all $g \in \mathcal{U}$. Writing 
\begin{equation}
    B = \int_{X}^{\oplus }B_{x} \; d \mu(x), \nonumber
\end{equation}
the cocycle action $(\alpha,u)$ of $\mathcal{G}$ on the field $(B_{x})_{x \in X}$ induces an action of the inverse semigroup $[[\mathcal{G}]]$ as follows: every subset $\mathcal{U} \in [[\mathcal{G}]]$ gives a $*$-isomorphism $\alpha_{\mathcal{U}}: B1_{s(\mathcal{U})} \rightarrow B1_{t(\mathcal{U})}$, namely   
\begin{equation}
    (\alpha_{\mathcal{U}}(b))(t(g)) = \alpha_{g}(b(s(g))) \text{ for all } b \in B1_{s(\mathcal{U})} \text{ and } g \in \mathcal{U}. \nonumber
\end{equation}
The partial isometries $u(\mathcal{U})$ have source projections $u(\mathcal{U})^{*}u(\mathcal{U}) = 1_{s(\mathcal{U})}$, range projections $u(\mathcal{U})u(\mathcal{U})^{*} = 1_{t(\mathcal{U})}$ and expectation $E(u(\mathcal{U}))= 1_{\mathcal{U}\cap \mathcal{G}^{(0)}}$. For an element $b \in B1_{s(\mathcal{U})}$, these partial isometries satisfy $u(\mathcal{U})bu(\mathcal{U})^{*} = \alpha_{\mathcal{U}}(b)$.

Similarly, we have a 2-cocycle for the inverse semigroup action as follows: for every pair of elements $\mathcal{U}, \mathcal{V} \in [[\mathcal{G}]]$, there is a unitary element $u(\mathcal{U},\mathcal{V}) \in B1_{t(\mathcal{U}.\mathcal{V})}$ given by
\begin{equation}
    u(\mathcal{U},\mathcal{V})_{t(gh)} = u(g,h) \text{ for all } g \in \mathcal{U} \text{ and } h \in \mathcal{V} \text{ with } s(g) = t(h). \nonumber
\end{equation}
For two elements $\mathcal{U}, \mathcal{V} \in [[\mathcal{G}]]$, we have that $u(\mathcal{U})u(\mathcal{V}) = u(\mathcal{U},\mathcal{V})u(\mathcal{U}.\mathcal{V})$. We consider the von Neumann algebra generated by $B$ and partial isometries $u(\mathcal{U})$ for all such sets $\mathcal{U} \in [[\mathcal{G}]]$ and call it the `cocycle crossed product'.  

We claim that $B$ is a regular subalgebra of $M$. To prove our claim, let $v$ be a partial isometry in $M$ with $v^{*}v$ and $vv^{*}$ in $\mathcal{Z}(B)$ and satisfying $vBv^{*} \subset B$ and $v^{*}Bv \subset B$. Since for all $\mathcal{U} \subset [[\mathcal{G}]]$, the partial isometry $u(\mathcal{U})$ is of this form, it is enough to prove that $v$ is of the form $uq$ with $u \in \mathcal{N}_{M}(B)$ and $q = v^{*}v \in \mathcal{Z}(B)$. Now let $\{v_{i}, i \in \mathbb{N}\}$ be a maximal family of partial isometries in $M$ such that the source projections $\{p_{i} = v_{i}^{*}v_{i}\}$ are pairwise orthogonal and belong to $\mathcal{Z}(B)$, the range projections $\{q_{i} = v_{i}v_{i}^{*}\}$ are pairwise orthogonal and belong to $\mathcal{Z}(B)$ and $v_{1} = v$. Now, by looking at the ergodic decomposition of $M$, we can see that the projections $p_{0} = 1 - \sum_{i}p_{i}$ and $q_{0} = 1 - \sum_{i}q_{i}$ are equivalent in $M$ since $1 - p_{0}$ and $1 - q_{0}$ are equivalent in $M$. Now assuming $p_{0}$ and $q_{0}$ are not equal to zero, we can construct in exactly the same way as \cite[Lemma 12.1.2]{ADP_An_introduction} a partial isometry $w \in M$ with $w^{*}w$ and $ww^{*}$ in $\mathcal{Z}(B)$ such that $wBw^{*} \subset B$, $w^{*}Bw \subset B$, and $pw = w = wq$. This then contradicts the maximality of the family that we chose. Hence $p_{0} = q_{0} = 0$ and we get a unitary $u = \sum_{i}v_{i}$ in $\mathcal{N}_{M}(B)$ such that $uq = v$, as required. We can check that the relative commutant $B'\cap M$ is isomorphic to $L^{\infty}(\mathcal{G}^{(0)})$ if and only if $B_{x}$ is a factor a.e. and the cocycle action is free.

To see the converse, we begin with an inclusion $B \subset M$, where $B$ is regular, with a faithful and normal conditional expectation $E: M \rightarrow B$ such that $B$ satisfies the relative commutant condition. We consider the standard probability space $(X, \mu)$ such that the centre $\mathcal{Z}(B)$ is isomorphic to $L^{\infty}(X, \mu)$, and the ergodic decomposition $B = \int_{x \in X}^{\oplus}B_{x}$. Let $v \in M$ be the partial isometries such that $vBv^{*} = Bq$ and the source and range projections $v^{*}v$ and $vv^{*}$ lie in $\mathcal{Z}(B)$, and let $\mathcal{P}$ be the set of all such partial isometries. We then equip $\mathcal{P}$ with an equivalence relation, namely $v \sim w$ for two such partial isometries if and only if $v \in Bw$. Now we consider the quotient $\mathcal{I} = \mathcal{P}/ \sim$ and note that there is a natural inverse semigroup structure on $\mathcal{I}$. Using the algebraic correspondence between groupoids and inverse semigroups, we can uniquely construct a discrete measured groupoid $\mathcal{G}$ with $\mathcal{G}^{(0)} = X$ such that the full pseudogroup $[[\mathcal{G}]]$ corresponds with $\mathcal{I}$. Now, note that $\mathcal{P}$ acts on $B = \int_{x \in X}B_{x} \; d \mu$ as follows: for any partial isometry $v \in \mathcal{P}$ with $e_{v} = v^{*}v$ and $f_{v} = vv^{*}$, we define $\alpha_{v}: Be_{v} \rightarrow Bf_{v}$ by $\alpha_{v}(b) = v^{*}bv$. On passing to $\mathcal{I}$, since there is no unique lift $\mathcal{I} \rightarrow \mathcal{P}$, we get a 2-cocycle and a corresponding cocycle action of the inverse semigroup $\mathcal{I}$ on $B$. This in turn gives a cocycle action of the groupoid $\mathcal{G}$ on $(B_{x})_{x \in X}$ and by construction, we have that $B \subset M$ is isomorphic with the crossed product inclusion $B \subset (B_{x})_{x \in X} \rtimes \mathcal{G}$. There is an isomorphism between two such crossed product inclusions if and only if the associated discrete measured groupoids are isomorphic and via this isomorphism, their corresponding cocycle actions are cocycle conjugate. It can be checked that $M = B \rtimes_{(\alpha,u)} \mathcal{G}$ being a factor is equivalent to $\mathcal{G}$ being ergodic. 

\subsection{Equivalence relations and Cartan subalgebras}
\label{Equivalence relations and Cartan subalgebras}
As a special case of the discussion in the previous section, if the regular subalgebra $B$ in $M$ is in particular abelian, then the corresponding discrete measured groupoid has trivial isotropy groups almost everywhere, i.e., it is a countable equivalence relation. Such subalgebras are called Cartan subalgebras and the crossed product is isomorphic to $M$ as in the previous section and is called the von Neumann algebra of the equivalence relation. This was described first by Feldman and Moore in \cite{MR578730} and \cite{MR578656}. 

\begin{definition}
If $M$ is a von Neumann algebra and $A$ is an abelian subalgebra, we say that $A$ is a \textit{Cartan subalgebra} if $A$ satisfies the following:
\begin{itemize}
    \item $A$ is maximal abelian,
    \item $A$ is regular, i.e. $\mathcal{N}_{M}(A)$ generates $M$,
    \item there exists a faithful normal conditional expectation of $M$ onto $A$.
\end{itemize}
\end{definition}

A \textit{countable equivalence relation} is an equivalence relation $\mathcal{R} \subset X \times X$ on a standard Borel space $X$, which is a Borel subset of $X \times X$ and all equivalence classes are countable. We denote by $\mathcal{R}(x)$ for the equivalence class of $x \in X$. We also define $\mathcal{R}(A) = \cup_{x \in A}\mathcal{R}(x)$. Hence $\mathcal{R}$ is countable when $\mathcal{R}(x)$ is countable for every $x \in X$. A \textit{partial isomorphism} $\phi: A \rightarrow B$ between two Borel subsets of $X$ is a Borel isomorphism from $A$ onto $B$. We denote like in the case of groupoids, the full pseudogroup by $[[\mathcal{R}]]$ as the set of such $\phi$ whose graph is contained in $\mathcal{R}$. The domain of $\phi$ is written as $D(\phi)$ and the range is written as $R(\phi)$.  We denote by $\mathcal{R}(A) = p_{1}^{-1}(p_{2}^{-1}(A)) = p_{2}^{-1}(p_{1}^{-1}(A))$ and call this the \textit{$\mathcal{R}$-saturation of $A$}, where $p_{1}$ and $p_{2}$ are the left and right projections of $\mathcal{R}$ onto $X$. Clearly $\mathcal{R}(A)$ is Borel if and only if $A$ is Borel. Given a $\sigma$-finite measure $\mu$ on $X$, we say that the measure is \textit{quasi-invariant} for $\mathcal{R}$ if it has the property that $\mu(\mathcal{R}(A)) = 0 \iff \mu(A)=0$. In this case we say that $\mathcal{R}$ is \textit{non-singular} with respect to $\mu$.  We say that a Borel subset $A \subset X$ is \textit{invariant (or saturated)} if $\mathcal{R}(A) = A$ upto a null set. The relation $(\mathcal{R},\mu)$ is called \textit{ergodic} if every invariant Borel subset is either null or co-null. 

Suppose now that $\mu$ is a $\sigma$-finite measure on $X$ and $\mathcal{R}$ is a countable non-singular equivalence relation on $X$. We define the $\sigma$-finite measure $\nu_{l}$ on $\mathcal{R}$ by 
\begin{equation}
    \nu_{l}(C) = \int_{X} |C^{x}| \; d\mu(x) \nonumber
\end{equation}
where $C^{x}$ denotes the cardinality of the set $\{(x,y) \in C \; | \; (x,y) \in \mathcal{R} \}$. Similarly we can define the measure $\nu_{r}$ by $C \rightarrow \int_{X}|C_{x}| \; d\mu(x)$ where $C_{x} = \{(y,x) \in C \; | \; (x,y) \in \mathcal{R}\}$. It is easy to see that $\nu_{l}$ and $\nu_{r}$ are mutually absolutely continuous and we define the \textit{Radon-Nikodym 1 cocycle} of $\mathcal{R}$ with respect to $\mu$ as the function $D_{\mu}: \mathcal{R} \rightarrow \mathbb{R}^{+}_{*}$ defined by $D_{\mu}(x,y) = \frac{d\nu_{l}}{d\nu_{r}}(x,y)$. The Radon Nikodym 1 cocycle $D_{\mu}$ is 1 if and only if $\mathcal{R}$ preserves the measure $\mu$. 

We recall now the construction (c.f. \cite{MR578656}) of the von Neumann algebra of an equivalence relation with a 2-cocycle $L(\mathcal{R},c)$. It can be shown that $L(\mathcal{R},c)$ is a factor if and only if $R$ is ergodic. We note that $L^{\infty}(X,\mu) \subset L(\mathcal{R},c)$ is a Cartan subalgebra. The main result in \cite{MR578656} asserts that the converse is true: any Cartan inclusion $A \subset M$ has a unique associated nonsingular countable equivalence relation $\mathcal{R}$ on a Borel space $(X,\mu)$ and a 2-cocycle $c$ such that $A \subset M$ is isomorphic as an inclusion to $L^{\infty}(X,\mu) \subset L(\mathcal{R},c)$. The Cartan subalgebra $A$ is hyperfinite if and only if the equivalence relation $\mathcal{R}$ is hyperfinite, in which case the 2-cocycle vanishes (Theorem 6, \cite{MR578730}).  

Suppose $\mathcal{R}$ is a countable ergodic non-singular equivalence relation on $(X,\mu)$. Now suppose $\mathcal{S}$ is a countable, non-singular subequivalence relation of $\mathcal{R}$. We define the space $\Aut(\mathcal{R})$ as the space of all measure-space automorphisms $\phi \in \Aut(X,\mu)$ such that $(\phi \times \phi)(\mathcal{R}) = \mathcal{R}$. We also define the full group $[\mathcal{R}]$ or $\Int(\mathcal{R})$ of $\mathcal{R}$ as follows: $\phi \in [\mathcal{R}]$ if $\phi \in \Aut(\mathcal{R})$ and $(\phi(x),x) \in \mathcal{R} \; \mu$ a.e. We also denote $\Aut_{\mathcal{R}}(\mathcal{S}) = \Aut(\mathcal{S}) \cap [\mathcal{R}]$. Lastly, the full pseudogroup $[[\mathcal{R}]]$ of $\mathcal{R}$ is defined as the set of Borel isomorphisms $\phi: A \rightarrow B$ between subsets of $X$ such that the graph of $\phi$ is contained in $\mathcal{R}$. 

Now let $M(X, \mu)$ be the set of complex-valued measures that are absolutely continuous with respect to $\mu$. We endow $M(X, \mu)$ with a norm by putting $\|\nu\| = |\nu|(X)$ where $|\nu|$ is the total variation of $\nu$. With respect to this norm, $M(X,\mu)$ is a Banach space. We consider now the map $L^{1}(X,\mu) \rightarrow M(X, \mu)$ given by $g \mapsto \nu_{g}$ where 
\begin{equation}
    \nu_{g}(f) = \int_{X}g(x)f(x)\; d\mu(x) \nonumber
\end{equation}
This gives a Banach space isomorphism between $L^{1}(X, \mu)$ and $M(X,\mu)$. For $\alpha \in \Aut(X, \mu)$ and $g \in L^{1}(X, \mu)$, we define $\alpha_{\mu}(g) \in L^{1}(X, \mu)$ by 
\begin{equation}
    \alpha_{\mu}(g)(x) = g(\alpha^{-1}(x))\frac{d(\mu \circ \alpha^{-1})}{d\mu}(x) \nonumber
\end{equation}
It can be checked that $\alpha_{\mu}$ is an isometry of $L^{1}(X,\mu)$ and that $(\alpha\beta)_{\mu} = \alpha_{\mu}\beta_{\mu}$. Under our identification, $\alpha_{\mu}(g)$ corresponds to $\alpha(\nu_{g}) \coloneqq \nu_{g} \circ \alpha^{-1}$. In \cite{MR617740}, Hamachi and Osikawa introduced the u-topology on $\Aut(\mathcal{R})$. For $\alpha_{n}, \beta \in \Aut(\mathcal{R})$, we say that $\alpha_{n} \rightarrow \beta$ weakly if 
\begin{equation}
    \lim_{n \rightarrow \infty} \|\alpha_{n}(g) - \beta(g) \| = 0 \nonumber
\end{equation}
for all $g \in L^{1}(X, \mu)$. Now we define the metric $d_{\mu}$ on $\Aut(\mathcal{R})$ as $d_{\mu}(\alpha,\beta) \coloneqq \mu(\{x \; | \; \alpha(x) \neq \beta(x)\})$. We say that $\alpha_{n}$ converges to $\beta$ uniformly if $d_{\mu}(\alpha_{n},\beta) \rightarrow 0$. We now say that a sequence $\alpha_{n}$ converges to $\beta$ in the u-topology if $\alpha_{n} \rightarrow \beta$ weakly and $\alpha_{n}\phi\alpha_{n}^{-1} \rightarrow \beta \phi \beta^{-1}$ uniformly for all $\phi \in [\mathcal{R}]$. It can be checked that this topology indeed corresponds to the u-topology on $\Aut(L(\mathcal{R}))$, hence justifying the terminology. It is also shown in \cite{MR617740} that $\Aut(\mathcal{R})$ is a Polish group with respect to the u-topology. We now state the following definition as in \cite{MR1007409}.

\begin{definition}
A subequivalence relation $\mathcal{S} \subset \mathcal{R}$ is called \textit{strongly normal} if the graphs of the elements of $\Aut_{\mathcal{R}}(\mathcal{S})$ generate $\mathcal{R}$. 
\end{definition}

Now suppose $A \subset M$ is a Cartan inclusion isomorphic to $L^{\infty}(X,\mu) \subset L(\mathcal{R},c)$. By \cite[Theorem 1.1]{MR1978219}, for any intermediate von Neumann subalgebra $A\subset B \subset M$, there exists a unique faithful normal conditional expectation $E_{B}: M \rightarrow B$ and $A$ is a Cartan subalgebra of $B$. From this, it follows that intermediate von Neumann subalgebras $A\subset B \subset M$ are in one-to-one correspondence with subequivalence relations $\mathcal{S} \subset \mathcal{R}$ and the bijection is given by considering $B = L(\mathcal{S},u)$. We note that $B'\cap M \subset A' \cap M = A \subset B$ and hence the relative commutant condition $B'\cap M = \mathcal{Z}(B)$ is always satisfied in this situation. An action $\alpha: \Gamma \curvearrowright \mathcal{S}$ of a countable group on a non-singular equivalence relation $\mathcal{S}$ (on a probability space $(X,\mu))$ is called \textit{outer} if for all $g \neq e \in \Gamma$, we have that $(x,\alpha_{g}(x)) \notin \mathcal{S}$ for a.e. $x \in X$. We now prove the following folklore result, along the same lines as Proposition 5.1, \cite{MR4124434} which proves the tracial case.  

\begin{proposition}
\label{StrongNormality}
In the above setting, $B$ is a regular subalgebra of $M$ if and only if the corresponding subequivalence relation $\mathcal{S}$ is strongly normal in $\mathcal{R}$. 
\end{proposition}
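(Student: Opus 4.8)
The plan is to prove both implications by translating regularity of $B$ in $M$ into a statement about partial isomorphisms in the full pseudogroup $[[\mathcal{R}]]$ that normalize $\mathcal{S}$. Throughout I identify $A \subset M$ with $L^\infty(X,\mu) \subset L(\mathcal{R},c)$ and $B$ with $L(\mathcal{S},u)$, where $\mathcal{S} \subset \mathcal{R}$ is the subequivalence relation corresponding to the intermediate algebra $A \subset B \subset M$ via \cite[Theorem 1.1]{MR1978219}. The key elementary fact I will use repeatedly is that, since $A$ is maximal abelian in $M$, every unitary $v \in \mathcal{N}_M(A)$ is of the form $v = \sum_n a_n u_{\phi_n}$ for a measurable partition of $X$ and elements $\phi_n \in [\mathcal{R}]$, where $u_\phi$ denotes the canonical unitary in $L(\mathcal{R},c)$ implementing $\phi \in [\mathcal{R}]$; more generally the normalizer $\mathcal{N}_M(A)$ is spanned (in the weak sense, via such countable sums) by $\{u_\phi : \phi \in [\mathcal{R}]\}$ together with $A$.

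First I would prove the ``only if'' direction. Assume $B = L(\mathcal{S},u)$ is regular in $M$. Given $\phi \in [\mathcal{R}]$, I want to show $\phi$ can be written as a countable disjoint union of partial isomorphisms each lying in (the graph of some element of) $\Aut_{\mathcal{R}}(\mathcal{S})$. The point is: for $v = u_\phi \in \mathcal{N}_M(A)$, since $A \subset B$ and $B$ is regular, we have $v B v^* = B$; this says precisely that conjugation by $\phi$ preserves $\mathcal{S}$ as a relation, i.e. $(\phi\times\phi)(\mathcal{S}) = \mathcal{S}$, so $\phi \in \Aut(\mathcal{R})$ with $(\phi\times\phi)(\mathcal{S})=\mathcal{S}$, and since $\phi \in [\mathcal{R}]$ we get $\phi \in \Aut_{\mathcal{R}}(\mathcal{S})$ outright — no decomposition even needed for a single $\phi \in [\mathcal{R}]$. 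The care is in the general normalizing unitary $v \in \mathcal{N}_M(B)$, which need not lie in $\mathcal{N}_M(A)$: I would argue that $v$ normalizes $A' \cap M = A$ only after chopping by central projections of $B$... but in fact here $A \subset B$ forces a subtlety, so instead I reduce as follows. Since $M = \mathcal{N}_M(B)''$ and each $w \in \mathcal{N}_M(B)$ satisfies $w A w^* \subset w B w^* = B$ with $wAw^*$ abelian and regular in $B$ (it is a Cartan subalgebra of $B$ since $A$ is), and $B$ has a unique Cartan subalgebra only in the hyperfinite case — so this is where I must be careful and instead work with the *groupoid-of-germs* description: $M$ is generated by partial isometries $v$ with $v^*v, vv^* \in A$ and $vAv^* = A v v^*$, which normalize $B$ iff they normalize $\mathcal{S}$, and these assemble to elements of $[[\mathcal{R}]]$ normalizing $\mathcal{S}$. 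Collecting these and using that they generate $M$, hence their ``orbit'' generates $\mathcal{R}$, gives that $\Aut_{\mathcal{R}}(\mathcal{S})$ generates $\mathcal{R}$, i.e. $\mathcal{S}$ is strongly normal.

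For the ``if'' direction, assume $\mathcal{S}$ is strongly normal, so $\{\operatorname{graph}(\phi) : \phi \in \Aut_{\mathcal{R}}(\mathcal{S})\}$ generates $\mathcal{R}$. For each $\phi \in \Aut_{\mathcal{R}}(\mathcal{S})$, the canonical unitary $u_\phi \in L(\mathcal{R},c)$ lies in $\mathcal{N}_M(A)$, and since $(\phi\times\phi)(\mathcal{S}) = \mathcal{S}$ one checks directly on the spanning set $\{u_\psi : \psi \in [\mathcal{S}]\} \cup A$ of $B = L(\mathcal{S},u)$ that $u_\phi B u_\phi^* = B$ (the 2-cocycle identities being exactly what is needed for this conjugation to stay inside $L(\mathcal{S},u)$). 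Hence $u_\phi \in \mathcal{N}_M(B)$. Since the graphs of such $\phi$ generate $\mathcal{R}$, the von Neumann algebra generated by $A$ together with $\{u_\phi : \phi \in \Aut_{\mathcal{R}}(\mathcal{S})\}$ is all of $L(\mathcal{R},c) = M$; a fortiori $\mathcal{N}_M(B)'' = M$, so $B$ is regular.

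The main obstacle I anticipate is the bookkeeping in the ``only if'' direction: a general normalizer $w \in \mathcal{N}_M(B)$ of $B$ need not normalize $A$, so one cannot simply read off an element of $\Aut_{\mathcal{R}}(\mathcal{S})$ from it. The right move — following the proof of \cite[Proposition 5.1]{MR4124434} in the tracial case and the pseudogroup picture set up earlier in the paper — is to pass to partial isometries $v$ with $v^*v, vv^* \in A$ and $v A v^* \subset A$ (which *do* generate $M$ by the Feldman–Moore/pseudogroup description), observe each such $v$ decomposes along $X$ into pieces of the form $a\, u_\psi$ with $\psi \in [[\mathcal{R}]]$, and show $v$ normalizes $B$ iff the associated $\psi$ normalizes $\mathcal{S}$. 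Assembling countably many such $\psi$ into honest elements of $\Aut_{\mathcal{R}}(\mathcal{S})$ (using ergodicity/non-singularity to extend partial isomorphisms to full-group elements preserving $\mathcal{S}$, exactly as one builds elements of $[\mathcal{R}]$ from $[[\mathcal{R}]]$) then yields enough of $\Aut_{\mathcal{R}}(\mathcal{S})$ to regenerate $\mathcal{R}$, completing the argument.
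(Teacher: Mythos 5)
Your ``if'' direction is correct and coincides with the paper's: for $\phi \in \Aut_{\mathcal{R}}(\mathcal{S})$ the unitary $u_{\phi}$ lies in $\mathcal{N}_{M}(A) \cap \mathcal{N}_{M}(B)$, and strong normality gives enough of these to generate $M$. The ``only if'' direction, however, has a genuine gap, and it sits exactly at the obstacle you yourself flag. Regularity of $B$ hands you unitaries $u \in \mathcal{N}_{M}(B)$ that generate $M$, but these need not normalize $A$; your proposed fix is to switch to the partial isometries $v$ with $v^{*}v, vv^{*} \in A$ and $vAv^{*} = Avv^{*}$, note that they generate $M$, and observe that such a $v$ normalizes $B$ if and only if the underlying $\psi \in [[\mathcal{R}]]$ normalizes $\mathcal{S}$. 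That observation is true but does not close the argument: the $A$-normalizing partial isometries generate $M$ because $A$ is Cartan, independently of any hypothesis on $B$, and nothing in your argument shows that the ones which \emph{also} normalize $B$ generate anything beyond $B$ itself. The hypothesis that $B$ is regular is never actually used to produce elements of $\mathcal{N}_{M}(A) \cap \mathcal{N}_{M}(B)$, so the claim that $\Aut_{\mathcal{R}}(\mathcal{S})$ generates $\mathcal{R}$ is unsupported. (Your opening assertion that any $u_{\phi}$ with $\phi \in [\mathcal{R}]$ satisfies $u_{\phi}Bu_{\phi}^{*} = B$ ``since $A \subset B$ and $B$ is regular'' is also false as stated; you partly retract it, but it signals the same confusion.)

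The missing idea is a mechanism for correcting an arbitrary $u \in \mathcal{N}_{M}(B)$ by a unitary of $B$ so that it lands in $\mathcal{N}_{M}(A)$. The paper does this by observing that $uAu^{*}$ and $A$ are both Cartan subalgebras of $B$, and then producing an explicit intertwiner between them: for each $v \in \mathcal{N}_{M}(A)$ the element $e_{v} = E_{B}(vu^{*})$ satisfies $e_{v}\beta(a) = \alpha(a)e_{v}$ for all $a \in A$ (where $\beta = \Ad(u)$ and $\alpha = \Ad(v)$), and its polar part $b_{v}$ is a partial isometry satisfying the hypotheses of Lemma \ref{LemmaForStrongNormality}. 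That lemma — which is the real technical content here, proved by an ergodicity argument that extends a partial-isometry intertwiner between Cartan subalgebras to a unitary conjugacy on the central support $z_{v}$ — yields a unitary of $Bz_{v}$ conjugating $uAu^{*}z_{v}$ to $Az_{v}$; regularity of $A$ in $M$ forces $\bigvee_{v} z_{v} = 1$, and patching gives $b \in \mathcal{U}(B)$ with $bu \in \mathcal{N}_{M}(A) \cap \mathcal{N}_{M}(B)$. Note that this is deliberately \emph{not} an appeal to uniqueness of Cartan subalgebras of $B$ (which, as you rightly worry, is not available in the form you would need); it only uses conjugacy of two Cartan subalgebras that are already linked by a nonzero intertwining bimodule element. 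Without this step, or a substitute for it, the ``only if'' direction does not go through.
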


Before proving the above proposition we need the following lemma: 

\begin{lemma}
\label{LemmaForStrongNormality}
    Let $B$ be a von Neumann algebra of type $\textrm{II}_{\infty}$ or type $\textrm{III}$ and $A_{1}$ and $A_{2}$ be Cartan subalgebras of $B$, and let $b$ be a partial isometry such that $b^{*}b \in A_{1}$, $bb^{*} \in A_{2}$ and $bA_{1}b^{*} = A_{2}bb^{*}$. If $z$ is the central support of $b^{*}b$ in $B$, then $A_{1}z$ and $A_{2}z$ are conjugate by a unitary in $Bz$.  
\end{lemma}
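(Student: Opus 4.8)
The plan is to reduce the statement to a partial-isometry lifting argument inside $B z$, using that the two corners $(b^*b) B (b^*b)$ and $(bb^*) B (bb^*)$ are isomorphic via $\Ad(b)$ in a way that carries $A_1 (b^*b)$ to $A_2 (bb^*)$, and then to use the type $\textrm{II}_\infty$/$\textrm{III}$ assumption to "spread out" the comparison from the corners to all of $z$. First I would pass to the central support: replacing $B$ by $B z$, we may assume $z = 1$, i.e. the central support of $e_1 := b^*b$ in $B$ is all of $B$; note $e_2 := bb^*$ also has full central support since $e_1 \sim e_2$ in $B$. The key structural input is that in a type $\textrm{II}_\infty$ or type $\textrm{III}$ von Neumann algebra, any projection with full central support is equivalent to $1$ in each of its central "pieces"; more precisely, since $B$ is properly infinite on the support of each of its central summands and $e_1$, $e_2$ have full central support, there exist partial isometries realizing $e_i \sim 1$ whenever the corresponding central summand is properly infinite, and the type $\textrm{II}_1$ obstruction is absent by hypothesis.

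The main step is the following gluing construction, carried out exactly as in the standard "maximality" argument used earlier in the paper (the one modeled on \cite[Lemma 12.1.2]{ADP_An_introduction}). Take a maximal family $\{v_i\}$ of partial isometries in $B$ with pairwise orthogonal source projections $p_i \le e_1$ lying under $e_1$... more precisely: one builds a unitary $w \in B$ with $w e_1 w^* = e_2$ and, crucially, $w A_1 w^* = A_2$, by transporting the Cartan structure along $b$ on the corner and then using maximality together with the type assumption to extend $\Ad(b)|_{e_1 B e_1}$ to an honest automorphism of $B$ implemented by a unitary. Concretely: on the corner $e_1 B e_1$ we have the Cartan subalgebra $A_1 e_1$, and $\Ad(b): e_1 B e_1 \to e_2 B e_2$ maps it onto $A_2 e_2$ by the hypothesis $b A_1 b^* = A_2 b b^*$. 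Since $A_1$ is a Cartan subalgebra of $B$ and $e_1 \in A_1$, the corner $A_1 e_1 \subset e_1 B e_1$ is again Cartan, and likewise $A_2 e_2 \subset e_2 B e_2$. Now I would choose a partial isometry $b' \in B$ with ${b'}^* b' = 1 - e_1$... but this requires $1 - e_1 \precsim$ something; the cleaner route is: since $e_1$ has full central support and $B$ has no type $\textrm{II}_1$ part, write $1 = \sum_{n} f_n$ with $f_n \precsim e_1$ for projections $f_n$ summing to $1$ (possible as $B$ is properly infinite on each central summand — a matrix-unit decomposition), transport the Cartan subalgebra along these comparisons using that conjugation by a partial isometry between two projections $\le e_1$ in the \emph{same} MASA-corner preserves the Cartan structure because $A_1$ is a Cartan subalgebra of $B$. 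Assembling these, one obtains a unitary $u_1 \in B$ and a unitary $u_2 \in B$ with $u_1 A_1 u_1^* = A_1 e_1 \oplus (\text{transported copies})$ normalized so that $u_1 A_1 u_1^* \supset$ the corner picture, and similarly $u_2$; then $w := u_2^{-1} (\text{ampliation of } \Ad b) u_1$ does the job.

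The hard part will be the bookkeeping in this "spreading out" step: making sure that when we transport $A_1 e_1$ around $B$ via partial isometries realizing $e_1 \sim f_n$, the transported MASAs glue together to give exactly $A_1$ (not merely some Cartan subalgebra conjugate to it) and that the corner map $\Ad(b)$ is compatible with this gluing so that the resulting global unitary $w$ satisfies \emph{both} $w e_1 w^* = e_2$ and $w A_1 w^* = A_2$ simultaneously. This is where the hypothesis that $B$ has no type $\textrm{I}_n$ or $\textrm{II}_1$ summand on $z$ is essential — it is exactly what guarantees the needed supply of partial isometries subordinate to $e_1$ and lets the maximality argument terminate with $p_0 = q_0 = 0$, as in the cocycle-crossed-product discussion above. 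I would set this up by first reducing, via a maximal-family argument, to the case where $e_1 \sim e_2 \sim 1$ outright (absorbing the difference into $w$), and then the remaining claim is simply that any two Cartan subalgebras of $B$ that agree after conjugating into a common full corner are globally unitarily conjugate, which follows because the conjugating unitary in the corner extends — using $e_1 \sim 1$ — to a unitary of $B$ conjugating $A_1$ to $A_2$. Uniqueness of the conditional expectation (invoked via \cite[Theorem 1.1]{MR1978219} in the surrounding text) is not needed here; only the factor-by-factor structure theory of type $\textrm{II}_\infty$ and $\textrm{III}$ von Neumann algebras.
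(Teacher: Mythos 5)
There is a genuine gap, and it is exactly at the point you flag as ``the hard part.'' Your plan is to spread the corner isomorphism $\Ad(b)\colon e_1 B e_1 \to e_2 B e_2$ out to all of $B$ by writing $1=\sum_n f_n$ with $f_n \precsim e_1$ and transporting $A_1 e_1$ along partial isometries realizing $e_1 \sim f_n$. But a generic partial isometry $v$ with $v^*v = e_1$ and $vv^* = f_n$ (even with both projections in $A_1$) carries $A_1 e_1$ to \emph{some} Cartan subalgebra of $f_n B f_n$, not to $A_1 f_n$; your assertion that ``conjugation by a partial isometry between two projections $\le e_1$ in the same MASA-corner preserves the Cartan structure because $A_1$ is a Cartan subalgebra of $B$'' is false for Murray--von Neumann comparison isometries, and without it the transported pieces do not glue back to $A_1$ itself. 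You never supply a mechanism that forces the gluing to recover $A_1$ on the source side and $A_2$ on the target side simultaneously, which is the entire content of the lemma.

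The paper closes this gap by choosing the spreading-out partial isometries in the \emph{normalizers} of the Cartan subalgebras: it passes to the Feldman--Moore equivalence relations $\mathcal{R}_i$ of $A_i \subset B$ (after reducing to the factor case by direct integral over the center) and uses ergodicity to produce elements $\phi_n \in [[\mathcal{R}_1]]$ with domain the support set of $p=b^*b$ whose ranges tile the complement, and likewise $\psi_n \in [[\mathcal{R}_2]]$ for $q=bb^*$. The corresponding partial isometries $u_{\phi_n}$, $u_{\psi_n}$ normalize $A_1$ and $A_2$ by construction, so $u = b + \sum_n u_{\psi_n} b\, u_{\phi_n}^*$ is a unitary that conjugates $A_1$ onto $A_2$ with no further bookkeeping. (The type $\textrm{II}_\infty$/$\textrm{III}$ hypothesis enters only to guarantee that such a countable tiling by translates of the support set exists, i.e.\ that no finite-trace obstruction survives.) If you want to salvage your argument you must replace your comparison isometries by normalizing ones drawn from the full pseudogroups; ergodicity of $\mathcal{R}_1$ and $\mathcal{R}_2$ is what guarantees they exist with the prescribed sources and ranges, and this ingredient does not appear anywhere in your proposal.
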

\begin{proof}
First, we shall prove this in the case where $B$ is a factor of type $\textrm{III}$. Let $p = b^{*}b$ and $q = bb^{*}$ be projections in $A_{1}$ and $A_{2}$ respectively. Let $\mathcal{R}_{1}$ and $\mathcal{R}_{2}$ be the countable ergodic nonsingular equivalence relations corresponding to the Cartan inclusions $A_{1} \subset B$ and $A_{2} \subset B$ on probability spaces $(X_{1}, \mu_{1})$ and $(X_{2}, \mu_{2})$ respectively. Let $U_{1}$ and $U_{2}$ denote the range sets of the projections $p$ and $q$ in $X_{1}$ and $X_{2}$ respectively. Assume first that $p$ and $q$ are both strictly smaller than 1. By ergodicity, we can choose elements of the full pseudogroups $\phi \in [[\mathcal{R}_{1}]]$ and $\psi \in [[\mathcal{R}_{2}]]$ such that $D(\phi) = U_{1}$, $R(\phi) = X_{1} \backslash U_{1}$, $D(\psi) = U_{2}$ and $R(\psi) = X_{2} \backslash U_{2}$. Let $u_{\phi}$ (with source and range projections $p$ and $1-p$ respectively) and $u_{\psi}$ (with source and range projections $q$ and $1-q$ respectively)  be the corresponding partial isometries in $B$ normalizing the Cartan subalgebras $A_{1}$ and $A_{2}$ respectively. 
    
We now consider the element $u = b + u_{\psi}bu_{\phi}^{*}$. This is clearly a unitary in $B$ since it is the sum of two partial isometries with orthogonal domains and ranges. We claim that $u$ conjugates $A_{1}$ to $A_{2}$. To prove the claim we write an element $a \in A_{1}$ as $a = ap + a(1-p)$ and notice first that $b(ap)b^{*} \in A_{2}q$. We also have that $(u_{\psi}bu_{\phi}^{*})a(1-p) (u_{\phi}b^{*}u_{\psi}^{*}) = u_{\psi}b(u_{\phi}^{*}au_{\phi})b^{*}u_{\psi}^{*}$. Note that since $u_{\phi}$ normalizes $A_{1}$, hence $u_{\phi}^{*}au_{\phi} \in A_{1}$. Now as we have that $bA_{1}b^{*} = A_{2}q$, and finally as $u_{\psi}$ normalizes $A_{2}$, we have that $u_{\psi}A_{2}qu_{\psi}^{*} \subseteq A_{2}(1-q)$. Hence  $uA_{1}u_{*} \subseteq A_{2}$ and a similar argument shows that $u^{*}A_{2}u \subseteq A_{1}$ and therefore they are equal. 

Now for the remaining cases, notice that if $b^{*}b = bb^{*} = 1$, then $b$ is the required unitary. Suppose now that $b^{*}b = 1$ and $bb^{*} < 1$. We can choose a projection $p_{1} \neq 0$ in $A_{1}$ such that $p_{1} < 1$ and consider the partial isometry $b_{1} = bp_{1}$. Now we have that $p = b_{1}^{*}b_{1} \in A_{1}$, $q = b_{1}b_{1}^{*} \in A_{2}$, $b_{1}A_{1}b_{1}^{*} = A_{2}b_{1}b_{1}^{*}$ and $p$ and $q$ are both strictly smaller than 1. By the preceding case, we then have that $A_{1}$ and $A_{2}$ are unitarily conjugate. A similar argument clearly works if $b^{*}b < 1$ and $bb^{*} = 1$.  

Now suppose $B$ is of type $\textrm{II}_{\infty}$ and $\tau(p) = \tau(q) < \infty$, where $\tau$ is the faithful normal semifinite trace. By ergodicity of the equivalence relations, we can find a sequence of elements $\phi_{n} \in [[\mathcal{R}_{1}]]$ such that $U_{1} \bigsqcup \sqcup_{n}\phi_{n}(U_{1}) = X_{1}$ where $D(\phi_{n}) = U_{1}$ and similarly a sequence of elements $\psi_{n} \in [[\mathcal{R}_{2}]]$ where $D(\psi_{n}) = U_{2}$ such that $U_{2} \bigsqcup \sqcup_{n}\psi_{n}(U_{2}) = X_{2}$. As earlier we denote $u_{\phi_{n}}$ and $u_{\psi_{n}}$ to be the corresponding partial isometries. Now we consider the element $u = b + \sum_{n} u_{\psi_{n}}bu_{\phi_{n}}^{*}$. Since this is a sum of partial isometries with orthogonal domains and ranges, $u$ is a unitary. As for each $n \in \mathbb{N}$, we know that $u_{\phi_{n}}$ normalizes $A_{1}$ and $u_{\psi_{n}}$ normalizes $A_{2}$, it is easy to see as in the type $\textrm{III}$ case that $u$ conjugates $A_{1}$ to $A_{2}$. If $p$ and $q$ are infinite: we choose a finite projection $p_{1} \in A_{1}$ and consider the partial isometry $b_{1} = bp_{1}$. Now we have that $b_{1}A_{1}b_{1}^{*} = A_{2}b_{1}b_{1}^{*}$ and the source and the range projections $b_{1}^{*}b_{1} \in A_{1}$ and $b_{1}b_{1}^{*} \in A_{2}$ have finite trace. Thus we are reduced to the previous case. 

Suppose now that $B \subseteq \mathcal{B}(\mathcal{H})$ be an arbitrary von Neumann algebra of type $\textrm{II}_{\infty}$ or $\textrm{III}$. Let the centre be $\mathcal{Z}(B) = L^{\infty}(Y, \nu)$ and let the ergodic decomposition be $B = \int^{\oplus}_{y \in Y}B_{y}$. We can write the Cartan subalgebras $A_{1}$ and $A_{2}$ as direct integrals of Cartan subalgebras $\int_{y \in Y}^{\oplus} A_{1,y}$ and $\int_{y \in Y}^{\oplus} A_{2.y}$. Let $p$ and $q$ be the source and range projections $b^{*}b$ and $bb^{*}$ respectively, $z$ be the central support of $p$ and $q$, and $E \subset Y$ be the range set of $z$. Given $y \in E$, by the factor case, we can find a unitary $u_{y} \in B_{y}$ such that $u_{y}A_{1,y}u_{y}^{*} = A_{2,y}$. Letting $u = \int_{y \in E}^{\oplus} u_{y}$ be the corresponding unitary in $Bz$, clearly $u$ conjugates $A_{1}z$ to $A_{2}z$.           
\end{proof}

\begin{proof}[Proof of Proposition \ref{StrongNormality}]
If $B$ is of type $\textrm{II}_{1}$, the result follows from \cite[Proposition 5.1]{MR4124434}. Now suppose $B \subset M$ be regular. We notice that there is a natural map from $\mathcal{N}_{M}(B) \cap \mathcal{N}_{M}(A)$ to $\Aut_{\mathcal{R}}(\mathcal{S})$. Thus it is enough to prove that for any $u \in \mathcal{N}_{M}(B)$, there is a unitary element $b \in \mathcal{U}(B)$ such that $bu \in \mathcal{N}_{M}(A)$. 

Now as in \cite{MR4124434}, for any $u \in \mathcal{N}_{M}(B)$ and a fixed $v \in \mathcal{N}_{M}(A)$, we look at the automorphisms $\Aut(B) \ni \beta : x \mapsto uxu^{*}$ and $\Aut(A) \ni \alpha: x \mapsto vxv^{*}$. We define $e_{v} = E_{B}(vu^{*})$ and note that $e_{v} \beta(a) = \alpha(a)e_{v}$ for all $a \in A$. Letting $b_{v}$ denote the partial isometry in the polar decomposition of $e_{v}$, we note that $b_{v}\beta(a) = \alpha(a)b_{v}$ for all $a \in A$. We denote by $z_{v} \in \mathcal{Z}(B)$, the central support of $b_{v}^{*}b_{v} \in B$. For each projection $a \in A$, since we also have $\beta(a)b_{v}^{*} = b_{v}^{*}\alpha(a)$, we have for all projections $a \in A$, 
\begin{equation}
    \alpha(a)b_{v}b_{v}^{*} = b_{v}\beta(a)b_{v}^{*} = b_{v}b_{v}^{*}\alpha(a)
\nonumber
\end{equation}
Since $A \subset M$ is a Cartan subalgebra, we have that $A' \cap M = A$ and hence that $b_{v}b_{v}^{*} \in A$. Thus we can find a projection $q \in A$ such that $\alpha(q) = b_{v}b_{v}^{*}$. By the same argument, as $\beta(A) \subset M$ is a Cartan subalgebra, we can find a projection $p \in A$ such that $\beta(p) = b_{v}^{*}b_{v}$. We note now that the Cartan subalgebras $\beta(A)z_{v}$ and $Az_{v}$ satisfy all the hypotheses of Lemma \ref{LemmaForStrongNormality} and therefore $\beta(A)z_{v}$ and $Az_{v}$ are unitarily conjugate as Cartan subalgebras of $Bz_{v}$. Since $A$ is regular in $M$, clearly the join of all the central projections $z_{v}$ as $v$ varies over $\mathcal{N}_{M}(A)$ is 1. Thus we can conclude that the join of the unitaries in $Bz_{v}$ we get from Lemma \ref{LemmaForStrongNormality} is a unitary $b \in \mathcal{U}(B)$ that conjugates $\beta(A)$ to $A$ and thus $bu \in \mathcal{N}_{M}(A)$.    

On the other hand, if $\mathcal{S} \subset \mathcal{R}$ is strongly normal, $\Aut_{\mathcal{R}}(\mathcal{S})$ generates $\mathcal{R}$. Clearly if $\phi \in \Aut_{\mathcal{R}}(\mathcal{S})$, the corresponding unitaries $u_{\phi} $ belong to $ \mathcal{N}_{M}(B)$, and hence $B \subset M$ is regular. 
\end{proof}

If the conclusion of Proposition \ref{StrongNormality} is satisfied, we can write $A \subset B$ as the direct integral of Cartan inclusions $(A_{y} \subset B_{y})_{y \in Y}$ where t $\mathcal{Z}(B) = L^{\infty}(Y,\mu)$, $B_{y}$ is a factor a.e. and $(\mathcal{S}_{y})_{y \in Y}$ is the associated field of ergodic equivalence relations. Then $M$ can be realized as the crossed product of a free cocycle action $(\alpha,u)$ of a discrete measured groupoid $\mathcal{G}$ on the measurable field $(A_{y} \subset B_{y})_{y \in Y}$. More precisely, $\mathcal{G}^{(0)} = Y$ and we have the following: 
\begin{itemize}
    \item a measurable field of Cartan preserving *-isomorphisms between the sources and the ranges, i.e.,  $g \mapsto \alpha_{g}:B_{s(g)} \rightarrow B_{t(g)}$ such that $\alpha_{g}(A_{s(g)}) = A_{t(g)}$.
    \item a measurable field of unitaries normalizing the Cartan subalgebras, i.e., $\mathcal{G}^{(2)} \ni (g,h) \mapsto u(g,h) \in \mathcal{N}_{B_{t(g)}}(A_{t(g)})$
\end{itemize}
that satisfy the conditions in Definition 2.2.3. Alternately we can view $\mathcal{R}$ as a semidirect product of the subequivalence relation $\mathcal{S} = (\mathcal{S}_{y})_{y \in Y}$ with an outer cocycle action of $\mathcal{G}$ on $\mathcal{S}$. We conclude this section with the definition of cocycle conjugacy for actions on Cartan inclusions. 

\begin{definition}
\label{DefCocycleConjugacyCartan}
\begin{enumerate}[1.]
    \item Two cocycle actions $(\alpha,u)$ and $(\beta,v)$ of a countable group $G$ on  Cartan inclusions $A \subset B$ and $C \subset D$ as above are said to be cocycle conjugate if there exists a *-isomorphism $\theta: B \rightarrow D$ with $\theta(A) = C$ and unitaries $w_{g} \in \mathcal{N}_{D}(C)$ such that
    \begin{itemize}
        \item $\theta \circ \alpha_{g} \circ \theta^{-1}= \Ad(w_{g})\circ \beta_{g}$ for all $g \in G$.
        \item $\theta(u(g,h)) = w_{g}\beta_{g}(w_{h})v(g,h)w_{gh}^{*}$ for all $g,h \in G$.
    \end{itemize}
    
    \item Two cocycle actions $(\alpha,u)$ and $(\beta,v)$ of a discrete measured groupoid $\mathcal{G}$ on fields of Cartan inclusions $(A_{x } \subset B_{x})_{x \in X}$ and $(C_{x} \subset D_{x})_{x \in X}$ respectively are said to be \textit{cocycle conjugate} if there exists a measurable field of $*$-isomorphisms $x \mapsto \theta_{x}: B_{x} \rightarrow D_{x}$ with $\theta_{x}(A_{x}) = C_{x}$ and a measurable field of unitaries $g \mapsto w_{g} \in \mathcal{N}_{D_{t(g)}}(C_{t(g)})$ that satisfy:
 \begin{itemize}
     \item  $\theta_{t(g)} \circ \alpha_{g} \circ \theta^{-1}_{s(g)} = \Ad(w_{g}) \circ \beta_{g}$ for all $g \in \mathcal{G}$.
     \item  $\theta_{t(g)}(u(g,h)) = w_{g}\beta_{g}(w_{h})v(g,h)w^{*}_{gh}$ for all $(g,h) \in \mathcal{G}^{(2)}$.
 \end{itemize}
\end{enumerate}

\end{definition}

\subsection{Central freeness and Connes-Takesaki module map}

Recall the continuous core $(\widetilde{M},\mathbb{R},\theta,\tau)$ associated to a factor $M$ of type $\textrm{III}$ with respect to a faithful normal state $\phi$. We develop some notation here. The group $\widetilde{\mathcal{U}}(M) = \{u \in \mathcal{U}(\widetilde{M}) \; | \; uMu^{*}=M\}$, the normalizer of $M$ in $\widetilde{M}$ is called the \textit{extended unitary group} of $M$. For each $u \in \widetilde{\mathcal{U}}(M)$, we write 
\begin{equation}
    \widetilde{Ad}(u) = \Ad(u)|_{M} \in \Aut(M) \nonumber
\end{equation}
and set $\Cnt_{r}(M) = \{\widetilde{Ad}(u) \; | \; u \in \Tilde{\mathcal{U}}(M)\}$ which is a normal subgroup of $\Aut(M)$. Each $\alpha \in \Aut(M)$ can be extended to an automorphism $\Tilde{\alpha} \in \Aut(\widetilde{M})$ given by: \begin{equation}
    \Tilde{\alpha}(xu_{t}) = \alpha(x)(D\phi \circ \alpha^{-1}: D \phi)_{t}u_{t} \; \nonumber
\end{equation}
where $(D\phi \circ \alpha^{-1}: D \phi)_{t}$ is the Connes cocycle derivative, the trace $\tau$ satisfies $\tau \circ \Tilde{\alpha} = \tau$ and for every $s \in \mathbb{R}$, $\Tilde{\alpha} \circ \theta_{s} = \theta_{s} \circ \Tilde{\alpha}$. The restriction of $\Tilde{\alpha}$ to the center $\mathcal{C}_{M}$ is called the \textit{Connes-Takesaki module map} of $\alpha$ and is denoted by $ \modnew(\alpha) \in \Aut(\mathcal{C_{M}})$ which commutes with the smooth flow of weights $\theta|_{\mathcal{C_{M}}}$. Similarly for an action $\alpha: G \curvearrowright M$, we denote the action on the flow of weights by $ \modnew(\alpha): G \curvearrowright \mathcal{C}_{M}$.  

The automorphism group $\Aut(M)$ is identified with the following subgroup $\Aut_{\tau,\theta}(M)$ of $\Aut(\widetilde{M})$: 
\begin{equation}
    \Aut_{\tau,\theta}(M) = \{\sigma \in \Aut(\widetilde{M}) \; | \; \sigma \circ \theta_{s} = \theta_{s} \circ \sigma, \; \tau\circ\sigma = \tau\} \nonumber
\end{equation}
In other words, for each $\alpha \in \Aut(M)$, $\Tilde{\alpha} \in \Aut_{\tau,\theta}(M)$ and if an element $\sigma \in \Aut_{\tau,\theta}(M)$ leaves each element of $M$ invariant, then $\sigma = \id$. 

For an action $\alpha: G \rightarrow \Aut(M)$, if $w: G \rightarrow \mathcal{U}(M)$ satisfies $w(g)\alpha_{g}(w(h)) = w(gh)$, then we say $w$ is a \textit{1-cocycle} for $\alpha$. We denote by $Z^{1}_{\alpha}(G,\mathcal{U}(M))$ the set of all 1-cocycles for $\alpha$. Recall that two cocycle actions $(\alpha,u)$ and $(\beta,v)$ are said to be \textit{cocycle conjugate} if there exists $w:G \rightarrow \mathcal{U}(M)$ and $\theta \in \Aut(M)$ such that 
\begin{equation}
    \Ad(w_{g})\circ \alpha_{g} = \theta \circ \beta_{g} \circ \theta^{-1} \text{  and  } w(g)\alpha_{g}(w(h))u(g,h)w(gh)^{*} = \theta(v(g,h)), \nonumber
\end{equation}

\begin{definition}
\label{DefStrongCocycleConjugacy}
\begin{enumerate}[1.]
    \item Let $G$ be a countable group and let $(\alpha,u)$ and $(\beta,v)$ be two cocycle actions of $G$ on injective factors $B$ and $D$ respectively. We say that $(\alpha,u)$ and $(\beta,v)$ are \textit{strongly cocycle conjugate} if they are cocycle conjugate as in Definition \ref{DefCocycleConjugacy} and $\theta: B \rightarrow D$ can be chosen such that the induced isomorphism on the flow of weights is trivial, i.e.,  $ \modnew(\theta) = 1$.
    
    \item Let $(X, \mu)$ be a standard probability space and $\mathcal{G}$ be a discrete measured groupoid with unit space $\mathcal{G}^{0} = X$. Let $(\alpha,u)$ and $(\beta,v)$ be two cocycle actions of $\mathcal{G}$ on fields of injective  factors $(B_{x})_{x \in X}$ and $(D_{x})_{x \in X}$. We say that $\alpha$ and $\beta$ are \textit{strongly cocycle conjugate} if they are cocycle conjugate as in Definition \ref{DefCocycleConjugacy} and moreover for almost every $x \in X$, $\theta_{x}: B_{x} \rightarrow D_{x}$ can be chosen such that the induced isomorphims on the flow of weights are trivial, i.e.,  $ \modnew(\theta_{x}) = 1$. 

\end{enumerate}
\end{definition}

For an injective factor $M$ of type $\textrm{III}$, the Connes-Takesaki module map is surjective and the following exact sequence splits: 
\begin{equation}
    1\rightarrow \ker(\modnew) \rightarrow \Aut(M) \xrightarrow[]{\modnew} \Aut_{\theta}(\mathcal{C}) \rightarrow 1. \nonumber
\end{equation}
An important object associated with an action $\alpha: G \rightarrow \Aut(M)$ is the normal subgroup of $G$ defined by: 
\begin{equation}
    N_{\alpha} = \{g \in G \; | \; \alpha_{g} \in \Cnt_{r}(M)\} \nonumber
\end{equation}
We note that $ \modnew(\alpha_{n}) = \id$ for $n \in N_{\alpha}$ and hence $ \modnew(\alpha)$ can be regarded as an action of $G/ N$ on $\mathcal{C}_{M}$. The classification of discrete amenable group actions on injective factors up to cocycle conjugacy has been developed over the years by many hands (c.f. \cite{MR587749}, \cite{MR807949}, \cite{MR842413}, \cite{MR990219}, \cite{MR1179014}, \cite{MR1621416}). In particular, the classification theorem states that any free action $\alpha$ of a discrete amenable group on an injective factor of type $\textrm{III}$ is classified by three invariants: the normal subgroup $N_{\alpha}$, the action on the flow of weights $ \modnew(\alpha)$ and a characteristic invariant $\chi(\alpha)$. 

Moreover if the subgroup $N_{\alpha}$ is trivial, then the characteristic invariant $\chi(\alpha)$ vanishes, and such actions are classified just by the actions on the flow of weights. We shall focus precisely on this picture. We begin with the notion of central freeness introduced by Ocneanu in \cite{MR807949} for group actions on von Neumann algebras.

\begin{definition}
For a factor $M$, an automorphism $\theta \in \Aut(M)$ is called \textit{centrally trivial} if for any centralizing sequence $x_{n} \in M$, i.e., which is norm bounded and satisfies $\lim_{n \rightarrow \infty} \|[\phi,x_{n}]\| = 0$ for any $\phi \in M_{*}$, we have $\theta(x_{n})-x_{n} \rightarrow 0$ $*$-strongly. A discrete group action $\alpha: G \rightarrow \Aut(M)$ is called \textit{centrally free} if for any nontrivial $g \in G$, $\alpha_{g}$ is not centrally trivial.
\end{definition}

By \cite[Theorem 1]{MR1179014}, in case of an injective factor $M$ of type $\textrm{III}$, an automorphism $\theta \in \Aut(M)$ is centrally trivial if and only if $\theta = \Ad(u)\circ \overline{\sigma}^{\phi}_{c}$ where $\overline{\sigma}^{\phi}_{c}$ is an extended modular automorphism for a dominant weight $\phi$ on $M$, $c$ is a $\theta$ cocycle on $\mathcal{U}(\mathcal{C}_{M})$ and $u \in \mathcal{U}(M)$. By Proposition 5.4, \cite{MR1064693}, this is equivalent to $\Tilde{\theta}$ being inner on $\widetilde{M}$ for an infinite factor. Hence we conclude that centrally free actions are precisely the ones that are outer as actions on the continuous core. We shall now introduce the invariants for actions on ergodic flows. 

\begin{definition}
    \begin{enumerate}[1.]
        \item Let $(F_{s})_{s \in \mathbb{R}}$ and $(F'_{s})_{s \in \mathbb{R}}$ be ergodic flows on $(X,\mu)$ and $(X',\mu')$ respectively. Let $\delta$ and $\delta'$ be a two actions of a countable group $G$ on $F_{s}$ and $F'_{s}$ respectively. We say that $\delta$ and $\delta'$ are conjugate and denote it by $\delta \sim \delta'$ if there exists a nonsingular $\mathbb{R}$-equivariant isomorphism $\sigma: X \rightarrow X'$ such that $\sigma \circ \delta_{g} \circ \sigma^{-1} = \delta'_{g}$ for all $g \in G$.  
        \item Let $(Y, \nu)$ be a standard probability space and let $F_{y}$ on $(X_{y}, \mu_{y})_{y \in Y}$ and $F'_{y}$ on $(X'_{y}, \mu'_{y})_{y \in Y}$ be two measurable fields of ergodic flows. Let $\mathcal{G}$ be a discrete measured groupoid with $\mathcal{G}^{(0)} = X$ and let $\delta$ and $\delta'$ be two actions of $\mathcal{G}$ on the fields of flows $(F_{y})_{y \in Y}$ and $(F'_{y})_{y \in Y}$ respectively. We say that $\delta$ and $\delta'$ are conjugate and denote by $\delta \sim \delta'$ if there exists a measurable field of nonsingular $\mathbb{R}$-equivariant isomorphisms $\sigma_{y}: X_{y} \rightarrow X'_{y}$ such that for a.e. $g \in \mathcal{G}$, we have that $\sigma_{t(g)} \circ \delta_{g} \circ \sigma_{s(g)}^{-1} = \delta'_{g}$.   
    \end{enumerate}
\end{definition}

From the previous discussion, the classification theorem for centrally free actions can be stated as follows: 

\begin{theorem}(\cite[Theorem 2.3]{MR3181546})
\label{StrongCocycleConjugacy}
Let $M$ be an injective factor and $(\alpha,u)$ and $(\beta,v)$ be two centrally free cocycle actions of a discrete amenable group $G$ on $M$. Then we have the following: 
\begin{enumerate}[(i)]
    \item The cocycle actions $(\alpha,u)$ and $(b,v)$ are cocycle conjugate if and only if $ \modnew(\alpha) \sim  \modnew(\beta)$.
    \item The cocycle actions $(\alpha,u)$ and $(b,v)$ are strongly cocycle conjugate if and only if $ \modnew(\alpha) =  \modnew(\beta)$.
\end{enumerate}
\end{theorem}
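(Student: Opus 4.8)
The plan is to prove the two equivalences by treating necessity as a direct computation with the module map and sufficiency by an Evans--Kishimoto type intertwining argument, after first reducing to genuine actions. Throughout I would use that $\modnew\colon\Aut(M)\to\Aut_{\theta}(\mathcal{C}_{M})$ is a group homomorphism which is trivial on $\Cnt_{r}(M)\supseteq\Int(M)$, and that the exact sequence $1\to\ker(\modnew)\to\Aut(M)\xrightarrow{\modnew}\Aut_{\theta}(\mathcal{C}_{M})\to 1$ splits.

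\textbf{Necessity.} Suppose $(\alpha,u)$ and $(\beta,v)$ are cocycle conjugate, witnessed by $\theta\in\Aut(M)$ and $w\colon G\to\mathcal{U}(M)$ with $\Ad(w_{g})\circ\alpha_{g}=\theta\circ\beta_{g}\circ\theta^{-1}$. Applying $\modnew$ kills the inner part $\Ad(w_{g})$ and yields $\modnew(\alpha_{g})=\modnew(\theta)\,\modnew(\beta_{g})\,\modnew(\theta)^{-1}$ for every $g\in G$, i.e. $\modnew(\alpha)\sim_{\sigma}\modnew(\beta)$ with $\sigma=\modnew(\theta)$. If moreover the conjugacy is strong, then $\modnew(\theta)=1$ and hence $\modnew(\alpha)=\modnew(\beta)$. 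This gives the ``only if'' directions of (i) and (ii).

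\textbf{Reduction to genuine actions and to the case of equal modules.} For sufficiency I would first dispose of the $2$-cocycles: since an injective factor of type $\textrm{III}$ is properly infinite, a Packer--Raeburn stabilization argument (Theorem~\ref{2CohomologyVanishingGroupoidsAlternate}) shows every $2$-cocycle $u\in Z^{2}(G,\mathcal{U}(M))$ is a coboundary, and perturbing $\alpha$ by the corresponding unitaries — which are inner, hence do not change $\modnew(\alpha)$ — replaces $(\alpha,u)$ by a genuine action with the same module. So it suffices to classify genuine centrally free actions. Next, given a nonsingular $\mathbb{R}$-equivariant conjugacy $\sigma$ between $\modnew(\alpha)$ and $\modnew(\beta)$, I would lift $\sigma$ through the splitting to $\theta_{0}\in\Aut(M)$ with $\modnew(\theta_{0})=\sigma$ and replace $\beta$ by $\theta_{0}\circ\beta\circ\theta_{0}^{-1}$; this reduces (i) to the setting of (ii), namely $\modnew(\alpha)=\modnew(\beta)$.

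\textbf{Intertwining.} The core is then to show: two genuine centrally free actions $\alpha,\beta\colon G\curvearrowright M$ with $\modnew(\alpha)=\modnew(\beta)$ are strongly cocycle conjugate. I would build $\theta$ as a $u$-topology limit $\theta=\lim\Ad(x_{1}\cdots x_{n})$ of approximately inner automorphisms, alternately pushing $\alpha$ toward $\beta$ and $\beta$ toward $\alpha$ on increasing finite subsets of $G$ and finite $\|\cdot\|_{\phi}$-balls. Two ingredients drive each stage: (1) a model-absorption ($\alpha\cong\alpha\otimes\alpha_{0}$ for a centrally free model action $\alpha_{0}$ with the prescribed module — a McDuff-type property for injective factors), which provides the ``room'' for approximate conjugacies; and (2) a Rokhlin tower/paving argument for the amenable group $G$ acting on the injective factor, producing the approximating unitaries, with the residual errors corrected by Ocneanu's $1$-cohomology vanishing (\cite[Proposition~7.2]{MR807949}). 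Because all correcting unitaries are inner and all residual automorphisms lie in $\ker(\modnew)$, the module is unchanged throughout, so the limit $\theta$ satisfies $\modnew(\theta)=1$, giving (ii); composing with the preliminary lift $\theta_{0}$ from the previous step gives the ``if'' direction of (i). The necessity of central freeness enters precisely in guaranteeing that the characteristic invariant $\chi(\alpha)$ vanishes, so that no further obstruction beyond $\modnew(\alpha)$ survives.

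\textbf{Main obstacle.} The delicate point is running the Rokhlin/intertwining machinery \emph{relative to the flow of weights} in a way uniform over all types, including $\textrm{III}_{1}$, where there is no discrete decomposition to fall back on. Following Masuda, I would work throughout on the continuous core $\widetilde{M}$ — where central freeness of $\alpha$ becomes genuine outerness of $\widetilde{\alpha}$ by \cite{MR1064693} — establish the Rokhlin property there $\theta$-equivariantly, and only afterwards descend to $M=\widetilde{M}^{\theta}$. The step requiring the most care is arranging the approximating perturbations to lie in $\ker(\modnew)$ while still achieving the required approximations on $\widetilde{M}$; this is exactly where the characteristic-invariant obstruction would reappear were $N_{\alpha}$ nontrivial, and it is here that the centrally free hypothesis is doing the essential work.
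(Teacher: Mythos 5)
This statement is not proved in the paper at all: it is quoted verbatim from Masuda, \cite[Theorem 2.3]{MR3181546}, and used as a black box, so there is no internal proof to compare yours against. Judged on its own terms, your outline is a faithful high-level summary of how the proof actually goes in the literature (\cite{MR2353241}, \cite{MR3181546}): the necessity direction via applying $\modnew$ and killing the inner part is correct and complete; the reduction to genuine actions and to $\modnew(\alpha)=\modnew(\beta)$ via the splitting of $1\to\ker(\modnew)\to\Aut(M)\to\Aut_{\theta}(\mathcal{C}_{M})\to 1$ is the standard first step; and the Evans--Kishimoto intertwining driven by a Rokhlin-type theorem, model-action absorption, and Ocneanu's $1$-cohomology vanishing, with $\modnew(\theta)=1$ in the limit because $\overline{\Int}(M)\subseteq\ker(\modnew)$, is indeed the architecture of Masuda's argument. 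Your closing remark correctly locates where central freeness is used (vanishing of the characteristic invariant, outerness of $\widetilde{\alpha}$ on the core).

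That said, what you have is a plan, not a proof: the three ingredients that carry essentially all of the difficulty --- the Rokhlin property for centrally free actions of amenable groups uniformly across all types including $\textrm{III}_{1}$, the construction and absorption $\alpha\cong\alpha\otimes\alpha_{0}$ of a model action with prescribed module, and the actual intertwining estimates --- are named but not established, and each of these is a substantial theorem in its own right. One smaller inaccuracy: you justify the $2$-cocycle vanishing by proper infiniteness, but the theorem as stated covers all injective factors, including type $\textrm{II}_{1}$, where that argument does not apply and one must instead invoke Ocneanu's (or the Popa--Shlyakhtenko--Vaes) $2$-cohomology vanishing. Since the paper's intended reading is simply to cite \cite{MR3181546}, your sketch is best viewed as a correct roadmap to that external proof rather than a replacement for it.
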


We conclude this section with the definition of centrally free actions for a discrete measured groupoid on a field of factors. 

\begin{definition}
Let $\mathcal{G}$ be a discrete measured groupoid with $\mathcal{G}^{(0)} = X$. A cocycle action $(\alpha,u)$ of $\mathcal{G}$ on a measurable field of factors $(B_{x})_{x\in X}$ is called \textit{centrally free} if for a.e. $x \in X$ and all $g \in \Gamma_{x}$, the automorphism $\alpha_{g}$ is centrally non-trivial. 
\end{definition}

\section{Actions on fields of injective factors}
\label{sectionnoncartan}

In the case of $\textrm{II}_{1}$ factors, it has been shown that any cocycle action of an amenable group can be perturbed to a genuine action in \cite{MR4258165}. In \cite[Theorem 3.1]{MR4124434}, it has been shown that any cocycle action of an amenable discrete measured groupoid on a measurable field of $\textrm{II}_{1}$ factors can be perturbed to a genuine action. We begin this section with a 2-cohomology vanishing result for groupoid actions on fields of infinite factors. For the proof, we use a Packer-Raeburn type stabilization trick as in \cite{MR1002543}. We remark here that such a stabilization trick had already appeared in the literature before \cite{MR1002543}, for example, in \cite[Theorem 4.2.2]{MR0574031}.

\begin{theorem}
\label{2CohomologyVanishingGroupoidsAlternate}
Let $(X, \mu)$ be a standard probability space and $(B_{x})_{x \in X}$ be a field of infinite factors. Let $(\alpha,u)$ be a cocycle action of a discrete measured groupoid $\mathcal{G}$ with unit space $X$ on the field $(B_{x})_{x \in X}$. Then $u$ is a coboundary, i.e., there exists $g \mapsto w_{g} \in \mathcal{U}(B_{t(g)})$, a measurable field of unitaries such that for all $(g,h) \in \mathcal{G}^{(2)}$ we have $u(g,h) = \alpha_{g}(w_{h}^{*})w_{g}^{*}w_{gh}$.
\end{theorem}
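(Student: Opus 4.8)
The plan is to run a Packer--Raeburn stabilization argument, field-wise and equivariantly over the groupoid. The key point is that for an \emph{infinite} factor $B_x$, we have a normal $*$-isomorphism $B_x \cong B_x \otimes \mathcal{B}(\ell^2(\mathbb{N}))$, and more importantly the cocycle $u(g,h) \in \mathcal{U}(B_{t(g)})$, once amplified, can be absorbed into a ``diagonal'' unitary built from left regular–type representations. Concretely, I would first fix a measurable field of isomorphisms $B_x \cong B_x \,\bar\otimes\, \mathcal{B}(\ell^2(\mathbb{N}))$ (this requires only that each $B_x$ is properly infinite, and measurability of the choice follows from the usual measurable selection arguments on $\vn(\mathcal{H})$ as already used in the excerpt). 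The goal then is to write $u(g,h)$, after this amplification, as $\alpha_g(w_h^*)\,w_g^*\,w_{gh}$ for a measurable field of unitaries $w_g \in \mathcal{U}(B_{t(g)})$.

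First I would recall why a $2$-cocycle is the obstruction to the map $g \mapsto u(\mathcal{U})$--style assignment being multiplicative, and set up the Fell-bundle / twisted-crossed-product picture: the cocycle action $(\alpha,u)$ of $\mathcal{G}$ on $(B_x)$ gives a ``twisted'' object, and stabilizing by $\mathcal{B}(\ell^2(\mathbb{N}))$ lets one realize the twist as an honest (untwisted) action on the stabilized field, which is exactly the statement that $u$ becomes a coboundary. The mechanism: regard $\ell^2(\mathbb{N})$ as $\ell^2$ of the (countable) fibers $s^{-1}(x)$ — or rather use a measurable trivialization — and define, for $g \in \mathcal{G}$ with $s(g) = x$, $t(g) = y$, a unitary $w_g \in \mathcal{U}(B_y \,\bar\otimes\, \mathcal{B}(\ell^2))$ whose ``matrix entries'' in the $\mathcal{B}(\ell^2)$ leg are the $\alpha$-translates of the cocycle $u$ along composable pairs. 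The defining identity $u(g,h) = \alpha_g(w_h^*) w_g^* w_{gh}$ is then checked by expanding both sides using the $2$-cocycle identity $\alpha_g(u(h,k))u(g,hk) = u(g,h)u(gh,k)$, which is precisely what makes the matrix computation close up; this is the standard Packer--Raeburn bookkeeping and I would not grind through it.

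The two things that need genuine care — and where I expect the real work to be — are \textbf{measurability} and \textbf{uniformity of the fiber structure}. For measurability: every choice (the amplification isomorphisms $B_x \cong B_x \bar\otimes \mathcal{B}(\ell^2)$, the trivialization of the field $g \mapsto \ell^2(s^{-1}(s(g)))$, the resulting $w_g$) must be made in a jointly measurable way in $g \in \mathcal{G}$, using that $\mathcal{G}$ is standard Borel and the relevant spaces of von Neumann algebras and unitaries carry Borel structures — this is where one invokes measurable selection and the Effros--Borel structure machinery already cited in the preliminaries. For the fiber structure: since $\mathcal{G}$ is discrete measured, $s^{-1}(x)$ is countable but its cardinality need not be constant; I would handle this by partitioning the unit space into the Borel pieces where $\# s^{-1}(x)$ takes each fixed value in $\mathbb{N} \cup \{\infty\}$ and treating each piece separately, or more cleanly by using that infiniteness of $B_x$ already furnishes $\mathcal{B}(\ell^2(\mathbb{N}))$ regardless of fiber cardinality, and embedding the (at most countable) fiber into $\mathbb{N}$ measurably. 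Once the field $(w_g)$ is constructed and the coboundary identity is verified on composable pairs, together with the normalizations $w_g = 1$ for $g \in \mathcal{G}^{(0)}$ (which one can arrange since $u(g,h) = 1$ whenever $g$ or $h$ is a unit), the theorem follows; the perturbed action $\alpha'_g := \Ad(w_g^*) \circ \alpha_g$ is then a genuine action of $\mathcal{G}$ cocycle conjugate to $(\alpha,u)$, which is the intended downstream use.

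I would close the argument by noting that the whole construction is canonical enough to respect the restriction operations and measurable fields, so it is compatible with passing to the ergodic decomposition — a remark that will be used when this lemma is fed into Theorem~\ref{GroupoidActionsClassification}.
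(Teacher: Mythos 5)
Your proposal follows essentially the same route as the paper: stabilize each fibre by $\mathcal{B}(\ell^{2}(\mathcal{G}_{x}))$ using infiniteness of $B_{x}$, and untwist the cocycle with a diagonal unitary $W_{g}=\sum_{r}u_{1}(g,g^{-1}r)\otimes e_{r,r}$ whose entries are translates of the cocycle, the verification closing up precisely via the $2$-cocycle identity, with the measurability issues handled as you indicate. The one step you elide (and which the paper spells out) is that before this diagonal unitary can be used, the amplified action must first be conjugated by unitaries $V_{g}$ --- built from equivalences of projections, a second essential use of infiniteness --- into the split form $\beta_{g}\otimes\Ad(\lambda_{g})$, which is what guarantees the perturbed cocycle takes values in $B_{t(g)}\otimes 1$; this is part of the standard Packer--Raeburn bookkeeping you invoke, so the plan is sound.
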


\begin{proof}
Since for all $x \in X$, $B_{x}$ is separable, we assume without loss of generality that they are represented on the same separable Hilbert space $\mathcal{H}$. Let $\mathcal{G}_{x}$ denote the countable set of groupoid elements $\{g \in \mathcal{G} \; | \; t(g) = x\}$. Since for a.e. $x \in X$, $B_{x}$ is infinite, let $(\phi_{x})_{x \in X}$ denote a family of *-isomorphisms between $B_{x}$ and $B_{x} \otimes \mathcal{B}(l^{2}(\mathcal{G}_{x}))$. Thus for a.e. $g \in \mathcal{G}$, we have a *-isomorphism  
\begin{equation}
    \phi_{t(g)} \circ \alpha_{g} \circ \phi_{s(g)}^{-1}: B_{s(g)} \otimes \mathcal{B}(l^{2}(\mathcal{G}_{s(g)})) \rightarrow B_{t(g)} \otimes \mathcal{B}(l^{2}(\mathcal{G}_{t(g)})) \nonumber
\end{equation}  

For convenience of notation, we denote $D_{x} = B_{x} \otimes \mathcal{B}(l^{2}(\mathcal{G}_{x}))$, $\alpha'_{g} = \phi_{t(g)}\circ \alpha_{g} \circ \phi_{s(g)}^{-1}$. Now in each $\mathcal{B}(l^{2}(\mathcal{G}_{x}))$ we denote the matrix units by $e^{x}_{g,h}$ for $g,h \in \mathcal{G}_{x}$. For each $g \in \mathcal{G}$, $B_{t(g)}$ is infinite and hence the projections $1 \otimes e^{t(g)}_{g,g}$ and $\alpha'_{g}(1 \otimes e^{s(g)}_{1,1})$ are equivalent, where $e^{x}_{1,1}$ denotes the diagonal matrix unit corresponding to the identity element $1 \in \mathcal{G}_{x}$. We now pick partial isometries $w_{g} \in D_{t(g)}$ such that $w_{g}^{*}w_{g} = 1 \otimes e^{t(g)}_{g,g}$ and $w_{g}w_{g}^{*} = \alpha'_{g}(1 \otimes e^{s(g)}_{1,1})$. Now for $g \in \mathcal{G}$, we consider the element \begin{equation}
    D_{t(g)} \ni V_{g} \coloneqq \sum_{k \in \mathcal{G}_{s(g)}} \alpha'_{g}(1 \otimes e^{s(g)}_{k,1})w_{g}(1 \otimes e^{t(g)}_{g,gk}) \nonumber
\end{equation}
A simple computation shows that $\alpha_{g}'(1 \otimes e^{s(g)}_{k,1})w_{g}(1 \otimes e^{t(g)}_{g,gk})$ is a partial isometry with source projection $1 \otimes e^{t(g)}_{gk,gk}$ and range projection $\alpha'_{g}(1 \otimes e^{s(g)}_{k,k})$. Hence we have that $V_{g}$ is the sum of partial isometries with orthogonal sources and ranges, and hence $V_{g} \in \mathcal{U}(D_{t(g)})$. Now for $k,l \in \mathcal{G}_{s(g)}$, we can check that $\alpha'_{g}(1 \otimes e^{s(g)}_{k,l})= \Ad(V_{g})(1 \otimes e^{t(g)}_{gk,gl})$.

Let us denote by $(\lambda_{g})_{g \in \mathcal{G}}$ the field of unitary operators $\lambda_{g}: l^{2}(\mathcal{G}_{s(g)}) \rightarrow l^{2}(\mathcal{G}_{t(g)})$ given by $\lambda_{g}(\delta_{h}) = \delta_{gh}$. Since we have that $\Ad(V_{g}^{*})\circ \alpha'_{g}(1 \otimes e^{s(g)}_{k,l}) = 1 \otimes e^{t(g)}_{gk,gl}$ for all $g \in \mathcal{G}$ and for all $k, l \in \mathcal{G}_{s(g)}$,
we can conclude that the action $\Ad(V_{g}^{*}) \circ \alpha'_{g}$ is of the form $\beta_{g} \otimes Ad (\lambda_{g})$ where $\beta_{g}: B_{s(g)} \rightarrow B_{t(g)}$ is an action of $\mathcal{G}$ on the field $(B_{x})_{x \in X}$. We have thus constructed unitaries $V_{g} \in \mathcal{U}(D_{t(g)})$ such that 
\begin{equation}
    \alpha'_{g} = \Ad(V_{g})\circ (\beta_{g} \otimes \Ad(\lambda_{g})) \nonumber
\end{equation}
Let $\sigma_{g}\coloneqq \beta_{g} \otimes \Ad(\lambda_{g})$ and define the 2-cocycle 
\begin{equation}
    u'(g,h) = V_{g}^{*}\alpha'_{g}(V_{h}^{*})\phi_{t(g)}(u(g,h)))V_{gh} \nonumber
\end{equation}
Hence $(\sigma,u')$ defines a cocycle action of $\mathcal{G}$ on the field of factors $(D_{x})_{x \in X}$. Since we have that $\lambda_{g} \circ \lambda_{h} = \lambda_{gh}$, the unitaries $u'(g,h)$ are of the form $u_{1}(g,h) \otimes 1$ with $u_{1}(g,h) \in \mathcal{U}(B_{t(g)})$. For $g \in \mathcal{G}$ we now define the unitaries: 
\begin{equation}
    \mathcal{U}(D_{t(g)}) \ni W_{g} = \sum_{r \in \mathcal{G}_{t(g)}}u_{1}(g,g^{-1}r) \otimes e^{t(g)}_{r,r} \nonumber
\end{equation} 
Now we shall denote
$C_{g,h} = u'(g,h)^{*}\sigma_{g}(W_{h})W_{g}$ and check that $C_{g,h} = W_{gh}$.  

\begin{equation}
    \begin{split}
        C_{g,h} &= ( u_{1}(g,h)^{*} \otimes 1)(\sum_{r \in \mathcal{G}_{(t(h))}}\beta_{g}(u_{1}(h,h^{-1}r)) \otimes \Ad(\lambda_{g})(e^{t(h)}_{r,r}))(\sum_{s \in \mathcal{G}_{t(g)}}u_{1}(g,g^{-1}s) \otimes e^{t(g)}_{s,s}) \\
        &= ( u_{1}(g,h)^{*} \otimes 1)(\sum_{r \in \mathcal{G}_{(t(h))}}\beta_{g}(u_{1}(h,h^{-1}r)) \otimes e^{t(g)}_{gr,gr})(\sum_{s \in \mathcal{G}_{t(g)}}u_{1}(g,g^{-1}s) \otimes e^{t(g)}_{s,s}) \\
        &= \sum_{r \in \mathcal{G}_{(t(h))}}u_{1}(g,h)^{*}\beta_{g}(u_{1}(h,h^{-1}r))u_{1}(g,r) \otimes e^{t(g)}_{gr,gr} \\
        &= \sum_{r \in \mathcal{G}_{(t(h))}} u_{1}(gh,h^{-1}r) \otimes e^{t(g)}_{gr,gr} = \sum_{s \in \mathcal{G}_{t(g)}}u_{1}(gh,h^{-1}g^{-1}s) \otimes e^{t(g)}_{s,s} = W_{gh} 
    \end{split} \nonumber
\end{equation}

Hence $W_{g}$ is a 1 cocycle that perturbs $\sigma$ to a genuine action. Now we consider the unitaries $Z_{g} = \phi_{t(g)}^{-1}(V_{g}W_{g})$ which satisfies $\alpha_{g}(Z_{h})Z_{g} = u(g,h)Z_{gh}$ as required.
\end{proof}

For a direct integral $B = \int^{\oplus}_{x\in X} B_{x}$ of factors, we denote by $\widetilde{B}$ the direct integral of the canonical cores (not necessarily factors anymore) $\int^{\oplus}_{x \in X}\widetilde{B}_{x}$. The following lemma establishes a condition on a regular inclusion, that is equivalent to the central freeness of the corresponding groupoid action.  

\begin{lemma}
\label{CentrallyFreeCondition}
Let $M$ be an injective factor of type $\textrm{III}$ and let $B \subset M$ be a regular von Neumann subalgebra with a faithful normal conditional expectation $E: M \rightarrow B$ satisfying $\mathcal{Z}(B) = B' \cap M$. The resulting cocycle action $(\alpha,u)$ of a discrete measured groupoid $\mathcal{G}$ on $B$ is centrally free if and only if $\mathcal{Z}(\widetilde{B}) = (\widetilde{B})'\cap \widetilde{M}$.
\end{lemma}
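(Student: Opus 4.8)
The plan is to realise the inclusion $\widetilde{B}\subset\widetilde{M}$ as a groupoid cocycle crossed product of the \emph{continuous cores} of the fibres and then read off its relative commutant, the point being that central freeness of $(\alpha,u)$ on $B$ is exactly properness of outerness of the extended action on the field $(\widetilde{B}_{x})_{x\in X}$.

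\textbf{Step 1 (passing to the cores).} First I would fix a faithful normal state $\phi_{0}$ on $B$ and set $\phi=\phi_{0}\circ E$. By Takesaki's theorem $\sigma^{\phi}_{t}(B)=B$, $\sigma^{\phi}_{t}|_{B}=\sigma^{\phi_{0}}_{t}$ and $E\circ\sigma^{\phi}_{t}=\sigma^{\phi}_{t}\circ E$, so inside $\widetilde{M}=M\rtimes_{\sigma^{\phi}}\mathbb{R}$ the von Neumann algebra generated by $B$ and the canonical unitaries $(\lambda_{t})_{t}$ is $\widetilde{B}=B\rtimes_{\sigma^{\phi_{0}}}\mathbb{R}=\int^{\oplus}_{x\in X}\widetilde{B}_{x}$, the dual flow $\theta$ on $\widetilde{M}$ restricts to the dual flow on $\widetilde{B}$, and $E$ extends to a faithful normal conditional expectation $\widetilde{E}\colon\widetilde{M}\to\widetilde{B}$. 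Writing $M=B\rtimes_{(\alpha,u)}\mathcal{G}$ with the canonical normalizing partial isometries $u(\mathcal{U})$, $\mathcal{U}\in[[\mathcal{G}]]$, I would use $E(u(\mathcal{U}))=1_{\mathcal{U}\cap\mathcal{G}^{(0)}}$ (hence $u(\mathcal{U})$ commutes with $E$) together with a standard Connes-cocycle computation giving $u(\mathcal{U})\,\sigma^{\phi}_{t}(u(\mathcal{U})^{*})\in B$ to conclude that each $u(\mathcal{U})$ normalizes $\widetilde{B}$. Consequently $\alpha$ extends canonically to a cocycle action $(\widetilde{\alpha},\widetilde{u})$ of $\mathcal{G}$ on the field of continuous cores $(\widetilde{B}_{x})_{x\in X}$, with $\widetilde{M}=(\widetilde{B}_{x})_{x\in X}\rtimes_{(\widetilde{\alpha},\widetilde{u})}\mathcal{G}$, where $\widetilde{\alpha}_{g}$ is the usual lift of $\alpha_{g}\colon B_{s(g)}\to B_{t(g)}$ to the cores, commuting with $\theta$.

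\textbf{Step 2 (computing the relative commutant).} Expanding $y\in(\widetilde{B})'\cap\widetilde{M}$ along the groupoid, $y=\sum_{g\in\mathcal{G}}y_{g}$ with $y_{g}\in\widetilde{B}_{t(g)}$, the requirement that $y$ commute with $\widetilde{B}$ kills $y_{g}$ unless $s(g)=t(g)$ and, for $g\in\Gamma_{x}$, forces $b\,y_{g}=y_{g}\,\widetilde{\alpha}_{g}(b)$ for all $b\in\widetilde{B}_{x}$; the term $g=\mathrm{id}_{x}$ contributes exactly $\mathcal{Z}(\widetilde{B}_{x})$. Hence, modulo a routine measurable selection to pass between the a.e.\ statement and the global one, $(\widetilde{B})'\cap\widetilde{M}=\mathcal{Z}(\widetilde{B})$ if and only if for a.e.\ $x\in X$ and every $g\in\Gamma_{x}\setminus\{\mathrm{id}_{x}\}$ the automorphism $\widetilde{\alpha}_{g}$ is properly outer (free) on $\widetilde{B}_{x}$ (this is the non-factorial analogue of the remark, made just before this lemma in the text, that $B'\cap M\cong L^{\infty}(\mathcal{G}^{(0)})$ iff the fibres are factors and the action is free).

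\textbf{Step 3 (matching with central freeness).} It then remains to show, for a.e.\ $x$ and $g\in\Gamma_{x}\setminus\{\mathrm{id}_{x}\}$, that $\widetilde{\alpha}_{g}$ is properly outer on $\widetilde{B}_{x}$ iff $\alpha_{g}$ is centrally non-trivial on $B_{x}$. One direction is the fact recalled just before the definition of central freeness: if $\alpha_{g}$ is centrally trivial then, by \cite[Theorem~1]{MR1179014} and \cite[Proposition~5.4]{MR1064693}, $\widetilde{\alpha}_{g}$ is inner on $\widetilde{B}_{x}$, so not properly outer. For the converse I would argue that here ``not properly outer'' upgrades to ``inner'': if $0\neq v\in\widetilde{B}_{x}$ satisfies $bv=v\widetilde{\alpha}_{g}(b)$ for all $b$, then since elements of $\widetilde{B}_{x}=\int^{\oplus}_{z}(\widetilde{B}_{x})_{z}$ preserve the direct-integral fibres over $\mathcal{Z}(\widetilde{B}_{x})$, the source and range projections of $v$ must coincide with a nonzero $\widetilde{\alpha}_{g}$-invariant central projection $p$ on which $\widetilde{\alpha}_{g}$ acts trivially and innerly; as $\widetilde{\alpha}_{g}$ commutes with the trace-scaling flow $\theta_{x}$, which is ergodic on $\mathcal{Z}(\widetilde{B}_{x})$ because $B_{x}$ is a factor, propagating $p$ and the implementing unitary by $\theta_{x}$ forces $\modnew(\alpha_{g})=\mathrm{id}$ and $\widetilde{\alpha}_{g}$ inner on all of $\widetilde{B}_{x}$, whence $\alpha_{g}\in\Cnt_{r}(B_{x})\subset\Cnt(B_{x})$, i.e.\ $\alpha_{g}$ is centrally trivial. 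Combining both directions with Step 2 yields precisely $(\widetilde{B})'\cap\widetilde{M}=\mathcal{Z}(\widetilde{B})\iff(\alpha,u)$ is centrally free. (I would treat $B_{x}$ of type $\mathrm{III}$ for a.e.\ $x$, the semifinite and discrete cases being analogous and simpler since the core then has trivial flow.)

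\textbf{Main obstacle.} The delicate points are in Step~1 — verifying carefully that the continuous core of the crossed product is again a crossed product of the cores by the \emph{same} groupoid $\mathcal{G}$ (that the canonical normalizing partial isometries persist and that the unit space is not enlarged) — and the measurable-selection bookkeeping threading through all three steps; the genuinely structural (representation-theoretic) input in Step~3 is entirely supplied by \cite{MR1179014} and \cite{MR1064693}, and the ``not properly outer $\Rightarrow$ inner'' upgrade is a short ergodicity argument.
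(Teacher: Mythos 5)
Your proposal is correct and follows essentially the same route as the paper's proof: identifying $\widetilde{M}$ with the cocycle crossed product of the field of cores $(\widetilde{B}_{x})_{x\in X}$ by the same groupoid $\mathcal{G}$, reducing the relative commutant condition to proper outerness of $\widetilde{\alpha}_{g}$ on the fibres, upgrading ``not properly outer'' to ``inner'' via the ergodicity of the trace-scaling flow on $\mathcal{Z}(\widetilde{B}_{x})$, and invoking \cite[Theorem 1]{MR1179014} and \cite[Proposition 5.4]{MR1064693} to identify innerness of $\widetilde{\alpha}_{g}$ with central triviality of $\alpha_{g}$. The only cosmetic difference is that the paper proves the crossed-product identification of the cores by an explicit unitary intertwining of Hilbert-space representations rather than by checking that the partial isometries $u(\mathcal{U})$ normalize $\widetilde{B}$.
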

\begin{proof}
Claim 1: $\widetilde{M}$ is canonically isomorphic to $\widetilde{B} \rtimes_{(\tilde{\alpha},u)} \mathcal{G}$. To prove this, let us assume that $B \subset M \subseteq \mathcal{B}(\mathcal{H})$ and let $\phi$ be a faithful normal state on $B$. We define now a faithful normal state $\psi$ on $M$ by $\psi = \phi \circ E$. Suppose that $\mathcal{Z}(B) = L^{\infty}(X, \mu)$ and let $B = \int_{x \in X}^{\oplus}B_{x}$ be the ergodic decomposition into factors. We can choose separable Hilbert spaces $\mathcal{H}_{x}$ such that $B_{x} \subseteq \mathcal{B}(\mathcal{H}_{x})$ and faithful normal states $\phi_{x}$ on $B_{x}$ for all $x \in X$ such that: 
\begin{itemize}
    \item The two faithful normal states $\phi$ and $\int_{x \in X} \phi_{x}$ are equal.
    \item The two Hilbert spaces $\mathcal{H}$ and $\int^{\oplus}_{x \in X} \mathcal{H}_{x}$ are isomorphic.
\end{itemize}

Next, we define the Hilbert spaces $\mathcal{K}_{x} = \mathcal{H}_{x} \otimes L^{2}(\mathbb{R})$, $\mathcal{K} = \mathcal{H} \otimes L^{2}(\mathbb{R})$ and $\mathcal{K}' = \int^{\oplus}_{x} \mathcal{K}_{x}$. Consider the natural Hilbert space isomorphism $u: \mathcal{K} \rightarrow \mathcal{K}'$ that sends an element of the form $(\xi_{x})_{x \in X} \otimes f \in (\int^{\oplus}_{x}\mathcal{H}_{x}) \otimes L^{2}(\mathbb{R})$ to $(\xi_{x} \otimes f)_{x \in X} \in \int^{\oplus}_{x}(\mathcal{H}_{x} \otimes L^{2}(\mathbb{R}))$. Clearly we have that $uu^{*} = u^{*}u = 1$. We represent $\widetilde{M} \subset \mathcal{B}(\mathcal{K})$ as the SOT-closure of the subalgebra generated by the elements of $B$ of the form $\{(b_{x})_{x \in X} \otimes 1\ \; | \; b_{x} \in B_{x}\}$, the partial isometries $\{u(\mathcal{U}) \otimes 1 \; | \; \mathcal{U} \in [[\mathcal{G}]]\}$, and the canonical unitaries $\{u^{\phi}_{t} \; |\; t \in \mathbb{R}\}$ that satisfy 

\begin{equation}
 u^{\phi}_{t}(bu(\mathcal{U}))u^{\phi*}_{t} = \sigma^{\phi}_{t}(bu(\mathcal{U})) \nonumber 
\end{equation}

On the other hand, $\widetilde{B} \rtimes_{(\tilde{\alpha},u)} \mathcal{G} \subset B(\mathcal{K}')$ can be represented as the SOT-closure of the subalgebra generated by the elements corresponding to the subalgebra $B$: $\{(b_{x} \otimes 1)_{x \in X} \; | \; b_{x} \in B_{x}\}$, the partial isometries $\{\tilde{u}(\mathcal{U}) \; | \; \mathcal{U} \in [[\mathcal{G}]]\}$ for the action $\tilde{\alpha}$ of $\mathcal{G}$ on $\int_{x \in X}^{\oplus}\tilde{B_{x}}$, and the unitaries $\{(u^{\phi_{x}}_{t})_{x \in X} \; | \; t \in \mathbb{R}\}$ which satisfy for $\mathcal{U} \in [[\mathcal{G}]]$: 
\begin{equation}
    \tilde{u}(\mathcal{U})(b_{x}u^{\phi_{x}}_{t})_{x \in s(\mathcal{U})}\tilde{u}(\mathcal{U})^{*} = \tilde{\alpha}_{\mathcal{U}}((b_{x}u^{\phi_{x}}_{t})_{x \in s(\mathcal{U})}) \nonumber
\end{equation} 

We define a map $F: \mathcal{B}(\mathcal{K}) \rightarrow \mathcal{B}(\mathcal{K}')$ by $F(x) = uxu^{*}$. It is easy to check that the map $F$ satisfies the following: 
\begin{itemize}
    \item $F((b_{x})_{x\in X} \otimes 1) = (b_{x} \otimes 1)_{x \in X}$ for $b_{x} \in \mathcal{B}_{x}$,
    \item $F(u(\mathcal{U}) \otimes 1) = \tilde{u}(\mathcal{U})$ for $u \in [[\mathcal{G}]]$,
    \item $F(u^{\phi}_{t}) = (u^{\phi_{x}}_{t})_{x \in X}$ for all $t \in \mathbb{R}$
\end{itemize}

Since $F$ is given by a unitary conjugation and sends generators to generators, $F$ gives an isomorphism between the von Neumann algebras $\widetilde{M}$ and $\widetilde{B} \rtimes_{(\tilde{\alpha},u)} \mathcal{G}$ thus proving our claim. 

Claim 2: Given an automorphism $\alpha \in \Aut(B_{x})$, the automorphism $\Tilde{\alpha} \in \Aut(\widetilde{B_{x}})$ is properly outer if and only if $\Tilde{\alpha}$ is outer. Since proper outerness clearly implies outerness, we prove here the other direction. Let us assume first that $\Tilde{\alpha}$ is not properly outer, i.e., there exists $v \in \widetilde{B}_{x}$ such that $\Tilde{\alpha}(b)v = vb$ for all $b \in \widetilde{B}_{x}$. Let $v = w|v|$ be the polar decomposition of $v$ such that $w$ is a partial isometry. We have that the projections $w^{*}w$ and $ww^{*}$ are central, and we denote $p = w^{*}w = ww^{*} \in \mathcal{Z}(\widetilde{B}_{x})$. Assuming that $w \neq 0$, we will show that $\Tilde{\alpha}$ is inner. First we note that $\tilde{\alpha}(b)w = wb$ for all $b \in \widetilde{B}_{x}$. Let $(\theta_{t})_{t \in \mathbb
R}$ be the trace scaling action of $\mathbb{R}$ on $\widetilde{B}_{x}$ which commutes with $\Tilde{\alpha}$ and acts ergodically on the center $\mathcal{Z}(\widetilde{B}_{x})$. Since the action is ergodic, we can find a sequence of real numbers $t_{n} \in \mathbb{R}$ and a sequence of projections $p_{n} \in \mathcal{Z}(\widetilde{B}_{x})$ such that $p_{n}  \leq p$ for all $n$ and
\begin{equation}
    \sum_{n=0}^{\infty} \theta_{t_{n}}(p_{n}) = 1.  \nonumber
\end{equation}
As $\theta$ commutes with $\Tilde{\alpha}$, we have that $\Tilde{\alpha}(b)\theta_{t}(w) = \theta_{t}(w)b$ for all $t \in \mathbb{R}$ and $b \in \widetilde{B}_{x}$. We consider now the element $W = \sum_{n=0}^{\infty}\theta_{t_{n}}(wp_{n})$and check that $W^{*}W = \sum_{n=0}^{\infty}\theta_{t_{n}}(p_{n}w^{*}wp_{n}) = \sum_{n=0}^{\infty}\theta_{t_{n}}(p_{n})  = 1$ and $WW^{*} = \sum_{n=0}^{\infty}\theta_{t_{n}}(wp_{n}w^{*}) = \sum_{n=0}^{\infty}\theta_{t_{n}}(p_{n}) = 1$. Now, for an arbitrary $b \in \widetilde{B}_{x}$, letting $b_{n} = b\theta_{t_{n}}(p_{n})$, we check that:
\begin{equation}
    \begin{split}
        Wb &= \sum_{n=0}^{\infty}\theta_{t_{n}}(w)\theta_{t_{n}}(p_{n})b = \sum_{n=0}^{\infty}\tilde{\alpha}(\theta_{t_{n}}(p_{n}))\tilde{\alpha}(b)\theta_{t_{n}}(w) \\
        &= \sum_{n=0}^{\infty} \tilde{\alpha}(b)\tilde{\alpha}(\theta_{t_{n}}(p_{n}))
        \theta_{t_{n}}(w) = \sum_{n=0}^{\infty} \tilde{\alpha}(b)\theta_{t_{n}}(\tilde{\alpha}(p_{n}))\theta_{t_{n}}(w) \\ &= \sum_{n=0}^{\infty}\tilde{\alpha}(b)\theta_{t_{n}}(wp_{n}) = \tilde{\alpha}(b)W \nonumber 
    \end{split} 
\end{equation}

Now it can be checked that for $(\widetilde{B})' \cap \widetilde{B} \rtimes_{(\alpha,u)} \mathcal{G} = \mathcal{Z}(B)$ if and only if for a.e. $x \in X$ and for all $g \in \Gamma_{x} \backslash \{e\}$, $\Tilde{\alpha_{g}} \in \Aut(\Tilde{B_{x}})$ is properly outer. By Claim 1 and Claim 2, we then have that $(\widetilde{B})' \cap \widetilde{M} = \mathcal{Z}(\widetilde{B})$
if and only if for a.e. $x \in X$ and for all $g \in \Gamma_{x} \backslash \{e\}$, $\Tilde{\alpha}_{g} \in \Aut(\widetilde{B_{x}})$ is outer, and by the equivalence between outerness of $\Tilde{\alpha}$ and central freeness of $\alpha$, we have the desired conclusion. 
\end{proof}

We shall now present a new proof of the classification theorem for centrally free actions of amenable discrete measured groupoids on fields of injective factors. This has already been proved in further generality, for (not necessarily centrally) free actions by Masuda in \cite{MR4484234}. We first need the following lemma, essentially due to Ocneanu on approximate innerness of cocycle self conjugacies. Our formulation is very similar to  \cite[Lemma 3.2]{MR4484234} but several similar formulations have appeared in the literature before, for example in \cite{MR766264}, \cite{MR842413}, \cite{MR1156672}. In what follows, we shall follow the same terminology as in \cite[Section 5.1]{MR807949}. In particular, we shall denote for a von Neumann algebra $M$ and a free ultrafilter $\omega$ on $\mathbb{N}$, the Ocneanu ultrapower, and the $\omega$-centralizing sequence algebra by $M^{\omega}$ and $M_{\omega}$ respectively.  

\begin{lemma}
\label{CocycleSelfConjugacyNonCartan}
Let $B$ be an injective factor, $G$ be a countable amenable group, and $\alpha$ be a centrally free action of $G$ on $B$. Suppose we have $\theta \in \Aut(B)$ such that $ \modnew(\theta) = 1$ and $v_{g} \in \mathcal{U}(B)$ a 1-cocycle for $\alpha$ that satisfies $\theta \circ \alpha_{g} \circ \theta^{-1} = Ad (v_{g}) \circ \alpha_{g}$ for all $g \in G$. Then there exists a sequence $u_{n} \in \mathcal{U}(B)$ such that
\begin{equation}
    \Ad (u_{n}) \rightarrow \theta \; \text{and} \; \lim_{n} ||u_{n}^{*}v_{g}\alpha_{g}(u_{n}) - 1||_{\phi}^{\#} = 0 \; \text{for all} \; g \in G \nonumber
\end{equation}
\end{lemma}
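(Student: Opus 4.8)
The plan is to transfer everything into the Ocneanu ultrapower $B^{\omega}$ (with respect to a fixed faithful normal state $\phi$), produce a genuine $1$-cocycle with values in $\mathcal{U}(B_{\omega})$ that encodes the obstruction, kill it using Ocneanu's $1$-cohomology vanishing theorem \cite[Proposition 7.2]{MR807949}, and then absorb the correcting unitary into the unitary implementing $\theta$. We may assume $B$ is of type $\textrm{III}$: a type $\textrm{I}$ factor admits no centrally free action of a nontrivial group, and in the type $\textrm{II}$ cases $\modnew(\theta)=1$ holds automatically and the argument goes through unchanged; in particular $B$ is McDuff, $B\cong B\,\overline{\otimes}\,R$.

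First I would lift $\theta$. Since $\modnew(\theta)=1$, the structure theory of injective factors (Connes \cite{MR454659}; Kawahigashi--Sutherland--Takesaki \cite{MR1179014}, cf. the splitting of the $\modnew$-sequence recalled above) gives $\theta\in\overline{\Int(B)}$, hence $\theta$ is implemented by a unitary $U\in\mathcal{U}(B^{\omega})$, i.e. $\Ad(U)|_{B}=\theta$. Recall that $B'\cap B^{\omega}=B_{\omega}$ because $B$ is a factor, and that $\alpha$ extends to an action $\alpha^{\omega}$ of $G$ on $B^{\omega}$ leaving $B$, and hence $B_{\omega}$, globally invariant. Set $\beta_{g}:=\Ad(v_{g})\circ\alpha_{g}$; since $v$ is a $1$-cocycle for $\alpha$, $\beta$ is a genuine action of $G$, it is again centrally free (inner automorphisms being centrally trivial), and $\theta$ conjugates $\alpha$ to $\beta$.

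Next I would extract and annihilate the obstruction. Because $\Ad(U)|_{B}=\theta$ and $\theta\alpha_{g}\theta^{-1}=\beta_{g}$, the automorphism $\Ad(U)\,\alpha_{g}^{\omega}\,\Ad(U)^{*}\,(\beta_{g}^{\omega})^{-1}$ of $B^{\omega}$ fixes $B$ pointwise; writing it out using $\beta_{g}^{\omega}=\Ad(v_{g})\alpha_{g}^{\omega}$ shows that $w_{g}:=U\,\alpha_{g}^{\omega}(U)^{*}\,v_{g}^{*}$ lies in $B'\cap B^{\omega}=\mathcal{U}(B_{\omega})$, and a direct computation from $v_{gh}=v_{g}\alpha_{g}(v_{h})$ together with the multiplicativity of $\alpha^{\omega}$ yields the cocycle identity $w_{gh}=w_{g}\,\beta_{g}^{\omega}(w_{h})$, i.e. $(w_{g})\in Z^{1}(\beta^{\omega}|_{B_{\omega}},\mathcal{U}(B_{\omega}))$. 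Applying Ocneanu's $1$-cohomology vanishing theorem \cite[Proposition 7.2]{MR807949} to the centrally free action $\beta$ of the amenable group $G$ on the McDuff factor $B$, there is $Z\in\mathcal{U}(B_{\omega})$ with $w_{g}=Z^{*}\beta_{g}^{\omega}(Z)$. Put $U':=ZU$. Since $Z\in B'\cap B^{\omega}$ we still have $\Ad(U')|_{B}=\theta$, and substituting back one checks $v_{g}=U'\,\alpha_{g}^{\omega}(U')^{*}$, hence $(U')^{*}v_{g}\,\alpha_{g}^{\omega}(U')=1$ for all $g\in G$. Representing $U'=(u_{n})^{\omega}$ with $u_{n}\in\mathcal{U}(B)$, these two identities become $\lim_{n\to\omega}\|u_{n}xu_{n}^{*}-\theta(x)\|_{\phi}^{\#}=0$ for all $x\in B$ and $\lim_{n\to\omega}\|u_{n}^{*}v_{g}\,\alpha_{g}(u_{n})-1\|_{\phi}^{\#}=0$ for all $g$; since $G$ is countable and $B$ has separable predual, a diagonal extraction over a countable $\|\cdot\|_{\phi}^{\#}$-dense subset of $B$ and over $G$ produces a genuine sequence with $\Ad(u_{n})\to\theta$ in the $u$-topology and $\lim_{n}\|u_{n}^{*}v_{g}\,\alpha_{g}(u_{n})-1\|_{\phi}^{\#}=0$ for every $g\in G$, as required.

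I expect the main obstacle to be the middle step: one must check carefully that the natural defect unitary $w_{g}$ genuinely lands in $B_{\omega}$ — this is exactly where the identification $B'\cap B^{\omega}=B_{\omega}$ and the $\alpha^{\omega}$- and $\beta^{\omega}$-invariance of $B_{\omega}$ enter — and that the resulting cocycle relation is with respect to $\beta^{\omega}$ (not $\alpha^{\omega}$), so that Ocneanu's vanishing theorem applies; it is the hypothesis that $\alpha$ is centrally free that transfers to central freeness of $\beta$ and thereby forces the cohomology to vanish. By contrast, the hypothesis $\modnew(\theta)=1$ is used only to obtain the implementing unitary $U$ at the outset, but it is indispensable there: without it $\theta$ need not be approximately inner and the argument cannot start.
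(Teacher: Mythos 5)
Your proof is correct and follows essentially the same route as the paper: use $\modnew(\theta)=1$ to realize $\theta$ as $\Ad$ of a unitary in the Ocneanu ultrapower $B^{\omega}$, package the defect as a $1$-cocycle with values in $\mathcal{U}(B_{\omega})$, kill it with Ocneanu's vanishing theorem \cite[Proposition 7.2]{MR807949}, and reindex. The only (immaterial) difference is that you write the obstruction as a cocycle for the perturbed action $\beta_{g}=\Ad(v_{g})\circ\alpha_{g}$ and correct by left multiplication, whereas the paper forms the conjugate cocycle $V_{g}=W^{*}v_{g}\alpha_{g}(W)$ for $\alpha$ itself and sets $u_{n}=w_{n}z_{n}$.
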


\begin{proof}
Since $\theta$ is trivial on the flow of weights, by \cite[Theorem 1]{MR1179014}, $\theta \in \overline{\Int}(B)$. Hence we have a sequence $w_{n} \in \mathcal{U}(B)$ such that $\Ad(w_{n})$ converges to $\theta$ in the usual topology in $\Aut(B)$. For a faithful normal state $\phi$, we then have that: 
\begin{equation}
    \| \phi \circ \Ad(w_{n}) - \phi \circ \theta \| \rightarrow 0 \nonumber 
\end{equation}
We fix now a free ultrafilter $\omega$ on $\mathbb{N}$. Clearly the above equation also converges as $n \rightarrow \omega$. Now note that as $n \rightarrow \omega$, $\|w_{n}\|_{\phi}^{\#} \nrightarrow 0$, i.e., $w_{n}$ is not a trivial sequence. Now let $x_{n}$ be a sequence such that $\|x_{n}\|^{\#}_{\phi} \rightarrow 0$ and note that as $n \rightarrow \omega$, we have that
\begin{equation}
    \phi(\Ad(w_{n})(x_{n}x_{n}^{*})) + \phi(\Ad(w_{n})(x_{n}^{*}x_{n})) \rightarrow \phi(\theta(x_{n}x_{n}^{*})) + \phi(\theta(x_{n}^{*}x_{n})) \rightarrow 0 \nonumber
\end{equation}
as $\phi \circ \theta$ is also a faithful normal state on $B$. Hence the sequence $(w_{n})$ normalizes the trivial sequences and hence $W \coloneqq (w_{n}) \in B^{\omega}$. Since $\Ad(w_{n}) \circ \alpha_{g} \circ \Ad(w_{n}^{*}) \rightarrow \Ad(v_{g}) \circ \alpha_{g}$ in $\Aut(B)$, we have that
\begin{equation}
||\phi \circ (\Ad(w_{n}) \circ \alpha_{g} \circ \Ad(w_{n}^{*})) - \phi \circ (\Ad(v_{g}) \circ \alpha_{g})|| \rightarrow 0 \text{ as } n \rightarrow \omega  \nonumber
\end{equation}  
We can check from this that we have: 
\begin{equation}
    ||[\phi,w_{n}^{*}v_{g}\alpha_{g}(w_{n})]|| \rightarrow 0 \text{ as } n \rightarrow \omega \nonumber
\end{equation}
Denoting still the induced action of $G$ on $B_{\omega}$ by $\alpha$, we note from the previous equation that the element $V_{g} = W^{*}v_{g}\alpha_{g}(W) \in \mathcal{U}(B_{\omega})$. Next, we check that: 
\begin{equation}
    \begin{split}
        V(g)\alpha_{g}(V(h))V(gh)^{*} &= W^{*}v_{g}\alpha_{g}(W)\alpha_{g}(W^{*}v_{h}\alpha_{h}(W))V(gh)^{*} \\
        &= W^{*}v_{g}\alpha_{g}(v_{h})\alpha_{gh}(W)V(gh)^{*} \\
        &= W^{*}v_{g}\alpha_{g}(v_{h})\alpha_{gh}(W)\alpha_{gh}(W)^{*}v_{gh}^{*}W\\
        &= W^{*}v_{g}\alpha_{g}(v_{h})v_{gh}^{*}W \\
        &= 1 \nonumber
    \end{split}
\end{equation}
and hence $V$ is a 1-cocycle for the action $\alpha$ of $G$ on $B_{\omega}$. Clearly, $\alpha$ is still a centrally free action on $B^{\omega}$. By the final remark in \cite[Pg. 42]{MR807949}, centrally free actions are strongly free and semiliftable (we refer the reader to \cite[Section 5.2]{MR807949} for the appropriate definitions) and thus satisfy the hypotheses of Ocneanu's one cohomology vanishing result as in \cite[Proposition 7.2]{MR807949}. Using the proposition, there exists $Z \in \mathcal{U}(B_{\omega})$ such that $Z^{*}V_{g}\alpha_{g}(Z) = 1$. 

As in \cite[Proposition 1.1.3]{MR394228}, we can choose a representative sequence $(z_{n})$ for the unitary $Z \in \mathcal{U}(B_{\omega})$ with $z_{n} \in \mathcal{U}(B)$. Since $Z$ is $\omega$-centralizing, we have that $\Ad(z_{n}) \rightarrow \id$ as $n \rightarrow \omega$. Now we define a sequence of unitaries $u_{n} \in \mathcal{U}(B)$ by $u_{n} \coloneqq w_{n}z_{n}$. Then we have that
\begin{equation}
    \lim_{n \rightarrow \omega}\Ad(u_{n}) = \lim_{n \rightarrow \omega}\Ad(w_{n}) \circ \Ad(z_{n}) = \theta \nonumber
\end{equation}
Now note that in $B_{\omega}$, we have that $Z^{*}W^{*}v_{g}\alpha_{g}(WZ) = 1$ and hence we have that $u_{n}^{*}v_{g}\alpha_{g}(u_{n}) \rightarrow 1$ as $n \rightarrow \omega$ as required. The u-topology on $\Aut(B)$ and the topology of pointwise convergence on the space of 1-cocycles are both metrizable, hence we can pass to a subsequence and assume that the conclusion of the theorem remains valid as $n \rightarrow \infty$, which completes our proof. 
\end{proof}

We shall now prove the classification theorem for centrally free actions of a discrete measured amenable  groupoid on a measurable field of injective type $\textrm{III}$ factors. At the heart of the proof is a cohomology lemma proved in \cite[Theorem 3.5]{MR4124434}. As the authors mention in \cite{MR4124434}, the formulation here is different than the cohomology lemmas that appear in \cite{MR766264} and \cite{MR842412}, but the proof is similar to the one in \cite[Theorem 5.5]{MR842412}. 

\begin{theorem}
\label{GroupoidActionsClassification}
Let $(X, \mu)$ be a standard probability space and let $\mathcal{G}$ be a discrete measured groupoid which is amenable with $\mathcal{G}^{(0)} = X$. Let $\alpha$ and $\beta$ be centrally free actions of $\mathcal{G}$ on a field of injective type $\textrm{III}$ factors $(B_{x})_{x\in X}$ such that $X = \mathcal{G}^{(0)}$. Then we have the following: 
\begin{enumerate}[(i)]
    \item The actions $\alpha$ and $\beta$ are cocycle conjugate if and only if $ \modnew(\alpha) \sim  \modnew(\beta)$,
    \item The actions $\alpha$ and $\beta$ are strongly cocycle conjugate if and only if $ \modnew(\alpha) =  \modnew(\beta)$.
\end{enumerate}
\end{theorem}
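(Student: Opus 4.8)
The ``only if'' implications are routine. If $(\theta_x)_{x\in X}$ and $(w_g)_{g\in\mathcal{G}}$ implement a cocycle conjugacy from $\alpha$ to $\beta$, then applying the Connes--Takesaki module map to $\theta_{t(g)}\circ\alpha_g\circ\theta_{s(g)}^{-1}=\Ad(w_g)\circ\beta_g$ and using $\modnew(\Ad(w_g))=\id$ shows that $(\modnew(\theta_x))_{x\in X}$ conjugates $\modnew(\alpha)$ to $\modnew(\beta)$; if moreover $\modnew(\theta_x)=1$ a.e., this conjugacy is trivial, so $\modnew(\alpha)=\modnew(\beta)$. I therefore concentrate on the converse, and reduce (i) to (ii) as follows. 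Given a conjugacy $(\sigma_x)_{x\in X}$ of the actions $\modnew(\alpha)$ and $\modnew(\beta)$ on the field of flows, I use the splitting of $1\to\ker(\modnew)\to\Aut(B_x)\xrightarrow{\modnew}\Aut_\theta(\mathcal{C}_{B_x})\to1$, arranged to depend measurably on $x$, to select $\widehat{\sigma}_x\in\Aut(B_x)$ with $\modnew(\widehat{\sigma}_x)=\sigma_x$. The conjugated action $\alpha'_g:=\widehat{\sigma}_{t(g)}\circ\alpha_g\circ\widehat{\sigma}_{s(g)}^{-1}$ is cocycle conjugate to $\alpha$ and satisfies $\modnew(\alpha'_g)=\sigma_{t(g)}\circ\modnew(\alpha_g)\circ\sigma_{s(g)}^{-1}=\modnew(\beta_g)$, so $\modnew(\alpha')=\modnew(\beta)$ on the nose; hence it suffices to treat the case $\modnew(\alpha)=\modnew(\beta)$, and the conclusion of (ii) for $\alpha'$ and $\beta$ gives (i) for $\alpha$ and $\beta$.

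Assume now $\modnew(\alpha)=\modnew(\beta)$. Since the factors $B_x$ are of type $\textrm{III}$, hence infinite, Theorem \ref{2CohomologyVanishingGroupoidsAlternate} lets me untwist any $2$-cocycle that arises, so I may pass freely between genuine actions and cocycle actions. For a.e.\ $x$, the restriction $\alpha|_{\Gamma_x}$ is a centrally free action of the countable amenable isotropy group $\Gamma_x$ on the injective factor $B_x$, and $\modnew(\alpha|_{\Gamma_x})=\modnew(\beta|_{\Gamma_x})$; hence by Theorem \ref{StrongCocycleConjugacy}(ii) the actions $\alpha|_{\Gamma_x}$ and $\beta|_{\Gamma_x}$ are strongly cocycle conjugate. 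The task is to upgrade this fiberwise fact to a single measurable field $(\theta_x)_{x\in X}$ of $*$-automorphisms and a measurable field of unitaries $(w_g)_{g\in\mathcal{G}}$, $w_g\in\mathcal{U}(B_{t(g)})$, that is coherent under the groupoid: the conjugations $\Gamma_{s(g)}\to\Gamma_{t(g)}$, $h\mapsto ghg^{-1}$, together with the identifications $\alpha_g,\beta_g:B_{s(g)}\to B_{t(g)}$, relate the fiberwise data at $s(g)$ and at $t(g)$, and the desired global object $(\theta_x,w_g)$ must intertwine all of them while keeping $\modnew(\theta_x)=1$.

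This gluing is precisely what the cohomology lemma \cite[Theorem 3.5]{MR4124434} is designed to do: it produces the equivariant field of cocycle conjugacies provided that, for a.e.\ $x$, the cocycle self-conjugacies of $\alpha|_{\Gamma_x}$ with trivial module are approximately inner in the quantitative sense of Lemma \ref{CocycleSelfConjugacyNonCartan} --- namely, whenever $\theta\in\Aut(B_x)$ with $\modnew(\theta)=1$ and a $1$-cocycle $v$ for $\alpha|_{\Gamma_x}$ satisfy $\theta\circ\alpha_h\circ\theta^{-1}=\Ad(v_h)\circ\alpha_h$ for all $h\in\Gamma_x$, there are unitaries $u_n\in\mathcal{U}(B_x)$ with $\Ad(u_n)\to\theta$ and $u_n^*v_h\alpha_h(u_n)\to1$ for every $h$. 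Since Lemma \ref{CocycleSelfConjugacyNonCartan} supplies exactly this hypothesis, \cite[Theorem 3.5]{MR4124434} yields a measurable field $(\theta_x)$ with $\modnew(\theta_x)=1$ a.e.\ and a measurable field of unitaries $(w_g)$ with $\theta_{t(g)}\circ\alpha_g\circ\theta_{s(g)}^{-1}=\Ad(w_g)\circ\beta_g$ together with the matching cocycle identity. Reading this back through Theorem \ref{2CohomologyVanishingGroupoidsAlternate} gives the strong cocycle conjugacy of $\alpha$ and $\beta$ asserted in (ii), and combined with the reduction above it proves (i).

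\textbf{Main obstacle.} The necessity direction, the measurable lift of the module conjugacy, and the $2$-cohomology vanishing are all either immediate or quoted. The real work lies in the gluing step: turning the fiberwise strong cocycle conjugacies from Theorem \ref{StrongCocycleConjugacy} into a measurable, $\mathcal{G}$-equivariant family, which requires identifying the residual freedom (exactly the cocycle self-conjugacies of $\alpha|_{\Gamma_x}$) and arranging that the resulting equivariance obstruction lies in the cohomology group killed by \cite[Theorem 3.5]{MR4124434}. The feature absent from the $\textrm{II}_{1}$ situation of \cite{MR4124434} is that the modular data must be carried through the whole construction: the fiberwise conjugacies are pinned down only up to centrally trivial automorphisms, and it is Lemma \ref{CocycleSelfConjugacyNonCartan}, via the description of centrally trivial automorphisms in \cite[Theorem 1]{MR1179014}, that brings this ambiguity back under control by reducing it to approximate innerness.
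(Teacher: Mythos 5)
Your proposal is correct and follows essentially the same route as the paper: reduce (i) to (ii) via surjectivity of the Connes--Takesaki module map, obtain fiberwise strong cocycle conjugacies of the isotropy-group actions from Theorem \ref{StrongCocycleConjugacy}, and glue them equivariantly with \cite[Theorem 3.5]{MR4124434}, whose dense-orbit hypothesis is exactly what Lemma \ref{CocycleSelfConjugacyNonCartan} supplies. The only details you leave implicit that the paper spells out are the semidirect-product decomposition $\mathcal{G}\cong\Gamma\rtimes_{\delta}\mathcal{R}$ via a measurable section of $\mathcal{R}$ into $\mathcal{G}$ and the final verification that the two pieces of cocycle data assemble into a single $1$-cocycle on all of $\mathcal{G}$; these are routine given your setup.
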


\begin{proof}
It is straightforward to check the `only if' directions of both statements. We shall first deduce (ii) from (i) and then prove (i). Now suppose that $ \modnew(\alpha) \sim  \modnew(\beta)$ and let $\sigma_{x} \in \Aut_{\theta_{x}}(\mathcal{C}_{B_{x}})$ be the automorphisms for all $x \in X$ such that $\sigma_{t(g)}\circ \modnew(\beta_{g}) \circ \sigma_{s(g)}^{-1} = \modnew(\alpha_{g})$ for a.e. $g \in \mathcal{G}$. By surjectivity of the Connes-Takesaki module map (c.f. \cite[Theorem 1.1]{MR1643188}), we can find automorphisms $\tilde{\sigma_{x}} \in \Aut(B_{x})$, such that $ \modnew(\tilde{\sigma_{x}}) = \sigma_{x}$ for all $x \in X$ and then consider the action $\beta'_{g} = \tilde{\sigma}_{t(g)}\circ \beta_{g} \circ \tilde{\sigma}_{s(g)}^{-1}$ for all $g \in \mathcal{G}$. We note here that $ \modnew(\alpha_{g}) =  \modnew(\beta'_{g})$ for a.e. $g \in \mathcal{G}$. Since $\beta$ and $\beta'$ are strongly cocycle conjugate by (ii), we have a field of automorphisms $\theta_{x} \in \Aut(B_{x})$ such that $ \modnew(\theta_{x}) = 1$ for a.e. $x \in X$ and a 1-cocycle $(c_{g})_{g \in \mathcal{G}}$ such that for a.e. $g \in \mathcal{G}$, we have
\begin{equation}
    \theta_{t(g)}\circ \beta'_{g} \circ \theta_{s(g)}^{-1} = \Ad(c_{g}) \circ \alpha_{g} \nonumber
\end{equation}
and hence $(\theta_{x} \circ \tilde{\sigma}_{x})_{x \in X}$ and $(c_{g})_{g \in \mathcal{G}}$ gives the required cocycle conjugacy between $\alpha$ and $\beta$. We prove the `if' direction of (ii) below. 

Let $\mathcal{R}$ be the non singular countable equivalence relation induced by $\mathcal{G}$ defined by
\begin{equation}
    \mathcal{R} = \{(t(g),s(g)) \; | \; g \in \mathcal{G}\}. \nonumber
\end{equation}
The amenability of $\mathcal{G}$ implies that $\mathcal{R}$ is an amenable equivalence relation and by the \cite{MR662736}, we know that $X$ can be partitioned into $X_{1}$ and $X_{2}$ where each $X_{i}$ is $\mathcal{R}$ invariant, $\mathcal{R}|_{X_{1}}$ has finite orbits and hence admits a fundamental domain and $\mathcal{R}|_{X_{2}}$ is generated by a single non-singular automorphism. From this it follows that there exists a measurable lift $q: \mathcal{R} \rightarrow \mathcal{G}$ such that 
\begin{itemize}
    \item $s(q(x,y)) = y$ and $t(q(x,y)) = x$, and
    \item $q$ is a morphism, i.e., $q(x,y)q(y,z) = q(x,z)$ for a.e. $(x,y) \in \mathcal{R}$
\end{itemize}
We let $\Gamma$ denote the family $(\Gamma_{x})_{x \in X}$ and define an action $\delta$ of $\mathcal{R}$ on $\Gamma$ as follows: For all $(x,y) \in \mathcal{R}$, let us denote by $\delta_{(x,y)}: \Gamma_{y} \rightarrow \Gamma_{x}$ the group isomorphism given by $\delta_{(x,y)}(g) = q(x,y)gq(x,y)^{-1}$ for all $g \in \Gamma_{y}$. Clearly $\delta_{(x,y)}\circ \delta_{(y,z)} = \delta_{(x,z)}$ as $q$ is a morphism. Hence the groupoid $\mathcal{G}$ is isomorphic to a semidirect product $\Gamma \rtimes_{\delta} \mathcal{R}$. 
For convenience of notation we shall use $\alpha_{(x,y)}$ and $\beta_{(x,y)}$ to denote $\alpha_{q(x,y)}$ and $\beta_{q(x,y)}$.

Let $Z^{1}_{\alpha}(\Gamma_{x}, \mathcal{U}(B_{x}))$ be the space of all 1-cocycles $c_{g}$ for the action $\alpha^{x}$. We denote by $P_{x}$ the Polish space of `strong' cocycle conjugacies between the actions $\alpha^{x}$ and $\beta^{x}$ on $B_{x}$. More precisely, let $P_{x} \subset \Aut(B_{x}) \times Z^{1}_{\alpha}(\Gamma_{x}, \mathcal{U}(B_{x}))$ be the collection of all $(\theta,(c_{g})_{g \in \Gamma_{x}})$ such that the following conditions are satisfied: 
\begin{itemize}
    \item $\theta \in \Aut(B_{x})$ and $ \modnew(\theta) = 1$,
    \item $c_{g} \in \mathcal{U}(B_{x})$ for all $g \in \Gamma_{x}$ and $(c_{g})_{g \in \Gamma_{x}}$ is a 1 cocycle for $\alpha^{x}$,
    \item $\theta \circ \beta_{g} \circ \theta^{-1} = \Ad(c_{g}) \circ \alpha_{g}$ for all $g,h \in \Gamma_{x}$.
\end{itemize}

The u-topology on $\Aut(B_{x})$ and the topology of pointwise convergence on the space of $1$-cocycles endows $P_{x}$ with a natural topology. We now note that since for the restricted actions of the isotropy groups $ \alpha^{x}, \beta^{x}:\Gamma_{x} \curvearrowright B_{x}$, $ \modnew(\alpha^{x}) =  \modnew(\beta^{x})$, the actions are strongly cocycle conjugate by Theorem \ref{StrongCocycleConjugacy}.  Thus we have that $P_{x}$ is non-empty. We have a family of Polish groups $\mathcal{U}(B_{x})$ acting on $P_{x}$ as follows: for $w \in U(B_{x})$ and $(\theta,v_{g}) \in P_{x}$, we define
\begin{equation}
w\cdot\theta = \Ad(w) \circ \theta \; \text{and} \; (w\cdot v)_{g} = wv_{g}\alpha_{g}(w^{*}) \nonumber    
\end{equation}
It is easy to check that $w \cdot \theta \circ \alpha_{g} \circ w\cdot \theta^{-1} = \Ad(w\cdot v)_{g} \circ \alpha_{g}$ and hence this action is well defined. We claim that this action has dense orbits. To check this let $(\theta,c)$ and $(\delta, d)$ be two elements of $P_{x}$. By Lemma \ref{CocycleSelfConjugacyNonCartan}, we consider the sequences of unitaries $w_{n}$ and $z_{n}$ such that the following holds: 
\begin{itemize}
    \item $\Ad(w_{n}) \rightarrow \theta$ and $\Ad(z_{n}) \rightarrow \delta$ in $\Aut(B_{x})$ 
    \item $\|w_{n}^{*}c_{g}\alpha_{g}(w_{n}) - 1\|^{\#}_{\phi} \rightarrow 0$ and $\|z_{n}^{*}d_{g}\alpha_{g}(z_{n}) - 1\|^{\#}_{\phi} \rightarrow 0$
\end{itemize}
Since $\Aut(B_{x})$ is a Polish group, the sequence of unitaries $u_{n} = z_{n}w_{n}^{*}$ satisfy $\Ad(u_{n}) \circ \theta \rightarrow \delta$. We recall from the proof of Lemma \ref{CocycleSelfConjugacyNonCartan} that for a free ultrafilter $\omega$ on $\mathbb{N}$, we can assume that the sequences $W \coloneqq (w_{n})$ and $Z \coloneqq (z_{n})$ are in $\mathcal{U}(B_{x}^{\omega})$. Since the sequence $z_{n}^{*}d_{g}\alpha_{g}(z_{n}) - 1 \rightarrow 0$ *-strongly and $Z \in B_{x}^{\omega}$, we also have that $z_{n}\alpha_{g}(z_{n}^{*}) \rightarrow d_{g}$ *-strongly as $n \rightarrow \omega$. Now for the same reason, as $n \rightarrow \omega$ we also have: 
\begin{equation}
\|z_{n}w_{n}^{*}c_{g}\alpha_{g}(w_{n}z_{n}^{*}) - d_{g}\|^{\#}_{\phi} \leq \|z_{n}w_{n}^{*}c_{g}\alpha_{g}(w_{n}z_{n}^{*}) - z_{n}\alpha_{g}(z_{n}^{*})\|^{\#}_{\phi} + \|z_{n}\alpha_{g}(z_{n}^{*}) - d_{g}\|^{\#}_{\phi} \rightarrow 0 \nonumber
\end{equation}
Therefore the unitaries $u_{n}$ also satisfy $u_{n} \cdot c_{g} \rightarrow d_{g}$ in the topology of pointwise convergence in $Z^{1}_{\alpha}(\Gamma_{x},\mathcal{U}(B_{x}))$ as $n \rightarrow \omega$. Once again by passing to a subsequence, we have proved our claim. Now, we follow the same notation and the same strategy as in \cite[Theorem 5.7]{MR4124434} and define the measurable field of continuous group isomorphisms $\mathcal{R} \ni (x,y) \rightarrow \alpha_{(x,y)}: \mathcal{U}(B_{y}) \rightarrow \mathcal{U}(B_{x})$ and the homeomorphisms $\mathcal{R} \ni (x,y) \rightarrow \gamma_{(x,y)}: P_{y} \rightarrow P_{x}$ such that to an element $(\theta,(c_{g})_{g \in \Gamma_{y}})$, the map $\gamma_{(x,y)}$ does the following: 
\begin{itemize}
    \item $\theta \in \Aut(B_{y})$ is sent to the element $\alpha_{(x,y)}\circ \theta \circ \beta_{(y,x)} \in \Aut(B_{x})$,
    \item $(c_{g})_{g \in \Gamma_{y}}$ is sent to the cocycle $(\alpha_{(x,y)}(c_{\delta_{(y,x)}(g)}))_{g \in \Gamma_{x}}$.
\end{itemize}

It can be easily checked that $\alpha_{(x,y)} \circ \theta \circ \beta_{(y,x)}$ and the 1-cocycle $g \rightarrow \alpha_{(x,y)}(c_{\delta_{(y,x)}(g)})$ form a cocycle conjugacy between the actions $\alpha^{x}$ and $\beta^{x}$ of the isotropy groups $\Gamma_{x}$ on $B_{x}$. Since $ \modnew(\alpha_{(x,y)}) =  \modnew(\beta_{(x,y)})$ for all $(x,y) \in \mathcal{R}$ and $ \modnew(\theta) = 1$, we have that this is indeed a strong cocycle conjugacy and hence $\gamma_{(x,y)}(\theta,(c_{g})_{g \in \Gamma_{y}})$ is an element in $P_{x}$. 

Now \cite[Theorem 3.5]{MR4124434} gives us a measurable field of cocycle conjugacies $\pi_{x} = (\theta_{x},(c_{g})_{g \in \Gamma_{x}})$ together with a 1-cocycle $\mathcal{R} \ni (x,y) \rightarrow c_{(x,y)} \in \mathcal{U}(B_{x})$ that satisfies $\pi_{x} = c_{(x,y)} \cdot \gamma_{(x,y)}(\pi_{y})$. Since $\pi$ is a section, $\theta_{x} \in \Aut(B_{x})$ clearly satisfies $ \modnew(\theta_{x}) = 1$. For an element $g \in \Gamma_{x}$, we then get that $c_{(x,y)}\alpha_{(x,y)}(c_{\delta_{(y,x)}(g)})\alpha_{g}(c_{(x,y)}^{*}) = c_{g}$. Hence it follows that: 

\begin{equation}
\label{eq1}
c_{(x,y)}\alpha_{(x,y)}(c_{\delta_{(y,x)}(g)}) = c_{g}\alpha_{g}(c_{(x,y)})
\end{equation}

Now any element $b \in \mathcal{G}$ with $s(b) = y$ and $t(b) = x$ can be written as $b = q(x,y)b_{0}$ for a unique element $b_{0} \in \Gamma_{y}$. We define a new field of unitaries $(d_{g} \in \mathcal{U}(B_{t(g)}))_{g \in \mathcal{G}}$ as follows: we put $d_{g} \coloneqq c_{g}$ if $g \in \Gamma_{x}$ for some $x \in X$, we put $d_{q(x,y)} \coloneqq c_{(x,y)}$ and finally for any element of the groupoid $b$ as above, we define $d(g) \coloneqq c_{(x,y)}\alpha_{(x,y)}(c_{b_{0}})$. Note that $b$ can also be written as $\delta_{(x,y)}(b_{0})q(x,y)$ and by Equation \ref{eq1}, $d_{g} = c_{\delta_{(x,y)}(b_{o})}\alpha_{\delta_{(x,y)}(b_{0})}(c_{(x,y)})$ and hence this is well defined. We now show that it is indeed a 1-cocycle for $\alpha$. Let $a,b$ be elements of $\mathcal{G}$ with $s(a) = z$, $t(a) = s(b) = y$ and $t(b) = x$. Note that: 
\begin{equation}
    ba = q(x,y)b_{0}q(y,z)a_{0} = \delta_{(x,y)}(b_{0})q(x,z)a_{0} = \delta_{(x,y)}(b_{0})\delta_{(x,z)}(a_{0})q(x,z) = q(x,z)\delta_{(y,z)}^{-1}(b_{0})a_{0} \nonumber
\end{equation}
Now we calculate that:
\begin{equation}
    \begin{split}
        d_{b}\alpha_{b}(d_{a}) & = c_{(x,y)}\alpha_{(x,y)}(c_{b_{0}}\alpha_{b_{0}}(d_{q(y,z)a_{0}})) =  c_{(x,y)}\alpha_{(x,y)}(c_{b_{0}}\alpha_{b_{0}}(d_{\delta_{(y,z)}(a_{0})q(y,z)}))\\ &= c_{(x,y)}\alpha_{(x,y)}(c_{b_{0}}\alpha_{b_{0}}(c_{\delta_{(y,z)}(a_{0})})\alpha_{b_{0}}\alpha_{\delta_{(y,z)}(a_{0})}(c_{(y,z)})) \\ &= c_{(x,y)}\alpha_{(x,y)}(c_{b_{0}\delta_{(y,z)}(a_{0})}\alpha_{b_{0}\delta_{(y,z)}(a_{0})}(c_{(y,z)})) \\& = c_{(x,y)}\alpha_{(x,y)}(c_{(y,z)}\alpha_{(y,z)}(c_{\delta^{-1}_{(y,z)}(b_{0})a_{0}})) \\ &= c_{(x,z)}\alpha_{(x,z)}(c_{\delta^{-1}_{(y,z)}(b_{0})a_{0}})) = d_{ba}
    \end{split}
    \nonumber
\end{equation}

This means that $(d_{g})_{g \in \mathcal{G}}$ is a 1-cocycle for $\alpha$ and along with $(\theta_{x})_{x \in X}$, this gives a strong cocycle conjugacy between $\alpha$ and $\beta$.
\end{proof}

Before we state the classification theorem for regular subalgebras, we define the following notation for convenience. Suppose we have two discrete measured groupoids $\mathcal{G}_{1}$ and $\mathcal{G}_{2}$ with unit spaces $X_{1}$ and $X_{2}$ respectively. Suppose $\sigma: \mathcal{G}_{1} \rightarrow \mathcal{G}_{2}$ is a groupoid isomorphism such that $\sigma(X_{1}) = X_{2}$. Suppose $\alpha_{1}$ and $\alpha_{2}$ are actions of $\mathcal{G}_{1}$ and $\mathcal{G}_{2}$ on fields of factors $(B_{x})_{x \in X_{1}}$ and $(D_{y})_{y \in X_{2}}$ respectively. We say that $ \modnew(\alpha_{1})$ and $ \modnew(\alpha_{2})$ are conjugate via $\sigma$ and denote it by $ \modnew(\alpha_{1}) \sim_{\sigma}  \modnew(\alpha_{2})$ if there exists a family of automorphisms $\phi_{x}: \mathcal{C}_{B_{x}} \rightarrow \mathcal{C}_{D_{\sigma(x)}}$ such that for a.e. $g \in \mathcal{G}$, we have 
\begin{equation}
    \phi_{t(g)} \circ  \modnew(\alpha_{g}) \circ \phi_{s(g)}^{-1} =  \modnew(\beta_{\sigma(g)}) \nonumber
\end{equation}
 
\begin{theorem}
\label{RegularInclusionsClassification}
For $i \in \{1,2\}$, let $B_{i} \subset M_{i}$ be two inclusions of von Neumann algebras, where $M_{i}$ are injective factors and $B_{i}$ are regular subalgebras with faithful normal conditional expectations $E_{i}: M \rightarrow B_{i}$ satisfying $\mathcal{Z}(\tilde{B_{i}}) = (\tilde{B_{i}})'\cap \widetilde{M}$. Let $\mathcal{Z}(B_{i}) = L^{\infty}(X_{i}, \mu_{i})$ and let the associated discrete amenable groupoids be $\mathcal{G}_{i}$, with $\mathcal{G}_{i}^{(0)} = X_{i}$ and cocycle actions $(\alpha_{i},u_{i})$ on $B_{i}$. Then there exists an isomorphism $\theta: M_{1} \rightarrow M_{2}$ satisfying $\theta(B_{1}) = B_{2}$ if and only if $\type(B_{1}) = \type(B_{2})$ and there is a nonsingular isomorphism $\sigma: \mathcal{G}_{1} \rightarrow \mathcal{G}_{2}$ with $\sigma(X_{1}) = X_{2}$ such that $\modnew(\alpha_{1}) \sim_{\sigma} \modnew(\alpha_{2})$.  
\end{theorem}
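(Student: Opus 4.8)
The plan is to prove the two implications separately: the forward implication is essentially bookkeeping, while the reverse implication reduces, through the dictionary between cocycle crossed products and regular inclusions recalled in Section~\ref{prelim}, to untwisting the $2$-cocycles (Theorem~\ref{2CohomologyVanishingGroupoidsAlternate}) and then applying the classification of genuine centrally free groupoid actions (Theorem~\ref{GroupoidActionsClassification}). Throughout I would concentrate on the case $\type(B_i)=\textrm{III}$, which carries all the new content; in the semifinite cases the continuous core is replaced by $M_i$ itself and the argument is parallel. For the forward direction: given $\theta\colon M_1\to M_2$ with $\theta(B_1)=B_2$, the restriction $\theta|_{B_1}$ is a $*$-isomorphism $B_1\to B_2$ carrying $\mathcal Z(B_1)$ onto $\mathcal Z(B_2)$, hence inducing a nonsingular isomorphism $X_1\to X_2$; since $\theta$ maps $\mathcal N_{M_1}(B_1)$ onto $\mathcal N_{M_2}(B_2)$ it descends to an isomorphism of the associated inverse semigroups, and hence to a nonsingular groupoid isomorphism $\sigma\colon\mathcal G_1\to\mathcal G_2$ with $\sigma(X_1)=X_2$ under which the cocycle actions become cocycle conjugate. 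Extending $\theta$ to $\widetilde\theta\colon\widetilde{M_1}\to\widetilde{M_2}$, which commutes with the trace-scaling flows and preserves the traces, and restricting to $\mathcal Z(\widetilde{B_1})$, gives fiberwise the flow isomorphisms witnessing $\modnew(\alpha_1)\sim_\sigma\modnew(\alpha_2)$; and $\theta$ restricts fiberwise to isomorphisms $B_{1,x}\cong B_{2,\sigma(x)}$ a.e., so $\type(B_1)=\type(B_2)$.

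\textbf{Reverse direction, reduction to a common field.} Using $\sigma$ to identify $\mathcal G_1=\mathcal G_2=:\mathcal G$ and $X_1=X_2=:X$, I am left with two cocycle actions $(\alpha_1,u_1)$, $(\alpha_2,u_2)$ of $\mathcal G$ on fields of injective type~$\textrm{III}$ factors $(B_{1,x})_{x\in X}$, $(B_{2,x})_{x\in X}$ of the same type, together with a measurable field of $\mathbb R$-equivariant flow isomorphisms $\phi_x\colon\mathcal C_{B_{1,x}}\to\mathcal C_{B_{2,x}}$ realizing $\modnew(\alpha_1)\sim\modnew(\alpha_2)$. First I would produce a measurable field of $*$-isomorphisms $\rho_x\colon B_{1,x}\to B_{2,x}$ with $\modnew(\rho_x)=\phi_x$: for types other than $\textrm{III}_0$ the fibers are pairwise isomorphic constant factors with rigid flow data, while for $\textrm{III}_0$ the Krieger--Connes classification applied fiberwise yields $B_{1,x}\cong B_{2,x}$ a.e., after which one corrects the module of an arbitrary isomorphism by lifting a flow automorphism through the split surjection $\modnew$; measurability is handled by a standard measurable-selection argument over the Effros--Borel structure. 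Transporting $(\alpha_1,u_1)$ through $(\rho_x)$ yields a cocycle action $(\alpha_1',u_1')$ of $\mathcal G$ on $(B_{2,x})_{x\in X}$, cocycle conjugate to $(\alpha_1,u_1)$, and satisfying $\modnew(\alpha_1')=\modnew(\alpha_2)$ by the defining intertwining property of the $\phi_x$.

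\textbf{Reverse direction, untwisting and classification.} Next I would invoke the hypothesis on the cores: by Lemma~\ref{CentrallyFreeCondition}, $\mathcal Z(\widetilde{B_i})=(\widetilde{B_i})'\cap\widetilde{M_i}$ says exactly that $(\alpha_i,u_i)$ is centrally free, and hence so is $(\alpha_1',u_1')$. Since type~$\textrm{III}$ factors are infinite, Theorem~\ref{2CohomologyVanishingGroupoidsAlternate} lets me untwist both cocycles: $u_1'$ and $u_2$ are coboundaries, and perturbing by the corresponding measurable fields of unitaries produces genuine actions $\beta_1,\beta_2$ of $\mathcal G$ on $(B_{2,x})_{x\in X}$ that are (strongly) cocycle conjugate to $(\alpha_1',u_1')$ and $(\alpha_2,u_2)$ respectively, remain centrally free, and — since conjugating by unitaries in a factor has trivial module — satisfy $\modnew(\beta_1)=\modnew(\alpha_1')=\modnew(\alpha_2)=\modnew(\beta_2)$. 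Now Theorem~\ref{GroupoidActionsClassification}(i) applies and gives that $\beta_1$ and $\beta_2$ are cocycle conjugate. Chaining the cocycle conjugacies $(\alpha_1,u_1)\sim(\alpha_1',u_1')\sim\beta_1\sim\beta_2\sim(\alpha_2,u_2)$ and translating back through the correspondence between cocycle crossed products and regular inclusions produces the desired isomorphism $\theta\colon M_1\to M_2$ with $\theta(B_1)=B_2$.

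\textbf{Main obstacle.} Conceptually, the weight of the theorem rests on Theorem~\ref{GroupoidActionsClassification}, which is already available here, so within this proof the only genuinely delicate point — everything else being bookkeeping or a direct appeal — is the second step: realizing the flow conjugacy $(\phi_x)$ by a \emph{measurable} field of algebra isomorphisms $(\rho_x)$ in the $\textrm{III}_0$ fibers, which forces one to combine the Krieger--Connes classification, the splitting of the module exact sequence, and a measurable selection over the Effros--Borel structure, all uniformly in $x$.
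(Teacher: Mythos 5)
Your proposal is correct and follows essentially the same route as the paper: Lemma \ref{CentrallyFreeCondition} converts the relative-commutant hypothesis on the cores into central freeness, Theorem \ref{2CohomologyVanishingGroupoidsAlternate} untwists the $2$-cocycles since the fibers are infinite, and Theorem \ref{GroupoidActionsClassification} (respectively the semifinite classification of \cite{MR842413} and the types $\mathrm{I}_n$, $\mathrm{II}_1$ handled by \cite{MR662736} and \cite{MR4124434}) then yields cocycle conjugacy and hence isomorphic crossed-product inclusions. The only difference is that you make explicit the preliminary step of choosing a measurable field of isomorphisms $\rho_x\colon B_{1,x}\to B_{2,\sigma(x)}$ realizing the given flow conjugacy so that both actions live on one field of factors; the paper leaves this identification implicit when invoking Theorem \ref{GroupoidActionsClassification}, and your treatment of it is a correct (and slightly more careful) filling-in rather than a divergence.
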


\begin{proof} If $B_{1}$ and $B_{2}$ are of type $\textrm{I}_{n}$ or $\textrm{II}_{1}$, this follows from \cite{MR662736} and \cite{MR4124434}. In the remaining cases, it is enough to show that if the preceding two conditions are satisfied, then the actions $(\alpha_{1},u_{1})$ and $(\alpha_{2}, u_{2})$ are cocycle conjugate, hence giving isomorphic crossed products. By Lemma \ref{CentrallyFreeCondition}, $(\alpha_{1}, u_{1})$ and $(\alpha_{2},u_{2})$ are centrally free. By Theorem \ref{2CohomologyVanishingGroupoidsAlternate} (for type 
$\textrm{I}_{\infty}$,  $\textrm{II}_{\infty}$ and type $\textrm{III}$),  we can assume that $\alpha_{1}$ and $\alpha_{2}$ are genuine actions. If $B_{1}$ and $B_{2}$ are not of type $\textrm{III}$, then $\alpha_{1}$ and $\alpha_{2}$ are cocycle conjugate by \cite[Theorem 1.2]{MR842413}. If they are both of type $\textrm{III}$, then cocycle conjugacy follows by Theorem \ref{GroupoidActionsClassification}. The `only if' direction is easy to check and we avoid the details here.   
\end{proof}

\section{Actions on fields of ergodic hyperfinite equivalence relations}
\label{sectioncartan}

We begin this section by recalling the notion of Maharam extensions \cite{MR169988} and looking at the canonical core of a type $\textrm{III}$ factor as a von Neumann algebra of an equivalence relation. Let $\mathcal{R}$ be a countable nonsingular equivalence relation on a standard probability space $(X, \mu)$. Let $\Sigma$ be a locally compact group with left Haar measure $\lambda$ and let $Z:\mathcal{R} \rightarrow \Sigma$ be a cocycle. The \textit{skew product equivalence relation} $\mathcal{R} \times_{Z} \Sigma$ is defined on the product space $(X\times \Sigma, \mu \times \lambda)$ by $(x,g) \sim (y,h)$ if and only if $(x,y) \in \mathcal{R}$ and $g^{-1}h = Z(x,y)$. We note that $\mathcal{R} \times_{Z} \Sigma$ is a countable $(\mu \times \lambda)$-nonsingular Borel equivalence relation on $X \times \Sigma$.

Recall from Section \ref{Equivalence relations and Cartan subalgebras} the Radon-Nikodym 1-cocycle $D_{\mu}$ for an equivalence relation $\mathcal{R}$ on $(X, \mu)$. We note that $\log(D_{\mu}): \mathcal{R} \rightarrow \mathbb{R}$ is also a 1 cocycle to the additive group $\mathbb{R}$. 

\begin{definition}
The skew product of $\mathcal{R}$ with the cocycle $\log(D_{\mu}): \mathcal{R} \rightarrow \mathbb{R}$ is called the \textit{Maharam extension} of $\mathcal{R}$ and is denoted by $c(\mathcal{R})$. We can easily check that  $c(\mathcal{R})$ is the countable nonsingular Borel equivalence relation on $(X \times \mathbb{R}, \mu \times \nu)$ where $\nu$ with respect to the Lebesgue measure is given by: 
\begin{equation}
    \nu(A) = \int_{A}e^{-t} \; dt \nonumber
\end{equation}
and $(x,s) \sim (y,t)$ if and only if $(x,y) \in \mathcal{R}$ and $t-s = \log(D_{\mu})(x,y)$.
\end{definition}

If we further assume that $\mathcal{R}$ is of type $\textrm{III}$ and consider the action $\mathbb{R} \curvearrowright (X \times \mathbb{R}, \mu \times \nu)$ where $d\nu = e^{-t} \;dt$ and where $\mathbb{R}$ acts by translation on the second component, we see that this action preserves the Maharam extension and hence induces an action on the abelian von Neumann algebra of $c(\mathcal{R})$-invariant functions $L^{\infty}(X \times \mathbb{R}, \mu \times \nu)^{c(\mathcal{R})}$, this is called the associated flow of the equivalence relation $\mathcal{R}$. If $\mathcal{R}$ is assumed to be ergodic, the only invariant functions with respect to this action are the constant functions. This precisely gives the smooth flow of weights on the factor $L(\mathcal{R})$. The type classification of von Neumann algebras and the classification of Borel equivalence relations agree in the sense that the nonsingular Borel equivalence relation $\mathcal{R}$ is of a particular type if and only if its associated von Neumann algebra $L(\mathcal{R})$ is of the same type. It is a well-known fact that for a nonsingular countable equivalence relation $\mathcal{R}$ on $(X,\mu)$, the von Neumann algebra $L(c(\mathcal{R}))$ associated with its Maharam extension is canonically isomorphic to the continuous core $\widetilde{L(\mathcal{R})}$ of $L(\mathcal{R})$ with respect to the faithful normal state associated to the measure $\mu$. For details about most of the discussion above, we refer the reader to \cite{MR0390172}.

Suppose we have an inclusion $A \subset B \subset M$ where $M$ is an injective factor, $A$ is a Cartan subalgebra of $M$, and $B$ is a regular subalgebra of $M$.  As in section 2.3, we can write $A \subset M$ as $L^{\infty}(X,\mu) \subset L(\mathcal{R})$ for an ergodic hyperfinite equivalence relation $\mathcal{R}$ on $(X, \mu)$. By Proposition \ref{StrongNormality}, we have that  $B$ is isomorphic to $L(\mathcal{S})$ where $\mathcal{S}$ is a strongly normal subequivalence relation of $\mathcal{R}$. As in the previous discussion, we notice that at the level of the continuous cores, $\tilde{A} \subset \widetilde{B} \subset \widetilde{M}$ is isomorphic to the inclusion $L^{\infty}(X \times \mathbb{R}, \mu \times \nu) \subset L(c(\mathcal{S})) \subset L(c(\mathcal{R}))$. In particular, $\tilde{A}$ is a Cartan subalgebra of $\widetilde{M}$ and hence maximal abelian. Now, $\widetilde{B}'\cap \widetilde{M} \subset \tilde{A}'\cap \widetilde{M} = \tilde{A} \subset \widetilde{B}$ and therefore $\mathcal{Z}(\widetilde{B}) = \widetilde{B}' \cap \widetilde{M}$. This will appear later in Corollary \ref{CartanIffRelativeCommutant}

We now shift our focus to 2-cohomology vanishing results for actions on Cartan inclusions. Similar to Theorem \ref{2CohomologyVanishingGroupoidsAlternate}, we prove the following theorem here.  

\begin{theorem}
\label{CohomologyVanishingGroupoidsCartanAlternate}
Let $(X, \mu)$ be a probability space and $\mathcal{G}$ be a discrete measured groupoid with unit space $X$. Let $(A_{x} \subset B_{x})_{x \in X}$ be a measurable field of infinite factors with Cartan subalgebras. Let $(\alpha,u)$ be a free cocycle action of $\mathcal{G}$ on the field $(A_{x} \subset B_{x})_{x\in X}$, where for $g \in \mathcal{G}$, we have that $\alpha_{g}(A_{s(g)}) = A_{t(g)}$, then the cocycle $u$ is a coboundary: there exists a measurable field of unitaries normalizing the Cartan subalgebras $\mathcal{G} \ni g \mapsto w_{g} \in \mathcal{N}_{B_{t(g)}}(A_{t(g)})$ such that for all $(g,h) \in \mathcal{G}^{(2)}$, we have 
\begin{equation}
    u(g,h) = \alpha_{g}(w_{h}^{*})w_{g}^{*}w_{gh}.  \nonumber
\end{equation}
\end{theorem}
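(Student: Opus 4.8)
The plan is to reprove Theorem \ref{2CohomologyVanishingGroupoidsAlternate} by running the Packer--Raeburn stabilization argument \emph{inside the Cartan subalgebras}. First I would represent all $B_{x}$ on a common separable Hilbert space, set $\mathcal{G}_{x} = \{ g \in \mathcal{G} \mid t(g) = x\}$, and for each $x$ fix a $*$-isomorphism $\phi_{x} \colon B_{x} \to B_{x} \otimes \mathcal{B}(l^{2}(\mathcal{G}_{x}))$; the new requirement is that $\phi_{x}$ also carry the Cartan pair onto $A_{x} \otimes l^{\infty}(\mathcal{G}_{x}) \subset B_{x} \otimes \mathcal{B}(l^{2}(\mathcal{G}_{x}))$, measurably in $x$. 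This is available because $B_{x}$ is an infinite factor: writing $A_{x} \subset B_{x}$ as $L^{\infty}(Z_{x}) \subset L(\mathcal{R}_{x})$, the fact that $1 \in L^{\infty}(Z_{x})$ is an infinite projection of $L(\mathcal{R}_{x})$ lets one partition $Z_{x}$ into countably many Borel pieces each equivalent, through the full pseudogroup $[[\mathcal{R}_{x}]]$, to $Z_{x}$ itself — using that all non-null sets are $[[\mathcal{R}_{x}]]$-equivalent in type $\textrm{III}$ and all sets of infinite measure are equivalent in types $\textrm{I}_{\infty}$ and $\textrm{II}_{\infty}$ — and this partition together with the accompanying partial isomorphisms assembles into an orbit equivalence $\mathcal{R}_{x} \cong \mathcal{R}_{x} \times \mathcal{I}$, where $\mathcal{I}$ is the full equivalence relation on the countable set $\mathcal{G}_{x}$; this is exactly $\phi_{x}$. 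Measurability of all the choices is a standard measurable-selection argument of the kind already used in Section \ref{sectionnoncartan}.

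With this in hand, write $D_{x} = B_{x} \otimes \mathcal{B}(l^{2}(\mathcal{G}_{x}))$, $\mathcal{A}_{x} = A_{x} \otimes l^{\infty}(\mathcal{G}_{x})$ and $\alpha'_{g} = \phi_{t(g)} \circ \alpha_{g} \circ \phi_{s(g)}^{-1}$; since $\alpha_{g}(A_{s(g)}) = A_{t(g)}$ and the $\phi_{x}$ preserve Cartans, $\alpha'_{g}(\mathcal{A}_{s(g)}) = \mathcal{A}_{t(g)}$. I would then rerun the construction of Theorem \ref{2CohomologyVanishingGroupoidsAlternate} verbatim, checking at each step that the unitary produced normalizes the relevant Cartan. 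The diagonal matrix units $1 \otimes e^{x}_{g,g}$ lie in $\mathcal{A}_{x}$ and the off-diagonal ones $1 \otimes e^{x}_{g,h}$ lie in $\mathcal{N}_{D_{x}}(\mathcal{A}_{x})$; the partial isometry $w_{g}$ with source projection $1 \otimes e^{t(g)}_{g,g}$ and range projection $\alpha'_{g}(1 \otimes e^{s(g)}_{1,1})$ can be taken in $\mathcal{N}_{D_{t(g)}}(\mathcal{A}_{t(g)})$ because both of these are infinite projections lying in the Cartan $\mathcal{A}_{t(g)}$ of the infinite factor $D_{t(g)}$, and any two infinite projections in a Cartan of an infinite factor are equivalent through an element of the full pseudogroup of the associated relation. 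Consequently $V_{g} = \sum_{k} \alpha'_{g}(1 \otimes e^{s(g)}_{k,1})\, w_{g}\, (1 \otimes e^{t(g)}_{g,gk})$ — a sum over orthogonal source/range projections of products of Cartan-normalizing partial isometries — lies in $\mathcal{N}_{D_{t(g)}}(\mathcal{A}_{t(g)})$, and in the ensuing decomposition $\Ad(V_{g}^{*}) \circ \alpha'_{g} = \beta_{g} \otimes \Ad(\lambda_{g})$ the isomorphism $\beta_{g} \colon B_{s(g)} \to B_{t(g)}$ satisfies $\beta_{g}(A_{s(g)}) = A_{t(g)}$, since the shift $\Ad(\lambda_{g})$ preserves $l^{\infty}(\mathcal{G}_{x})$.

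Continuing, $u'(g,h) = V_{g}^{*}\, \alpha'_{g}(V_{h}^{*})\, \phi_{t(g)}(u(g,h))\, V_{gh}$ normalizes $\mathcal{A}_{t(g)}$ — this is the one place where the hypothesis that $u$ takes values in the Cartan normalizers, $u(g,h) \in \mathcal{N}_{B_{t(g)}}(A_{t(g)})$, is used — hence its component $u_{1}(g,h)$, with $u'(g,h) = u_{1}(g,h) \otimes 1$ (by $\lambda_{g}\lambda_{h} = \lambda_{gh}$), lies in $\mathcal{N}_{B_{t(g)}}(A_{t(g)})$, so $W_{g} = \sum_{r} u_{1}(g, g^{-1}r) \otimes e^{t(g)}_{r,r}$ again lies in $\mathcal{N}_{D_{t(g)}}(\mathcal{A}_{t(g)})$. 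Then $Z_{g} \coloneqq \phi_{t(g)}^{-1}(V_{g}W_{g}) \in \mathcal{N}_{B_{t(g)}}(A_{t(g)})$ is a unitary satisfying $\alpha_{g}(Z_{h})Z_{g} = u(g,h)Z_{gh}$ exactly as in Theorem \ref{2CohomologyVanishingGroupoidsAlternate}, and $w_{g} \coloneqq Z_{g}^{*}$ then realizes $u$ as a coboundary through normalizing unitaries. The cocycle-identity computations are word-for-word those of Theorem \ref{2CohomologyVanishingGroupoidsAlternate} and are not repeated.

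The step I expect to be the main obstacle is the first one: producing, with the required measurable dependence on $x$, the Cartan-pair isomorphism $(A_{x} \subset B_{x}) \cong (A_{x} \otimes l^{\infty}(\mathcal{G}_{x}) \subset B_{x} \otimes \mathcal{B}(l^{2}(\mathcal{G}_{x})))$, i.e.\ the stable orbit equivalence $\mathcal{R}_{x} \cong \mathcal{R}_{x} \times \mathcal{I}$, together with the companion fact that two infinite projections in a Cartan of an infinite factor are equivalent via an element of the full pseudogroup. Both reduce to the elementary observation that in an infinite factor the identity is an infinite projection that can be split, within the Cartan, into countably many mutually $[[\mathcal{R}]]$-equivalent pieces summing to $1$; once these structural facts are in place, the remainder of the proof is bookkeeping, tracking normalizers through the Packer--Raeburn construction.
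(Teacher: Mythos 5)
Your proposal is correct and follows essentially the same route as the paper's own proof: choose the stabilizing isomorphisms $\phi_{x}$ to carry $A_{x}$ onto $A_{x}\otimes l^{\infty}(\mathcal{G}_{x})$ (via $\mathcal{S}_{x}\cong \mathcal{C}_{\mathbb{N}}\times\mathcal{S}_{x}$ for the infinite relation $\mathcal{S}_{x}$), pick the partial isometries $w_{g}$ inside the normalizer using ergodicity/equivalence of the relevant Cartan projections, and then track normalizers through the Packer--Raeburn computation of Theorem \ref{2CohomologyVanishingGroupoidsAlternate} unchanged. You also correctly identify the one place where $u(g,h)\in\mathcal{N}_{B_{t(g)}}(A_{t(g)})$ enters, so nothing further is needed.
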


\begin{proof}
We follow the same notation as in Theorem \ref{2CohomologyVanishingGroupoidsAlternate}. We consider $\mathcal{G}_{x} = \{g \in \mathcal{G}, \; t(g) = x\}$. For all $x \in X$, let $\mathcal{S}_{x}$ denote the corresponding ergodic hyperfinite equivalence relation on a standard probability space $(Y_{x}, \mu_{x})$ as in \cite{MR578656}. Since $\mathcal{S}_{x}$ is infinite for almost every $x \in X$, we can assume that $\mathcal{S}_{x}$ is isomorphic to $\mathcal{C}_{\mathbb{N}} \times \mathcal{S}_{x}$ where $\mathcal{C}_{\mathbb{N}}$ denotes the complete equivalence relation on $\mathbb{N}$, i.e., $m \sim n$ for all $m,n \in \mathbb{N}$. Once again by the main theorem in \cite{MR578656}, we can take a measurable field $(\phi_{x})_{x \in X}$ of *-isomorphisms $B_{x}$ and $\mathcal{B}(l^{2}(\mathcal{G}_{x}))$ such that $\phi_{x}(A_{x}) = A_{x} \otimes l^{\infty}(\mathcal{G}_{x})$ for a.e. $x \in X$ where $l^{\infty}(\mathcal{G}_{x})$ denotes the diagonal Cartan subalgebra in $\mathcal{B}(l^{2}(\mathcal{G}_{x}))$.  

Once again for $x \in X$, we denote by $D_{x}$ the infinite factor $B_{x} \otimes \mathcal{B}(l^{2}(\mathcal{G}_{x}))$, by $C_{x}$ the Cartan subalgebra $A_{x} \otimes l^{\infty}(\mathcal{G}_{x})$ and by $\alpha'_{g}$ the action
$\phi_{t(g)} \circ \alpha_{g} \circ \phi_{s(g)}^{-1} : B_{s(g)} \otimes \mathcal{B}(l^{2}(\mathcal{G}_{s(g)})) \rightarrow B_{t(g)} \otimes \mathcal{B}(l^{2}(\mathcal{G}_{t(g)}))$ such that $\alpha'_{g}(A_{s(g)} \otimes l^{\infty}(\mathcal{G}_{s(g)})) = A_{t(g)} \otimes l^{\infty}(\mathcal{G}_{t(g)})$. We consider the matrix units $e_{g,h}^{x}$ in $\mathcal{B}(l^{2}(\mathcal{G}_{x}))$, i.e., the element that sends the basis element $\delta_{h}$ to the basis element $\delta_{g}$ in $l^{2}(\mathcal{G}_{x})$. Denoting the identity element in each $\mathcal{G}_{x}$ by 1 we consider the elements $1 \otimes e^{t(g)}_{g,g} \in C_{s(g)}$ and $\alpha'_{g}(1 \otimes e^{s(g)}_{1,1}) \in C_{t(g)}$ for each $g \in \mathcal{G}$. By ergodicity of the underlying equivalence relation, we can find partial isometries $w_{g} \in D_{t(g)}$ such that $w_{g}^{*}w_{g} = 1 \otimes e^{t(g)}_{g,g}$, $w_{g}w_{g}^{*} = \alpha'_{g}(1 \otimes e^{s(g)}_{1,1})$ and $w_{g}C_{t(g)}w_{g}^{*} = C_{t(g)}$. 

We define now the unitary operator $V_{g} = \sum_{k \in \mathcal{G}_{s(g)}}\alpha'_{g}(1 \otimes e^{s(g)}_{k,1})w_{g}(1 \otimes e^{t(g)}_{g,gk})$ and note that $V_{g}^{*}C_{t(g)}V_{g} = C_{t(g)}$. As in the proof of Theorem \ref{2CohomologyVanishingGroupoidsAlternate}, we have that $V_{g}$ is the sum of partial isometries with orthogonal sources and ranges, and hence $V_{g} \in \mathcal{N}_{D_{t(g)}}(C_{t(g)})$. Now for $k,l \in \mathcal{G}_{s(g)}$, we can check that $\alpha'_{g}(1 \otimes e^{s(g)}_{k,l})= \Ad(V_{g})(1 \otimes e^{t(g)}_{gk,gl})$. We consider once again the operator $\lambda_{g}: l^{2}(\mathcal{G}_{s(g)}) \rightarrow l^{2}(\mathcal{G}_{t(g)})$ given by $\lambda_{g}(\delta_{h}) = \delta_{gh}$ and note that 
\begin{equation}
    \alpha_{g}' = \Ad(V_{g}) \circ (\beta_{g} \otimes \Ad(\lambda_{g})) \nonumber
\end{equation}
where $\beta_{g}: B_{s(g)} \rightarrow B_{t(g)}$ is an action of $\mathcal{G}$ on the field of factors $(B_{x})_{x \in X}$. We let the action $\beta_{g} \otimes \Ad(\lambda_{g})$ be denoted by $\sigma_{g}$. Consider now the two cocycle $u'(g,h) = V_{g}^{*}\alpha'_{g}(V_{h}^{*})\phi_{t(g)}(u_{g,h})V_{gh}$ and note that $u'(g,h) \in \mathcal{N}_{D_{t(g)}}(C_{t(g)})$. Thus $(\sigma,u')$ defines a cocycle action of $\mathcal{G}$ on the field of Cartan inclusions $(C_{x} \subset D_{x})_{x \in X}$. Once again we note that since $\lambda_{g}\circ \lambda_{h} = \lambda_{gh}$, there exists unitaries $u_{1}(g,h) \in \mathcal{N}_{D_{t(g)}}(C_{(t(g)})$ such that $u'(g,h) = u_{1}(g,h) \otimes 1$. For $r \in \mathcal{G}_{t(g)}$, we consider the matrix units $e^{t(g)}_{r,r}$ and define the unitary $W_{g} = \sum_{r \in \mathcal{G}_{t(g)}}u_{1}(g,g^{-1}r) \otimes e^{t(g)}_{r,r}$. Clearly $W_{g} \in \mathcal{N}_{D_{t(g)}}(C_{(t(g)})$, and $\sigma_{g}(W_{h})W_{g} = u'(g,h)W_{gh}$. Therefore the unitaries $Z_{g} \in \mathcal{N}_{D_{t(g)}}(C_{(t(g)})$ defined as $Z_{g} = \phi_{t(g)}^{-1}(V_{g}W_{g})$ satisfy $\alpha_{g}(Z_{h})Z_{g} = u(g,h)Z_{gh}$ as required.    

\end{proof}

The next step is to deduce the invariants for outer actions of amenable groupoids on fields of ergodic equivalence relations as in Theorem 5.7, \cite{MR4124434}. This was done for outer actions of countable amenable groups on the unique ergodic hyperfinite $\textrm{II}_{1}$ and $\textrm{II}_{\infty}$ equivalence relations in \cite{MR864169}. In the type $\textrm{III}$ setting as well, invariants were obtained for such actions on ergodic hyperfinite equivalence relations of type $\textrm{III}_{\lambda}$, $\lambda \in (0,1]$ in \cite{MR906080} and finally in the type $\textrm{III}_{0}$ case in \cite{MR1002120}. One should note here that these results can be considered as equivalence relation versions of Ocneanu's theorem (\cite{MR807949}) and the classification results due to Katayama, Sutherland and Takesaki ( \cite{MR990219} and \cite{MR1621416}). Recently, a unified approach to proving this was given by Masuda in \cite{https://doi.org/10.48550/arxiv.2210.15916}. 

We introduce the notion of the strong cocycle conjugacy in this context before stating the theorem. For $i \in \{1,2\}$, let $\mathcal{R}_{i}$ be countable ergodic equivalence relations on standard probability spaces $(X_{i}, \mu_{i})$. Now suppose $\Psi: (X_{1},\mu_{1}) \rightarrow (X_{2},\mu_{2})$ is a nonsingular measure space isomorphism. Letting $\rho = \log \frac{d(\Psi^{-1})_{*}\mu_{2}}{d\mu_{1}}$, we consider the map: 
\begin{equation}
    \tilde{\Psi}: (X_{1} \times \mathbb{R}, \mu_{1} \times \lambda) \rightarrow (X_{2} \times \mathbb{R}, \mu_{2} \times \lambda), \; \; \tilde{\Psi}(x,s) = (\Psi(x), \rho(x) + s) \nonumber  
\end{equation}
It can be checked that $\Tilde{\Psi}$ is a $\mathbb{R}$-equivariant measure preserving isomorphism and if $\Psi: \mathcal{R}_{1} \rightarrow \mathcal{R}_{2}$ is an orbit equivalence, then $\tilde{\Psi}: c(\mathcal{R}_{1}) \rightarrow c(\mathcal{R}_{2})$ is an orbit equivalence between the Maharam extensions. $\tilde{\Psi}$ induces a $\mathbb{R}$-equivariant isomorphism of the flow of weights given by $\tilde{\Psi}^{*}: L^{\infty}(X_{1} \times \mathbb{R})^{\mathcal{R}_{1}} \rightarrow L^{\infty}(X_{2} \times \mathbb{R})^{\mathcal{R}_{2}}$. Now we consider the measure spaces $(Y_{i}, \nu_{i})$ such that $\phi_{i}: L^{\infty}(Y_{i}) \rightarrow L^{\infty}(X_{i} \times \mathbb{R})^{\mathcal{R}_{i}}$ are the $\mathbb{R}$-equivariant *-isomorphisms that implements the flow of weights $\mathbb{R} \curvearrowright (Y_{i},\nu_{i})$. This gives us a $\mathbb{R}$-equivariant *-isomorphism: 
\begin{equation}
    \phi_{2}^{-1}\circ \tilde{\Psi}^{*} \circ \phi_{1}: L^{ \infty}(Y_{1},\nu_{1}) \rightarrow L^{\infty}(Y_{2},\nu_{2}) \nonumber
\end{equation}
and such a map is implemented by a $\mathbb{R}$-equivariant nonsingular isomorphism $\Psi_{0}: (Y_{1},\nu_{1}) \rightarrow (Y_{2},\nu_{2})$. We denote this isomorphism by $\modnew(\Psi)$. It can be shown that there is a nonsingular factor map $\pi_{i}:X_{i} \times \mathbb{R} \rightarrow Y_{i}$ implementing $\phi_{i}$ that satisfies $\modnew(\Psi) \circ \pi_{1} = \pi_{2} \circ \tilde{\Psi}$ almost everywhere. If $\Psi \in \Aut(\mathcal{R})$, we get the following formula: 
\begin{equation}
    \modnew(\Psi)(\pi_{1}(x,s)) = \rho(x) \cdot \pi_{2}(\Psi(x),s). \nonumber
\end{equation} 
For rigorous proofs of the facts discussed above, we refer the reader to \cite{MR0390172}. Now we are in a position to define the notion of strong cocycle conjugacy in this context.

\begin{definition}
\label{Defstrongcocycleconjugacycartans}
\begin{enumerate}[1.]
\item Let $G$ be a countable group and let $(\alpha,u)$ and $(\beta,v)$ be two cocycle actions of $G$ on Cartan inclusions $A \subset B$ and $C \subset D$ into injective factors. Let us denote the associated ergodic equivalence relations by $\mathcal{R}$ on $(X,\mu)$ and $\mathcal{S}$ on $(Y,\nu)$ respectively. We say that $(\alpha,u)$ and $(\beta,v)$ are \textit{strongly cocycle conjugate} if they are cocycle conjugate as in Definition \ref{DefCocycleConjugacyCartan} and $\theta: B \rightarrow D$ can be chosen such that the corresponding isomorphism $\theta: \mathcal{R} \rightarrow \mathcal{S}$ induces the trivial isomorphism on the flow of weights i.e., $\modnew(\theta) = 1$.

\item Let $(X, \mu)$ be a standard probability space and $\mathcal{G}$ be a discrete measured groupoid with unit space $\mathcal{G}^{0} = X$. Let $(\alpha,u)$ and $(\beta,v)$ be two cocycle actions of $\mathcal{G}$ on fields of injective  factors with Cartan subalgebras $(A_{x} \subset B_{x})_{x \in X}$ and $(C_{x} \subset D_{x})_{x \in X}$. Let $\mathcal{R}_{x}$ and $\mathcal{S}_{x}$ denote the corresponding fields of ergodic equivalence relations. We say that $(\alpha,u)$ and $(\beta,v)$ are \textit{strongly cocycle conjugate} if they are cocycle conjugate as in Definition \ref{DefCocycleConjugacyCartan} and moreover for almost every $x \in X$, $\theta_{x}: B_{x} \rightarrow D_{x}$ can be chosen such that the corresponding isomorphisms $\theta_{x}: \mathcal{R}_{x} \rightarrow \mathcal{S}_{x}$ induce trivial isomorphims on the flow of weights, i.e.,  $ \modnew(\theta_{x}) = 1$.
\end{enumerate}
\end{definition}

\begin{theorem}(See \cite[Theorem 2.4]{https://doi.org/10.48550/arxiv.2210.15916})
\label{conjugaceyOfGroupActionsOnEquivalenceRelation}
Let $B$ be an injective factor with a Cartan subalgebra $A \subset B$ and let the associated ergodic equivalence relation be $\mathcal{S}$. Let $G$ be a countable amenable group and $\alpha_{1}$, $\alpha_{2}$ actions of $G$ on $A \subset B$ such that the induced actions on $\mathcal{S}$ are outer. Then we have the following: 
\begin{enumerate}[(i)]
    \item $\alpha_{1}$ and $\alpha_{2}$ are cocycle conjugate, as in Definition \ref{DefCocycleConjugacyCartan} if and only if $ \modnew(\alpha_{1}) \sim  \modnew(\alpha_{2})$ 
    \item $\alpha_{1}$ and $\alpha_{2}$ are strongly cocycle conjugate, as in Definition \ref{Defstrongcocycleconjugacycartans} if and only if $\modnew(\alpha_{1}) =  \modnew(\alpha_{2})$.
\end{enumerate}
\end{theorem}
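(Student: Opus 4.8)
This is the classification of outer actions of countable amenable groups on an ergodic hyperfinite equivalence relation, due to Bezuglyi and Golodets (\cite{MR864169} in the measure-preserving case, \cite{MR906080} for type $\textrm{III}_{\lambda}$ with $\lambda \neq 0$, \cite{MR1002120} for type $\textrm{III}_{0}$), with the unified treatment of Masuda \cite{https://doi.org/10.48550/arxiv.2210.15916}; the plan I would follow is the Evans--Kishimoto intertwining scheme. The ``only if'' directions are immediate from the definitions together with the functoriality $\modnew(\Psi \circ \alpha_{g} \circ \Psi^{-1}) = \modnew(\Psi) \circ \modnew(\alpha_{g}) \circ \modnew(\Psi)^{-1}$ and the fact that a unitary in $\mathcal{N}_{B}(A) \subset \mathcal{U}(B)$ has trivial module. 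For the forward directions I would first reduce (i) to (ii): given an $\mathbb{R}$-equivariant isomorphism $\sigma$ of the flow of weights with $\sigma \circ \modnew(\alpha_{1})_{g} \circ \sigma^{-1} = \modnew(\alpha_{2})_{g}$ for all $g$, realise $\sigma = \modnew(\Psi_{0})$ for some $\Psi_{0} \in \Aut(\mathcal{S})$ --- this surjectivity of $\Aut(\mathcal{S}) \to \Aut_{\mathbb{R}}(\text{flow of weights})$ holds because $\mathcal{S}$ is ergodic hyperfinite (Krieger's theory) --- and replace $\alpha_{1}$ by $\Psi_{0}^{-1} \circ \alpha_{1} \circ \Psi_{0}$, whose module now equals that of $\alpha_{2}$; applying (ii) and composing with $\Psi_{0}$ gives the cocycle conjugacy.

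For (ii), assume $\modnew(\alpha_{1}) = \modnew(\alpha_{2})$. The first move is to pass to the Maharam extension $c(\mathcal{S})$, a hyperfinite $\textrm{II}_{\infty}$ equivalence relation carrying the trace-scaling flow $(\theta_{t})$, on which each $\alpha_{i}$ extends to a flow-commuting, measure-preserving action $\tilde{\alpha}_{i}$ whose restriction to $L^{\infty}(X \times \mathbb{R})^{c(\mathcal{S})}$ is $\modnew(\alpha_{i})$; everything is now phrased in terms of honest trace-preserving, flow-equivariant actions, and the goal is an automorphism $\Theta$ of $\mathcal{S}$ with $\modnew(\Theta) = 1$ together with a $1$-cocycle $w$ valued in $\mathcal{N}_{B}(A)$ such that $\Theta \circ \alpha_{1,g} \circ \Theta^{-1} = \Ad(w_{g}) \circ \alpha_{2,g}$. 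Two technical inputs are needed. The first is a Rokhlin-type lemma for outer amenable actions on $\mathcal{S}$ (lifted to $c(\mathcal{S})$): it produces approximately $\alpha$-equivariant towers indexed by F{\o}lner sets in $G$, and it is here that amenability of $G$ enters; this is the equivalence-relation analogue of Ocneanu's Rokhlin lemma. The second --- the crux --- is the approximate innerness of ``model'' self-cocycle-conjugacies: if $\Theta \in \Aut(\mathcal{S})$ has $\modnew(\Theta) = 1$ and $c$ is a $1$-cocycle for $\alpha$ valued in $\mathcal{N}_{B}(A)$ with $\Theta \circ \alpha_{g} \circ \Theta^{-1} = \Ad(c_{g}) \circ \alpha_{g}$, then there exist $w_{n} \in \mathcal{N}_{B}(A)$ with $\Ad(w_{n}) \to \Theta$ in the $u$-topology and $w_{n}^{*} c_{g} \alpha_{g}(w_{n}) \to 1$ pointwise --- the Cartan counterpart of Lemma \ref{CocycleSelfConjugacyNonCartan}, provable from the Bezuglyi--Golodets uniqueness theorem for outer amenable actions on the hyperfinite relation via an ultrapower argument mirroring the proof of Lemma \ref{CocycleSelfConjugacyNonCartan}.

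Granting these, run the Evans--Kishimoto scheme. Fix finite sets $K_{1} \subset K_{2} \subset \cdots$ exhausting $G$ and a summable sequence $\varepsilon_{n} > 0$, and inductively construct $\Theta_{n} \in \Aut(\mathcal{S})$ with $\modnew(\Theta_{n}) = 1$ and unitaries $w^{(n)}$ in $\mathcal{N}_{B}(A)$ so that, for $n$ odd, $\Theta_{n} \circ \alpha_{1,g} \circ \Theta_{n}^{-1}$ is $\varepsilon_{n}$-close to $\Ad(w^{(n)}_{g}) \circ \alpha_{2,g}$ for all $g \in K_{n}$, and symmetrically with $\alpha_{1}, \alpha_{2}$ swapped for $n$ even, while keeping $\Theta_{n+1}$ within $\varepsilon_{n}$ of $\Theta_{n}$ in the $u$-topology and the $w^{(n)}$ Cauchy in the pointwise topology; the Rokhlin lemma supplies the approximate equivariant structure at each step and the second input upgrades approximate cocycle conjugacies to controlled unitary perturbations. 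Since $\Aut(\mathcal{S})$ and the $1$-cocycle space are Polish, $\Theta := \lim_{n} \Theta_{n}$ and $w := \lim_{n} w^{(n)}$ exist and satisfy $\Theta \circ \alpha_{1,g} \circ \Theta^{-1} = \Ad(w_{g}) \circ \alpha_{2,g}$; because every perturbing unitary lies in $\mathcal{N}_{B}(A)$, one has $\modnew(\Theta_{n}) = 1$ for all $n$ and hence $\modnew(\Theta) = 1$, so the cocycle conjugacy is strong, which is (ii).

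The main obstacle I anticipate is the second technical input, the approximate innerness of self-cocycle-conjugacies: it packages all of the Ocneanu-type rigidity in this setting, and it is precisely the statement that will be needed later as Lemma \ref{approximateInnernessOfSelfConjugacyCartan}. A further, genuinely harder point is the type $\textrm{III}_{0}$ case: there the flow of weights is non-transitive, so the Rokhlin lemma, the cohomology arguments and the module bookkeeping must all be carried out fibrewise over the flow space with measurable, $\mathbb{R}$-equivariant choices --- this is why \cite{MR1002120} was required beyond \cite{MR864169} and \cite{MR906080}, and why the whole argument must track $\modnew$ throughout rather than only recover it at the end.
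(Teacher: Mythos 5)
The paper does not prove this theorem: it is stated purely as a quoted external result, with the proof delegated to Masuda \cite{https://doi.org/10.48550/arxiv.2210.15916} and the earlier case-by-case treatments \cite{MR864169}, \cite{MR906080}, \cite{MR1002120}, so there is no internal proof to compare your argument against. Your sketch --- reducing (i) to (ii) via surjectivity of $\Aut(\mathcal{S}) \to \Aut_{\mathbb{R}}$ of the flow, then running an Evans--Kishimoto intertwining driven by a Rokhlin lemma for outer amenable actions and by approximate innerness of self-cocycle-conjugacies --- is a faithful outline of the strategy of those cited proofs, and the second technical input you isolate is precisely what the paper later establishes independently as Lemma \ref{approximateInnernessOfSelfConjugacyCartan}.
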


Now we briefly discuss ultrapowers of Lebesgue spaces. We first fix a free ultrafilter $\omega$ on $\mathbb{N}$. Let $A \subset B$ be a Cartan subalgebra of an injective factor. Let $\mathcal{R}$ be the corresponding ergodic hyperfinite equivalence relation on a probability space $(X, \mathcal{B}, \mu)$. We consider sequences $(A_{n})_{n \in \mathbb{N}}$ of subsets in $\mathcal{B}$ and define the equivalence relation $\sim$ by putting $(A_{n})_{n \in \mathbb{N}} \sim (B_{n})_{n \in \mathbb{N}}$ if $\mu(A_{n} \Delta B_{n}) \rightarrow 0$ as $n \rightarrow \omega$. Now we define, as in \cite{MR0444900} the ultraproduct $\mathcal{B}^{\omega} = \{(A_{n})_{n \in \mathbb{N}}, A_{n} \in \mathcal{B}\} / \sim$. This forms a Boolean algebra and we define the measure $\mu^{\omega}$ by putting $\mu^{\omega}((A_{n})_{n \in \mathbb{N}}) = \lim_{n \rightarrow \omega} \mu(A_{n})$. Thus $(\mathcal{B}^{\omega}, \mu^{\omega})$ is a measure algebra and every automorphism $T$ of $(X, \mu)$ induces an automorphism $T^{\omega}$ of $\mathcal{B}^{\omega}$ given by $T^{\omega}((A_{n})_{n \in \mathbb{N}}) = (T(A_{n}))_{n \in \mathbb{N}}$. 

We now denote by $\mathcal{B}_{\omega}$ the fixed point algebra of $\{S^{\omega}, \; S \in [\mathcal{R}]\}$. We also denote by $\mu_{\omega}$ the restriction of $\mu^{\omega}$ to $\mathcal{B}_{\omega}$. In other words, $\mathcal{B}_{\omega}$ can be represented by sequences $(A_{n})_{n \in \mathbb{N}}$ such that $\mu(A_{n} \Delta S(A_{n})) \rightarrow 0$ as $n \rightarrow \omega$ for all $S \in [\mathcal{R}]$. If $T \in \Aut(\mathcal{R})$, we have that $T^{\omega}$ leaves $\mathcal{B}_{\omega}$ invariant and we denote by $T_{\omega}$ the restriction of $T^{\omega}$ to $\mathcal{B}_{\omega}$. It can be checked that $T_{\omega}$ is a measure-preserving automorphism of $\mathcal{B}_{\omega}$. By \cite[Lemma 2.4]{MR0444900}, we have that for all $T \in \Aut(\mathcal{R})$, $T_{\omega} = 1$ if and only if $T \in [\mathcal{R}]$. We note that \cite[Lemma 2.3]{MR0444900} in the context of von Neumann algebras, gives the following fact: Let $B$ be an injective factor with a Cartan subalgebra $A\subset B$ and let $\mathcal{S}$ be the associated ergodic equivalence relation. If $\alpha \in \Aut(\mathcal{S})$ is outer, then the induced automorphism on $A^{\omega} \cap B_{\omega}$ is properly outer.

We now prove the following lemma on the approximate innerness of cocycle self conjugacies, i.e. an equivalence relation version of Lemma \ref{CocycleSelfConjugacyNonCartan} 

\begin{lemma}
\label{approximateInnernessOfSelfConjugacyCartan}
Let $G$ be a countable amenable group and $\alpha$ be an action of $G$ on an injective factor $B$ with a Cartan subalgebra $A \subset B$ such that the action is Cartan preserving, i.e., for all $g \in G$, $\alpha_{g}(A) = A$. Let $\mathcal{S}$ be the ergodic equivalence relation associated to the inclusion $A \subset B$ on a standard probability space $(X, \mathcal{B}, \mu)$ and assume that the action of $G$ induced on $\mathcal{S}$ is outer. If $\theta \in \Aut(A \subset B)$ such that $\theta \in \ker(\modnew)$ and $v_{g} \in \mathcal{N}_{B}(A)$ is a 1-cocycle for the action $\alpha$ satisfying $\theta\circ\alpha_{g}\circ \theta^{-1} = \Ad(v_{g}) \circ \alpha_{g}$ for all $g \in G$, there exists a sequence $u_{n} \in \mathcal{N}_{B}(A)$ such that:
 \begin{equation}
     \Ad(u_{n}) \rightarrow \theta \in \Aut(B) \; \; \; \text{and} \; \; \; u_{n}\alpha_{g}(u_{n}^{*}) - v_{g} \rightarrow 0 \nonumber
 \end{equation}
in the u-topology and the *-strong topology for all $g \in G$ respectively.  

\end{lemma}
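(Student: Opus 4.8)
The plan is to mirror the argument of Lemma~\ref{CocycleSelfConjugacyNonCartan}, replacing the inputs about centrally free actions of amenable groups on injective factors by their counterparts for outer actions of amenable groups on ergodic hyperfinite equivalence relations. First I would produce the approximating unitaries \emph{inside} the normalizer. Since $\theta$ preserves $A$, it induces an automorphism $T_{\theta}\in\Aut(\mathcal{S})$, and $\theta\in\ker(\modnew)$ is exactly the statement $\modnew(T_{\theta})=1$. By the structure theory of ergodic hyperfinite equivalence relations of type $\textrm{III}$ (Krieger~\cite{MR415341}, Connes~\cite{MR454659}, Hamachi--Osikawa~\cite{MR617740}, Bezuglyi--Golodets~\cite{MR780502}), the kernel of $\modnew$ on $\Aut(\mathcal{S})$ is the closure of the full group $[\mathcal{S}]$ in the u-topology, so there exist $w_{n}\in\mathcal{N}_{B}(A)$ with $\Ad(w_{n})\to\theta$ in the u-topology of $\Aut(B)$. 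Exactly as in Lemma~\ref{CocycleSelfConjugacyNonCartan}, using that $\phi\circ\theta$ is again a faithful normal state, one checks that $(w_{n})$ normalizes the trivial sequences, so $W\coloneqq(w_{n})$ defines a unitary in the Ocneanu ultrapower $B^{\omega}$; since each $w_{n}$ normalizes $A$, the unitary $W$ normalizes $A^{\omega}$.

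Next I would set $V_{g}\coloneqq W^{*}v_{g}\alpha_{g}(W)$. From $\Ad(w_{n})\circ\alpha_{g}\circ\Ad(w_{n}^{*})\to\Ad(v_{g})\circ\alpha_{g}$ one deduces $\|[\phi, w_{n}^{*}v_{g}\alpha_{g}(w_{n})]\|\to 0$ along $\omega$, so $V_{g}$ lies in the centralizing-sequence algebra $B_{\omega}$; and since $v_{g}$ and all the $w_{n}$ lie in $\mathcal{N}_{B}(A)$ and $\alpha_{g}(A)=A$, the unitary $V_{g}$ normalizes $A^{\omega}\cap B_{\omega}$. The same computation as in Lemma~\ref{CocycleSelfConjugacyNonCartan} shows that $(V_{g})_{g\in G}$ is a $1$-cocycle for the induced action of $G$ on $B_{\omega}$ with values in $\mathcal{N}_{B_{\omega}}(A^{\omega}\cap B_{\omega})$. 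By the fact recalled before the lemma from \cite[Lemma~2.3]{MR0444900}, the induced action of $G$ on $A^{\omega}\cap B_{\omega}$ --- equivalently, on its associated equivalence relation --- is outer. I would then invoke the $1$-cohomology vanishing theorem for outer actions of amenable groups on ergodic hyperfinite equivalence relations, the equivalence-relation analogue of Ocneanu's \cite[Proposition~7.2]{MR807949} available through Golodets--Sinelshchikov~\cite{golodets1983existence} and Bezuglyi--Golodets~\cite{MR864169} (see also~\cite{https://doi.org/10.48550/arxiv.2210.15916}), to obtain a unitary $Z\in\mathcal{N}_{B_{\omega}}(A^{\omega}\cap B_{\omega})$ with $Z^{*}V_{g}\alpha_{g}(Z)=1$ for all $g\in G$.

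Finally I would choose a representative sequence $(z_{n})$ of $Z$ with $z_{n}\in\mathcal{N}_{B}(A)$; since $Z$ is $\omega$-centralizing, $\Ad(z_{n})\to\id$ in the u-topology. Putting $u_{n}\coloneqq w_{n}z_{n}\in\mathcal{N}_{B}(A)$ gives $\Ad(u_{n})\to\theta$, and unwinding $Z^{*}W^{*}v_{g}\alpha_{g}(WZ)=1$ in $B^{\omega}$ yields $u_{n}^{*}v_{g}\alpha_{g}(u_{n})\to 1$, i.e.\ $u_{n}\alpha_{g}(u_{n}^{*})-v_{g}\to 0$ $*$-strongly; passing to a subsequence (both relevant topologies being metrizable) replaces the $\omega$-limits by ordinary limits. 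I expect the main obstacle to be the two structural inputs rather than the bookkeeping: one needs to know that $\ker(\modnew)$ in $\Aut(\mathcal{S})$ is precisely the u-closure of $[\mathcal{S}]$, so that $\theta$ can be approximated by unitaries taken from $\mathcal{N}_{B}(A)$ and not merely from $\mathcal{U}(B)$, and one needs the Cartan-preserving $1$-cohomology vanishing inside the central-sequence algebra, which is exactly where the amenability of $G$ and the outerness of the action on $\mathcal{S}$ enter.
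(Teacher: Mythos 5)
Your skeleton matches the paper's (approximate $\theta$ by normalizing unitaries, form the cocycle $V_g=W^*v_g\alpha_g(W)$ in the central sequence algebra, untwist it, multiply and extract a subsequence), but the two ``structural inputs'' you defer to are exactly where the proof lives, and as stated neither of them is available off the shelf. First, approximating $\theta$ by $\Ad(w_n)$ with $w_n\in\mathcal{N}_B(A)$ is not just a statement about $\Aut(\mathcal{S})$: by Feldman--Moore, $\Aut(B,A)\cong Z^1(\mathcal{S},\mathbb{T})\rtimes\Aut(\mathcal{S})$, so $\theta$ carries a circle-valued $1$-cocycle $c$ in addition to the point automorphism $\theta_0$, and knowing $\theta_0\in\overline{[\mathcal{S}]}$ only handles the second coordinate. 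The paper must also show that $c$ is approximately a coboundary, which it does by writing $\mathcal{S}=\cup_n\mathcal{S}_n$ with $\mathcal{S}_n$ of type $\mathrm{I}_n$ and using \cite[Proposition 7.4]{MR578656} on each piece. This omission is fixable, but it is a real step you skipped.

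The more serious gap is the untwisting of $V_g$. The cocycle $(V_g)_g$ lives in $B_\omega$ and the induced action of $G$ is on $A^\omega\cap B_\omega$, whose underlying measure algebra $\mathcal{B}_\omega$ is the ultraproduct of $\mathcal{B}$ and is \emph{not} a standard Lebesgue space. The vanishing theorems of Golodets--Sinelshchikov and Bezuglyi--Golodets you propose to invoke are proved for cocycles of actions on ergodic hyperfinite equivalence relations on standard probability spaces; there is no published ``equivalence-relation analogue of Ocneanu's Proposition 7.2'' formulated in the central-sequence setting with values in $\mathcal{N}_{B_\omega}(A^\omega\cap B_\omega)$ that you can cite, which is precisely why the non-Cartan Lemma \ref{CocycleSelfConjugacyNonCartan} can quote Ocneanu directly while this lemma cannot. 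The paper instead proves the needed vanishing by hand: it extracts a separable, globally invariant subalgebra $A_0\subset A^\omega\cap B_\omega$ on which the induced action is free and measure preserving, applies the Ornstein--Weiss Rokhlin lemma there to produce towers $(T_i,P_i)$, re-chooses the representative sequences of the $V_g$ by a diagonal argument so that $V_g$ commutes with the tower projections, builds the unitary $V=(1-P)+\sum_i\sum_{s\in T_i}V_s\alpha_{\omega,s}(P_i)$ to show $\|V_g-V\alpha_{\omega,g}(V^*)\|_2^2\le 8\epsilon$, and only then upgrades approximate vanishing to exact vanishing via Ocneanu's index selection trick. Without supplying this Rokhlin-tower argument (or an equivalent substitute), your second step does not go through.
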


\begin{proof}
We prove this in a similar way as Lemma \ref{CocycleSelfConjugacyNonCartan}. Let us denote the group of automorphisms of $B$ that preserve $A$ by $\Aut(B,A)$. By \cite[Theorem 3, Theorem 4]{MR578730} and the fact that the equivalence relation is hyperfinite, we have that: 
\begin{equation}
    \Aut(B,A) \cong  Z^{1}(\mathcal{R}, \mathbb{T}) \rtimes \Aut(\mathcal{R}) \nonumber
\end{equation}
where $Z^{1}(\mathcal{R}, \mathbb{T})$ denotes the group of 1 cocycles of $\mathcal{R}$ with values in the circle. The usual topology in $\Aut(B,A)$ corresponds to the u-topology in $\Aut(\mathcal{S})$ defined in Section 2.3 and the standard topology in $Z^{1}(\mathcal{R},\mathbb{T})$ of convergence
in measure as introduced in \cite{MR414775}.

Hence $\theta$ corresponds to a pair $(c,\theta_{0})$ where $\theta_{0} \in \Out(\mathcal{R})$ and $\modnew(\theta_{0}) = 1$. By the main result in \cite{MR550409}, we have that $\theta_{0} \in \overline{[\mathcal{S}]}$. Hence there exists a sequence $\phi_{n} \in [\mathcal{S}]$ such that $\phi_{n} \rightarrow \theta_{0}$ in the u-topology in $\Aut(\mathcal{S})$. Now we write $\mathcal{R} = \cup_{n}\mathcal{R}_{n}$ where each $\mathcal{R}_{n}$ is of type $\textrm{I}_{n}$ and note that the cocycle $c_{n} \coloneqq c|_{\mathcal{R}_{n}}$ is a coboundary by \cite[Proposition 7.4]{MR578656}. Thus there exists for each $n$, a Borel function $\delta_{n}: X \rightarrow \mathbb{T}$ such that $c_{n}(x,y) = \delta_{n}(x)\delta_{n}(y)^{-1}$ for a.e. $(x,y) \in \mathcal{R}_{n}$. Clearly, we can extend $c_{n}$ to all of $\mathcal{R}$ and define a cocycle on $\mathcal{S}$ that is a coboundary. Now we can check that $c_{n} \rightarrow c$ in $Z^{1}(\mathcal{R},\mathbb{T})$, i.e., $c$ is approximately a coboundary.

Since the topologies coincide, we have that the automorphisms in $\Aut(B,A)$ corresponding to $(c_{n},\phi_{n})$ are inner and converge to $\theta$ in the usual topology in $\Aut(B)$. Denoting by $w_{n} \in \mathcal{N}_{B}(A)$ the unitaries such that $\Ad(w_{n})$ corresponds to $(c_{n},\phi_{n})$, we note that $\Ad(w_{n}) \in \Aut(A \subset B)$ and $\Ad (w_{n}) \rightarrow \theta$. We fix a free ultrafilter $\omega$ on $\mathbb{N}$ and consider now the sequence $W \coloneqq (w_{n})_{n}$ which lies in $B^{\omega}$ by the same argument as in Lemma \ref{CocycleSelfConjugacyNonCartan}. Since $\Ad(w_{n}) \circ \alpha_{g} \circ \Ad(w_{n}^{*}) \rightarrow \Ad(v_{g}) \circ \alpha_{g}$ in $\Aut(A \subset B)$, we can check once again that letting $v_{g,n} = w_{n}v_{g}\alpha_{g}(w_{n}^{*})$, the sequence $(v_{g,n})_{n \in \mathbb{N}}$ is a $\omega$-centralizing sequence. Clearly for all $n \in \mathbb{N}$ and $g \in G$, $v_{g,n} \in \mathcal{N}_{B}(A)$.

Now we consider the action $\alpha_{\omega}$ of $G$ on $A^{\omega} \cap B_{\omega}$ and look at the corresponding $\mu_{\omega}$ preserving action on $\mathcal{B}_{\omega}$. By \cite[Lemma 2.3]{MR0444900} (see the previous discussion), this action is free. We claim now that there is a separable von Neumann subalgebra $A_{0} \subset A^{\omega} \cap B_{\omega}$ such that $\alpha_{\omega}$ restricts to a free measure preserving action of $G$ on $A_{0}$. To prove the claim, we first note that since $\alpha_{\omega}$ is free, for a fixed $g \in G$, there exists a set $I$ and orthogonal projections $p_{g,i} \in A^{\omega} \cap B_{\omega}$ such that for each $i \in I$, $\alpha_{\omega, g}(p_{g,i}) \perp p_{g,i}$ and $\vee_{i \in I}p_{g,i} = 1$. Since $A^{\omega} \cap B_{\omega}$ is a countably decomposable von Neumann algebra, we can thus find projections $p_{g,n}$ such that $\alpha_{\omega,g}(p_{g,n}) \perp p_{g,n}$ for all $n \in \mathbb{N}$ and $\vee_{n}p_{g,n} = 1$. Now we can define $A_{0}$ to be the von Neumann algebra generated by the countably many elements $\{\alpha_{\omega,g}(p_{h,n}) \; | \; g,h \in G\}$. This is clearly separable and by construction, the action $\alpha_{\omega}$ on $A_{0}$ remains free.  

Let the underlying measure space be $(Y,\nu)$ and let $K$ be a finite subset of $G$ and $\epsilon > 0$. We shall now use a version of the Ornstein-Weiss Rohlin Theorem (\cite{MR910005}) presented in \cite[Theorem 4.46]{MR3616077}. For some $m \in \mathbb{N}$, there exists finite subsets $T_{1},...,T_{m}$ of $G$ and projections $P_{1},...,P_{m}$ in $A_{0}$ such that: 
\begin{itemize}
    \item The projections $\{\alpha_{\omega,h}(P_{i}) \; | \; 1 \leq i \leq m, h \in T_{i}\}$ are orthogonal and their sum $P = \sum_{i=1}^{m}\sum_{s \in T_{i}}\alpha_{\omega,s}(P_{i})$ has trace at least $1 - \epsilon$.
    \item For every $g \in K$ and $1 \leq i \leq m$, we have that $|gT_{i} \cap T_{i}| \geq (1 - \epsilon)|T_{i}|$.
\end{itemize}

Let $(p_{i,k})_{k \in \mathbb{N}} \in A$ be a representative sequence for the element $P_{i} \in A_{0}$. Since $v_{g,n}$ is $\omega$-centralizing, for each $k$, we can find $n_{k} \in \mathbb{N}$ such that for all $1 \leq i\leq m$ and for all $g \in T_{i}$ we have that
\begin{equation}
   \| v_{g,n_{k}}\alpha_{g}(p_{i,k}) - \alpha_{g}(p_{i,k})v_{g,n_{k}} \|_{\phi}^{\#} \leq \frac{1}{k} \nonumber
\end{equation}

Now for all $g \in G$, we define $V_{g} \coloneqq (v_{g,n_{k}})_{k\in \mathbb{N}}$ and note that $V_{g}$ is still a $\omega$-centralizing sequence, i.e., $V_{g} \in B_{\omega}$. From the above equation we have that for each $1 \leq i \leq m$, $V_{g}$ commutes in $B_{\omega}$ with $\alpha_{g}(P_{i})$ whenever $g \in T_{i}$. It is easy to check, as in Lemma \ref{CocycleSelfConjugacyNonCartan} that $V_{g}$ is a 1-cocycle for the action of $G$ on $B_{\omega}$. Now we consider the element $V \in B_{\omega}$ given by $V = (1- P) + \sum_{i = 1}^{m}\sum_{g \in T_{i}} V_{g}\alpha_{g}(P_{i})$. The summands in $V$ have orthogonal domains and ranges, and hence $V$ is a unitary operator. Moreover $V$ clearly normalizes the subalgebra $A_{0}$. Now we fix $g \in K$ and look at the element $Q = \sum_{i=1}^{m}\sum_{s \in T_{i} \cap gT_{i}}\alpha_{s}(P_{i})$. We have that: 
\begin{equation}
    \begin{split}
        V_{g}\alpha_{\omega,g}(V)Q &= \sum_{i=1}^{m}\sum_{s \in T_{i} \cap gT_{i}}V_{g}\alpha_{\omega,g}(V)\alpha_{\omega,s}(P_{i}) = \sum_{i=1}^{m}\sum_{s \in T_{i} \cap gT_{i}}V_{g}\alpha_{\omega,g}(V\alpha_{\omega,g^{-1}s}(P_{i})) \\
        &= \sum_{i,j=1}^{m}\sum_{s,t \in T_{i} \cap gT_{i}} V_{g}\alpha_{\omega,g}(V_{t}\alpha_{\omega,t}(P_{j})\alpha_{\omega,g^{-1}s}(P_{i})) \\
        &= \sum_{i=1}^{m}\sum_{s \in T_{i} \cap gT_{i}}V_{g}\alpha_{\omega,g}(V_{g^{-1}s})\alpha_{\omega,s}(P_{i}) \\
        &= \sum_{i=1}^{m}\sum_{s \in T_{i} \cap gT_{i}}V_{s}\alpha_{\omega,s}(P_{i}) = VQ \nonumber
    \end{split}
\end{equation}
We note here that by our choice of $V_{g}$, it is clear that $Q$ commutes with $V$, and we similarly get $QV = QV_{g}\alpha_{g}(V)$. As in \cite[Theorem 4.7]{MR4124434}, we have that: 
\begin{equation}
     \| V_{g}\alpha_{\omega,g}(V) - V \|_{2}^{2} = \| (V_{g}\alpha_{\omega,g}(V) - V)(1 - Q) \|_{2}^{2} \leq 4\tau(1-Q) \nonumber
\end{equation}
where $\tau$ denotes the trace in $\mathcal{B}_{\omega}$. Now we have
\begin{equation}
    \begin{split}
        \tau(1-Q) &= \tau(1-P) + \sum_{i=1}^{n}\sum_{s \in T_{i} \backslash gT_{i}} \alpha_{\omega,s}(P_{i}) \\
        &= \tau(1-P) + \sum_{i=1}^{n}|T_{i} \backslash gT_{i}|\tau(P_{i}) \\
        & \leq \epsilon + \epsilon\sum_{i=1}^{n}|T_{i}|\tau(P_{i}) \leq 2\epsilon \nonumber
    \end{split}
\end{equation}
Thus we have that $\|V_{g} - V\alpha_{\omega,g}(V^{*})\|_{2}^{2} \leq 8\epsilon$ and that the 1-cocycle $V_{g}$ for the action $\alpha_{\omega}$ of $G$ on $B_{\omega}$ approximately vanishes. Now applying Ocneanu's index selection trick (\cite[Lemma 5.5]{MR807949}), we can find a unitary $Z \in \mathcal{U}(B_{\omega})$ which normalizes $A^{\omega} \cap B_{\omega}$ and such that $V_{g} = Z\alpha_{\omega,g}(Z^{*})$. Now we consider the unitary $W \in B^{\omega}$ given by $W \coloneqq (w_{n_{k}})_{k \in \mathbb{N}}$ and we choose a representative sequence $(z_{k})_{k \in \mathbb{N}}$ for the unitary $Z \in \mathcal{U}(B_{\omega})$ such that $z_{k} \in \mathcal{N}_{B}(A)$ for all $k \in \mathbb{N}$. We define $u_{k} \coloneqq w_{n_{k}}z_{k}$ and note that $\Ad(u_{k}) \rightarrow \theta$ and $u_{k}v_{g}\alpha_{g}(u_{k}^{*}) \rightarrow 1$ as $k \rightarrow \omega$. As in Lemma \ref{CocycleSelfConjugacyNonCartan}, we note that both the topologies are metrizable and hence we can choose a subsequence such that the conclusion holds as $k \rightarrow \infty$, as required.     
\end{proof}

\begin{theorem}
\label{CocycleCOnjugacyForActionsOnEquivalenceRelations}
Let $(Y, \nu)$ be a standard probability space and $(B_{y})_{y \in Y}$ be a measurable field of injective factors, with Cartan subalgebras $A_{y} \subset B_{y}$. Let $\alpha$ and $\beta$ be free actions of a discrete measured amenable groupoid $\mathcal{G}$ with unit space $Y$ on the field $(A_{y} \subset B_{y})_{y \in Y}$. Then the following holds:
 \begin{enumerate}[(i)]
     \item $\alpha$ and $\beta$ are cocycle conjugate if and only if $ \modnew(\alpha) \sim  \modnew(\beta)$.
    \item $\alpha$ and $\beta$ are strongly cocycle conjugate if and only if $ \modnew(\alpha) =  \modnew(\beta)$. 
\end{enumerate}
\end{theorem}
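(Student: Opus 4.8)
The plan is to run, mutatis mutandis, the argument of Theorem~\ref{GroupoidActionsClassification}, replacing the injective factors $B_x$ throughout by the Cartan inclusions $A_y\subset B_y$, the groups $\Aut(B_x)$ and $\mathcal{U}(B_x)$ by $\Aut(A_y\subset B_y)$ and $\mathcal{N}_{B_y}(A_y)$, the group classification input Theorem~\ref{StrongCocycleConjugacy} by Theorem~\ref{conjugaceyOfGroupActionsOnEquivalenceRelation}, and the approximate‑innerness input Lemma~\ref{CocycleSelfConjugacyNonCartan} by Lemma~\ref{approximateInnernessOfSelfConjugacyCartan}. The `only if' directions of (i) and (ii) are routine. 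To deduce (i) from (ii): if $\modnew(\alpha)\sim\modnew(\beta)$, choose a measurable field of $\mathbb{R}$-equivariant isomorphisms $\sigma_y$ of the flows implementing the conjugacy; since the module map $\Aut(A_y\subset B_y)\to\Aut_{\mathbb{R}}(\text{flow of }\mathcal{S}_y)$ (described via $\modnew(\Psi)$ in the discussion preceding Theorem~\ref{conjugaceyOfGroupActionsOnEquivalenceRelation}) is surjective, lift each $\sigma_y$ to $\tilde\sigma_y\in\Aut(A_y\subset B_y)$, set $\beta'_g=\tilde\sigma_{t(g)}\circ\beta_g\circ\tilde\sigma_{s(g)}^{-1}$ so that $\modnew(\alpha_g)=\modnew(\beta'_g)$ a.e., apply (ii) to $\alpha$ and $\beta'$, and compose the resulting cocycle conjugacy with $(\tilde\sigma_y)$.

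For (ii), let $\mathcal{R}$ be the amenable equivalence relation induced by $\mathcal{G}$ on $(Y,\nu)$. By \cite{MR662736} there is a measurable morphism lift $q:\mathcal{R}\to\mathcal{G}$ with $s(q(x,y))=y$, $t(q(x,y))=x$ and $q(x,y)q(y,z)=q(x,z)$ a.e., giving a semidirect product decomposition $\mathcal{G}\cong\Gamma\rtimes_\delta\mathcal{R}$ with $\delta_{(x,y)}(g)=q(x,y)gq(x,y)^{-1}$; write $\alpha_{(x,y)}:=\alpha_{q(x,y)}$ and $\beta_{(x,y)}:=\beta_{q(x,y)}$, which are isomorphisms of $A_y\subset B_y$ onto $A_x\subset B_x$. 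For a.e.\ $y$ the restrictions $\alpha^y,\beta^y$ to the isotropy group $\Gamma_y$ are actions on $A_y\subset B_y$ inducing outer actions on $\mathcal{S}_y$ with $\modnew(\alpha^y)=\modnew(\beta^y)$, so by Theorem~\ref{conjugaceyOfGroupActionsOnEquivalenceRelation}(ii) they are strongly cocycle conjugate. Hence the Polish space $P_y\subset\Aut(A_y\subset B_y)\times Z^1_\alpha(\Gamma_y,\mathcal{N}_{B_y}(A_y))$ of pairs $(\theta,(c_g)_{g\in\Gamma_y})$ with $\modnew(\theta)=1$, $(c_g)$ a $1$-cocycle for $\alpha^y$ valued in $\mathcal{N}_{B_y}(A_y)$, and $\theta\circ\beta_g\circ\theta^{-1}=\Ad(c_g)\circ\alpha_g$ for $g\in\Gamma_y$, is nonempty; it carries the action $w\cdot(\theta,(c_g))=(\Ad(w)\circ\theta,\,(wc_g\alpha_g(w^*)))$ of $\mathcal{N}_{B_y}(A_y)$. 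Lemma~\ref{approximateInnernessOfSelfConjugacyCartan} shows that any two points of $P_y$ admit representative sequences in $\mathcal{N}_{B_y}(A_y)$ whose $\Ad$'s converge appropriately and whose $1$-cocycle twists converge, so, exactly as in Theorem~\ref{GroupoidActionsClassification}, this action has dense orbits.

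Next I would introduce the measurable fields of continuous group isomorphisms $\alpha_{(x,y)}:\mathcal{N}_{B_y}(A_y)\to\mathcal{N}_{B_x}(A_x)$ and homeomorphisms $\gamma_{(x,y)}:P_y\to P_x$ sending $(\theta,(c_g)_{g\in\Gamma_y})$ to $\bigl(\alpha_{(x,y)}\circ\theta\circ\beta_{(y,x)},\,(\alpha_{(x,y)}(c_{\delta_{(y,x)}(g)}))_{g\in\Gamma_x}\bigr)$; because $\alpha_{(x,y)}$ preserves the Cartan subalgebras the twisted cocycle again lies in $\mathcal{N}_{B_x}(A_x)$, and $\modnew(\alpha_{(x,y)})=\modnew(\beta_{(x,y)})$ together with $\modnew(\theta)=1$ guarantees $\gamma_{(x,y)}(P_y)\subseteq P_x$. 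Applying the cohomology lemma \cite[Theorem 3.5]{MR4124434} to this equivariant datum yields a measurable section $y\mapsto\pi_y=(\theta_y,(c_g)_{g\in\Gamma_y})$ and a $1$-cocycle $(x,y)\mapsto c_{(x,y)}\in\mathcal{N}_{B_x}(A_x)$ for $\mathcal{R}$ with $\pi_x=c_{(x,y)}\cdot\gamma_{(x,y)}(\pi_y)$, which unwinds to the relation $c_{(x,y)}\alpha_{(x,y)}(c_{\delta_{(y,x)}(g)})=c_g\alpha_g(c_{(x,y)})$, exactly as in \eqref{eq1}. Finally, setting $d_g:=c_g$ on isotropy groups, $d_{q(x,y)}:=c_{(x,y)}$, and $d_b:=c_{(x,y)}\alpha_{(x,y)}(c_{b_0})$ for $b=q(x,y)b_0$ with $b_0\in\Gamma_y$, the same computation as in the proof of Theorem~\ref{GroupoidActionsClassification} shows $(d_g)_{g\in\mathcal{G}}$ is a well-defined $1$-cocycle for $\alpha$ taking values in the normalizers, and $(\theta_y)_{y\in Y}$ together with $(d_g)$ furnishes the desired strong cocycle conjugacy.

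The main obstacle, as in the factor case, is making the choices of cocycle conjugacies for the isotropy groups equivariant along the bundle; this is handled by \cite[Theorem 3.5]{MR4124434}, applicable precisely because the $\mathcal{N}_{B_y}(A_y)$-action on $P_y$ has dense orbits. The genuinely new content relative to Theorem~\ref{GroupoidActionsClassification} is that the density‑of‑orbits step now rests on Lemma~\ref{approximateInnernessOfSelfConjugacyCartan} rather than Lemma~\ref{CocycleSelfConjugacyNonCartan} --- i.e.\ on an Ornstein--Weiss Rokhlin argument and Ocneanu's index‑selection trick carried out inside the ultrapower of the underlying Lebesgue space --- and on Theorem~\ref{conjugaceyOfGroupActionsOnEquivalenceRelation} in place of Theorem~\ref{StrongCocycleConjugacy}. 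A secondary point requiring care is measurability of all the fields ($q$, $\pi_y$, $\gamma_{(x,y)}$, $d_g$) and the compatibility of the module invariant defined through equivalence‑relation isomorphisms with both surjectivity and the semidirect‑product decomposition. Combined with Theorem~\ref{CohomologyVanishingGroupoidsCartanAlternate}, the same argument also classifies free cocycle actions on fields of infinite factors with Cartan subalgebras.
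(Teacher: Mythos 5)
Your proposal is correct and follows essentially the same route as the paper: deduce (i) from (ii) via surjectivity of the module map, decompose $\mathcal{G}\cong\Gamma\rtimes_\delta\mathcal{R}$, use Theorem \ref{conjugaceyOfGroupActionsOnEquivalenceRelation} for nonemptiness of the spaces $P_y$ of strong cocycle conjugacies, Lemma \ref{approximateInnernessOfSelfConjugacyCartan} for density of the $\mathcal{N}_{B_y}(A_y)$-orbits, and the cohomology lemma of \cite[Theorem 3.5]{MR4124434} to make the choices equivariant before assembling the groupoid $1$-cocycle. The only detail the paper records that you omit is the observation (via the discussion before Theorem \ref{CohomologyVanishingGroupoidsCartanAlternate} and Lemma \ref{CentrallyFreeCondition}) that outerness on the equivalence relations yields central freeness of the actions, but this does not affect the validity of your argument.
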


\begin{proof}
As in Theorem \ref{GroupoidActionsClassification}, the `if' directions are the nontrivial ones. By the same argument as in Theorem \ref{GroupoidActionsClassification} using surjectivity of the Connes-Takesaki module map at the measure space level (c.f. \cite[Theorem 2.2]{MR1643188}, we can deduce (i) from (ii), so we prove (ii) here. Let $A$ and $B$ denote the direct integrals of the measurable fields $(A_{y})_{y \in Y}$ and $(B_{y})_{y \in Y}$ respectively. Since $\alpha$ and $\beta$ preserve globally the Cartan subalgebras, it can be checked as in the discussion preceding Theorem \ref{CohomologyVanishingGroupoidsCartanAlternate}, that $\mathcal{Z}(\widetilde{B}) = (\widetilde{B})'\cap \widetilde{M}_{\alpha}$ and $\mathcal{Z}(\widetilde{B}) = (\widetilde{B})'\cap \widetilde{M}_{\beta}$  where $M_{\alpha}$ and $M_{\beta}$ denotes the respective crossed products. Now by Lemma \ref{CentrallyFreeCondition}, $\alpha$ and $\beta$ are centrally free. With the same notation as Theorem \ref{GroupoidActionsClassification}, we let $\mathcal{R}$ be the corresponding amenable nonsingular equivalence relation on $Y$ and we let $\delta_{(y,z)}: \Gamma_{z} \rightarrow \Gamma_{y}$ be the measurable family of group isomorphisms for all $(y,z) \in \mathcal{R}$. We let $P_{y}$ be the Polish space of all strong cocycle conjugacies between the actions $(\alpha_{g})_{g \in \Gamma_{y}}$ and $(\beta_{g})_{g \in \Gamma_{y}}$ on the inclusion $A_{y} \subset B_{y}$. More precisely we define $P_{y} = \{(\theta,c_{g})_{g \in \Gamma_{y}} \; | \; \theta \in \ker(\modnew) \subset \Aut(B_{y}), \; \theta(A_{y}) = A_{y},  \; c_{g} \in \mathcal{N}_{B_{y}}(A_{y}), \; c_{gh} =c_{g}\alpha_{g}(c_{h}), \; \theta\beta_{g}\theta^{-1} = \Ad(c_{g})\alpha_{g} \; \forall \; g,h \in \Gamma_{y}\}$. We endorse $P_{y}$ with a topology in the same way as in Theorem \ref{GroupoidActionsClassification}. By Theorem \ref{conjugaceyOfGroupActionsOnEquivalenceRelation}, $P_{y}$ is nonempty for all $y \in Y$. Letting $G_{y} = \mathcal{N}_{B_{y}}(A_{y})$, we have an action $G_{y} \curvearrowright P_{y}$, as in Theorem \ref{GroupoidActionsClassification}. 
By the same argument as in Lemma \ref{CocycleSelfConjugacyNonCartan}, we can conclude that this action has dense orbits if every cocycle self conjugacy is approximately inner, which is guaranteed by Lemma \ref{approximateInnernessOfSelfConjugacyCartan}. 

We now consider the measurable field of isomorphisms $\mathcal{R} \ni (y,z) \mapsto \gamma_{(y,z)}: P_{z} \rightarrow P_{y}$ given by mapping $(\theta, (c_{g})_{g \in \Gamma_{z}}) \in P_{z}$ to the automorphism $\alpha_{(z,y)} \circ \theta \circ \beta_{(y,z)}$ of $B_{z}$ and the $1$-cocycle $g \mapsto \alpha_{(z,y)}(c_{\delta(y,z)}(g))$. This is well defined because $ \modnew(\alpha_{(z,y)} \circ \theta \circ \beta_{(y,z)}) = 1$ as $ \modnew(\alpha_{g}) = \ \modnew(\beta_{g})$ for all $g \in \mathcal{G}$.  We use once again \cite[Theorem 3.5]{MR4124434} to find a measurable field  $y \mapsto \pi_{y} = (\theta_{y},(c_{g})_{g \in \Gamma_{y}})$ of cocycle conjugacies together with a 1-cocycle $\mathcal{R} \ni (y,z) \mapsto c_{(y,z)} \in G_{y}$ with $\pi_{y} = c_{(y,z)} \in G_{y}$ such that $\pi_{y} = c_{(y,z)} \cdot \gamma_{(y,z)}(\pi_{z})$. Hence the field $(c_{g,y})_{g \in \Gamma_{y}}$ and $(c_{(y,z)})_{(y,z)\in \mathcal{R}}$ forms a 1-cocycle for the groupoid action $\alpha$ as shown in the proof of Theorem \ref{GroupoidActionsClassification}, and along with the field of isomorphisms $(\theta_{y})_{y\in Y}$, we have a strong cocycle conjugacy, as required.
\end{proof}

We have now all the ingredients to prove the main classification result for regular subalgebras with Cartan inclusions. 

\begin{theorem}
\label{classification of inclusionsCartan}
Let $A_{i} \subset B_{i} \subset M_{i}$, $i \in \{1,2\}$ be two triples of von Neumann algebras with separable preduals such that $M_{i}$'s are injective factors, $B_{i}\subset M_{i}$ are regular and $A_{i} \subset M_{i}$ are Cartan subalgebras. Let $\mathcal{G}_{i}$ be the associated amenable discrete measured groupoids and $(\alpha_{i},u_{i})$ be the associated free cocycle action of $\mathcal{G}_{i}$ on $A_{i} \subset B_{i}$. Then there exists an isomorphism $\theta: M_{1} \rightarrow M_{2}$ satisfying $\theta(B_{1}) = B_{2}$ and $\theta(A_{1}) = A_{2}$ if and only if $\type(B_{1}) = \type(B_{2})$ and there is a nonsingular isomorphism $\sigma: \mathcal{G}_{1} \rightarrow \mathcal{G}_{2}$ with $\sigma(X_{1}) = X_{2}$ such that $ \modnew(\alpha_{1}) \sim_{\sigma} \modnew(\alpha_{2})$. \end{theorem}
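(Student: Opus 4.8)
The plan is to mimic the proof of Theorem~\ref{RegularInclusionsClassification}, using the classification of free groupoid actions on fields of Cartan inclusions from Theorem~\ref{CocycleCOnjugacyForActionsOnEquivalenceRelations} in place of the classification of centrally free groupoid actions on fields of injective factors. The ``only if'' direction is routine: an isomorphism $\theta\colon M_1\to M_2$ with $\theta(B_1)=B_2$ and $\theta(A_1)=A_2$ restricts to an isomorphism of the triples, so in particular $B_1\cong B_2$ and hence $\type(B_1)=\type(B_2)$, and by the correspondence between such triple inclusions and crossed products described in Section~\ref{prelim} it produces a nonsingular groupoid isomorphism $\sigma\colon\mathcal{G}_1\to\mathcal{G}_2$ with $\sigma(X_1)=X_2$ together with a cocycle conjugacy between $(\alpha_1,u_1)$ and $(\alpha_2,u_2)$ over $\sigma$ in the sense of Definition~\ref{DefCocycleConjugacyCartan}; applying the module map to this cocycle conjugacy (inner automorphisms have trivial module) yields $\modnew(\alpha_1)\sim_\sigma\modnew(\alpha_2)$.

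For the ``if'' direction it suffices to show that, under the two hypotheses, $(\alpha_1,u_1)$ and $(\alpha_2,u_2)$ are cocycle conjugate as actions on Cartan inclusions, since their crossed product triples then become isomorphic as inclusions $A_i\subset B_i\subset M_i$. When $B_i$ is of type $\textrm{I}_n$ or $\textrm{II}_1$ this follows from \cite{MR4124434}, so assume $B_i$ is infinite. Using $\sigma$ to identify $\mathcal{G}_1$ with $\mathcal{G}_2$ and their unit spaces with a common space $(Y,\nu)$, I would set $\beta=\alpha_2\circ\sigma$, a free cocycle action of $\mathcal{G}_1$ on the field $(A_{2,\sigma(y)}\subset B_{2,\sigma(y)})_{y\in Y}$, so that the hypothesis reads $\modnew(\alpha_1)\sim\modnew(\beta)$ as actions of $\mathcal{G}_1$.

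The first real step is to reduce to the case where $\alpha_1$ and $\beta$ act on one and the same measurable field of Cartan inclusions. For a.e.\ $y$, the ergodic hyperfinite equivalence relations $\mathcal{S}_{1,y}$ and $\mathcal{S}_{2,\sigma(y)}$ associated with $A_{1,y}\subset B_{1,y}$ and $A_{2,\sigma(y)}\subset B_{2,\sigma(y)}$ both have type $\type(B_1)=\type(B_2)$; when this type is $\textrm{II}_1$, $\textrm{II}_\infty$ or $\textrm{III}_\lambda$ with $\lambda\in(0,1]$ they are isomorphic by uniqueness of the hyperfinite ergodic equivalence relation of that type, and when it is $\textrm{III}_0$ the relation $\modnew(\alpha_1)\sim_\sigma\modnew(\alpha_2)$ in particular makes the associated flows of $\mathcal{S}_{1,y}$ and $\mathcal{S}_{2,\sigma(y)}$ conjugate, so $\mathcal{S}_{1,y}\cong\mathcal{S}_{2,\sigma(y)}$ by Krieger's theorem \cite{MR415341}. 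A measurable selection of such isomorphisms identifies the two fields of Cartan inclusions and carries the field of flows along, so I may assume $A_{1,y}=A_{2,\sigma(y)}$ and $B_{1,y}=B_{2,\sigma(y)}$, with $\alpha_1$ and $\beta$ free cocycle actions of $\mathcal{G}_1$ on this common field $(A_y\subset B_y)_{y\in Y}$ of infinite injective factors with Cartan subalgebras still satisfying $\modnew(\alpha_1)\sim\modnew(\beta)$.

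Finally, since the $B_y$ are infinite, Theorem~\ref{CohomologyVanishingGroupoidsCartanAlternate} lets me untwist: the $2$-cocycles of $\alpha_1$ and $\beta$ are coboundaries with values in the Cartan normalizers, so after perturbing I may assume $\alpha_1$ and $\beta$ are genuine free actions (freeness being preserved under perturbation by a $1$-cocycle of normalizing unitaries, since composing an automorphism with an inner one does not affect proper outerness). Theorem~\ref{CocycleCOnjugacyForActionsOnEquivalenceRelations}(i) then gives that $\alpha_1$ and $\beta$ are cocycle conjugate, hence $(\alpha_1,u_1)$ and $(\alpha_2,u_2)$ are cocycle conjugate over $\sigma$, and passing to crossed products produces the desired $\theta\colon M_1\to M_2$ with $\theta(B_1)=B_2$ and $\theta(A_1)=A_2$. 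I expect the main obstacle to be the reduction carried out in the previous paragraph --- identifying the two measurable fields of Cartan inclusions and transporting the conjugacy $\modnew(\alpha_1)\sim\modnew(\beta)$ through this identification --- which is precisely where the dichotomy between the various types of $B_i$ and the inputs of \cite{MR662736} and Krieger's theorem are needed; once the fields are matched, Theorems~\ref{CohomologyVanishingGroupoidsCartanAlternate} and \ref{CocycleCOnjugacyForActionsOnEquivalenceRelations} finish the argument without further difficulty.
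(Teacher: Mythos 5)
Your proposal is correct and follows essentially the same route as the paper's proof: reduce to the infinite case via \cite{MR4124434}, untwist the $2$-cocycles by Theorem \ref{CohomologyVanishingGroupoidsCartanAlternate}, and conclude by Theorem \ref{CocycleCOnjugacyForActionsOnEquivalenceRelations}. The only difference is that you spell out the intermediate reduction to a common measurable field of Cartan inclusions (via uniqueness of the hyperfinite ergodic relation of a given type, resp.\ Krieger's theorem in the $\textrm{III}_{0}$ case), a step the paper leaves implicit but which is indeed needed to apply Theorem \ref{CocycleCOnjugacyForActionsOnEquivalenceRelations} as stated.
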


\begin{proof}
Once again, we only show the isomorphism of inclusions if the conditions are satisfied. By Theorem B in \cite{MR4124434}, the only cases to consider are type $\textrm{I}_{\infty}$,  $\textrm{II}_{\infty}$ and type $\textrm{III}$. By Lemma \ref{CohomologyVanishingGroupoidsCartanAlternate}, we can assume that the 2-cocycles $u_{1}$ and $u_{2}$ are trivial. The genuine free actions are now cocycle conjugate by Theorem \ref{CocycleCOnjugacyForActionsOnEquivalenceRelations} and consecutively the crossed product inclusions are isomorphic.     
\end{proof}

\section{Model actions}
\label{sectionmodelactions}

In \cite{vaes_verjans_2022}, the authors provide a canonical construction of ergodic equivalence relations with prescribed flows. More precisely, given any ergodic flow $F: \mathbb{R} \curvearrowright (Y, \nu)$, they construct an adjoint flow $\hat{F}$ and a canonical ergodic equivalence relation $\mathcal{R}_{F}$ such that the associated flow of $\mathcal{R}_{F}$ is isomorphic to $\hat{F}$. The construction of the adjoint flow is involutive, i.e., the adjoint of the flow $\hat{F}$ is isomorphic to $F$. In this section, we exploit this construction to provide model actions on fields of ergodic equivalence relations with prescribed actions on the associated fields of flows.  

We recall the notion of adjoint flows introduced in \cite{vaes_verjans_2022}. For an ergodic flow $F: \mathbb{R} \curvearrowright (Y, \nu)$, by \cite[Proposition 3.1]{vaes_verjans_2022}, there is a unique (up to measure-preserving isomorphism) non-singular ergodic action of $\mathbb{R}^{2}$ on a standard $\sigma$-finite measure space $(Z, \zeta)$ such that the action of both $\mathbb{R} \times \{0\}$ and $\{0\} \times \mathbb{R}$ scale the measure $\zeta$ and such that $\mathbb{R} \curvearrowright Z /(\{0\} \times \mathbb{R})$ is isomorphic to $\mathbb{R} \curvearrowright Y$. This unique $\mathbb{R}^{2}$ action can be realized concretely by considering $(Z, \zeta) = (Y \times \mathbb{R}, \nu \times \lambda)$ where $d\lambda = e^{-t}dt$ and the action is given by
\begin{equation}
    (t,r) \cdot (y,s) = (t \cdot y,\omega(t,y)+t+r+s) \nonumber
\end{equation}
where $\omega: \mathbb{R} \times Y \rightarrow \mathbb{R}$ is the logarithm of the Radon-Nikodym cocycle. The ergodic flow $\mathbb{R} \curvearrowright Z /(\mathbb{R} \times \{0\})$ is called the adjoint flow of $F$ and denoted by $\hat{F}: \mathbb{R} \curvearrowright (\hat{Y},\hat{\nu})$. In the concrete realization of the $\mathbb{R}^{2}$ action as above, we note that the adjoint flow can be described as the action of $(\{0\} \times \mathbb{R})$ on $(Y \times \mathbb{R})/ \mathbb{R}$ where $\mathbb{R}$ acts by translation on the second variable.

Now let $\mathcal{S}$ be the unique countable ergodic hyperfinite equivalence relation of type $\textrm{III}_{1}$ on a standard probability space $(X, \mu)$ with the Radon-Nikodym one cocycle $\Omega: \mathcal{S} \rightarrow \mathbb{R}$. We consider the equivalence relation $\mathcal{R}_{F}$ on $(X \times Y)$ given by $(x,y) \sim (x',y')$ if and only if $(x,x') \in \mathcal{S}$ and $y' = \Omega(x,x') \cdot y$. By \cite[Proposition 3.4]{vaes_verjans_2022}, we have that $\mathcal{R}_{F}$ is ergodic, hyperfinite and has associated flow $\hat{F}$.

Let us consider two ergodic flows $F_{1}:\mathbb{R}  \curvearrowright (Y_{1}, \nu_{1})$ and $F_{2}:\mathbb{R} \curvearrowright(Y_{2},\nu_{2})$ and let $\psi$ be a $\mathbb{R}$-equivariant nonsingular isomorphism between them. Let $\omega_{i}$ denote the corresponding Radon-Nikodym cocycle. Now we consider the map: 
\begin{equation}
    \hat{\psi}: (\hat{Y}_{1}, \hat{\nu}_{1}) \rightarrow (\hat{Y}_{2}, \hat{\nu}_{2}), \; \; \hat{\psi}(y,s) = (\psi(y), \delta(y) + s) \nonumber
\end{equation}
where $\delta = \log \frac{d(\psi^{-1}_{*})\nu_{2}}{d\nu_{1}}$. Note that here we use the concrete realization of the adjoint flow to define the map. We now calculate: 
\begin{equation}
    \begin{split}
        (t,0) \cdot (\hat{\psi}(y,s)) &= (t,0) \cdot (\psi(y),\delta(y) +s) = (\alpha_{t}(\psi(y)), \omega_{2}(t,\psi(y)) + t + \delta(y) + s) \\
        &= (\psi(\alpha_{t}(y)), \delta(\alpha_{t}(y)) + \omega_{1}(t,y) + t + s) = \hat{\psi}(\alpha_{t}(y), \omega_{1}(t,y) + t + s) \\
        &= \hat{\psi}((t,0) \cdot(y,s)) \nonumber
    \end{split}
\end{equation}
where the third equality follows from the fact that $\psi$ is $\mathbb{R}$-equivariant. Thus $\hat{\psi}$ commutes with the $(\mathbb{R} \times \{0\})$ action and hence is well defined. Since the flow is given by translation, it is now easy to see that $\hat{\psi}$ is a $\mathbb{R}$-equivariant nonsingular isomorphism between the adjoint flows. Now we define the nonsingular isomorphism
\begin{equation}
    \Psi:(X \times Y_{1}, \mu \times \nu_{1}) \rightarrow (X \times Y_{2}, \mu \times \nu_{2}), \; \Psi(x,y) = (x, \psi(y)) \nonumber
\end{equation}
We consider the Maharam extension $c(\mathcal{R}_{i})$ of $\mathcal{R}_{i}$ together with the measure scaling action of $\mathbb{R}$. We have then an isomorphism $\tilde{\Psi}$ between $c(\mathcal{R}_{1})$ and $c(\mathcal{R}_{2})$ given by: 
\begin{equation}
    \tilde{\Psi}(x,y,r) = (x,\psi(y), r + \delta(y)) \nonumber
\end{equation}

Note that the space of $\mathcal{R}_{F_{i}}$-invariant functions $L^{\infty}(X \times Y_{i} \times \mathbb{R})^{\mathcal{R}_{F_{i}}}$ and $L^{\infty}(\hat{Y}_{i})$ are isomorphic. It can be checked that this isomorphism is induced by the set map $\pi_{i}: X \times Y_{i} \times \mathbb{R}_{i} \rightarrow \hat{Y}_{i}$ given by: 
\begin{equation}
    \pi_{i}(x,y,t) = [(y,t)] \nonumber
\end{equation}
Now we have that: 
\begin{equation}
    \begin{split}
        \modnew(\Psi) ([y,t]) &= \modnew(\Psi)(\pi_{1}(x,y,t)) = \delta(y) \cdot \pi_{2}(x, \psi(y), t) \\
        &= \delta(y) \cdot [(\psi(y),t)] = [(\psi(y), \delta(y) + t)] \\ &= \hat{\psi}([(y,t)])
    \end{split} \nonumber
\end{equation}
Thus we have that $\modnew(\Psi) = \hat{\psi}$. We summarize the above discussion in the following proposition. 

\begin{proposition}
\label{modelAutomorphism}
    For $i \in \{1,2\}$, let $F_{i}: \mathbb{R} \curvearrowright (Y_{i},\nu_{i})$ be two ergodic flows and $\psi$ be a nonsingular $\mathbb{R}$-equivariant isomorphism between them. Then $\psi$ induces an isomorphism $\hat{\psi}$ between the adjoint flows $\hat{F}_{i}$ and a nonsingular isomorphism $\Psi: \mathcal{R}_{F_{1}} \rightarrow \mathcal{R}_{F_{2}}$ such that $\modnew(\Psi) = \hat{\psi}$. 
\end{proposition}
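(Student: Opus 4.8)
The plan is essentially to collect the explicit computations that immediately precede the statement and organize them into a clean argument. First I would recall the concrete realization of the adjoint flow $\hat{F}_i$: the space $\hat{Y}_i$ is the quotient $(Y_i \times \mathbb{R})/(\mathbb{R}\times\{0\})$ where $\mathbb{R}^2$ acts by $(t,r)\cdot(y,s) = (t\cdot y, \omega_i(t,y)+t+r+s)$, and the flow $\hat{F}_i$ is the residual $(\{0\}\times\mathbb{R})$-action, given by translation in the second coordinate. Then I would define $\hat{\psi}(y,s) = (\psi(y),\delta(y)+s)$ with $\delta = \log\frac{d(\psi^{-1}_*)\nu_2}{d\nu_1}$, and verify the two things needed for this to descend to a well-defined $\mathbb{R}$-equivariant isomorphism $\hat{F}_1 \to \hat{F}_2$: (i) $\hat\psi$ intertwines the $(\mathbb{R}\times\{0\})$-actions, which is the displayed four-line computation using that $\psi$ is $\mathbb{R}$-equivariant and that $\delta$ satisfies the cocycle identity $\delta(\alpha_t(y)) + \omega_1(t,y) = \omega_2(t,\psi(y)) + \delta(y)$; and (ii) $\hat\psi$ commutes with translation in the second variable, which is immediate. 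Since $\psi$ is a nonsingular isomorphism and $\delta$ is Borel, $\hat\psi$ is nonsingular as well, so $\hat\psi$ is the required isomorphism of the adjoint flows.

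Next I would treat the equivalence relation level. Define $\Psi(x,y) = (x,\psi(y))$ on $X\times Y_1 \to X\times Y_2$. To check $\Psi$ is an orbit equivalence $\mathcal{R}_{F_1}\to\mathcal{R}_{F_2}$: if $(x,y)\sim(x',y')$ in $\mathcal{R}_{F_1}$, then $(x,x')\in\mathcal{S}$ and $y' = \Omega(x,x')\cdot y$; applying $\psi$ and its $\mathbb{R}$-equivariance gives $\psi(y') = \Omega(x,x')\cdot\psi(y)$, so $(\Psi(x,y),\Psi(x',y'))\in\mathcal{R}_{F_2}$, and the same argument with $\psi^{-1}$ gives the reverse inclusion. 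Nonsingularity of $\Psi$ follows from nonsingularity of $\psi$. Then I would pass to the Maharam extensions: $\tilde\Psi(x,y,r) = (x,\psi(y),r+\delta(y))$ is the induced $\mathbb{R}$-equivariant, measure-scaling isomorphism $c(\mathcal{R}_{F_1})\to c(\mathcal{R}_{F_2})$, where the shift by $\delta(y)$ is exactly the correction term $\log\frac{d(\Psi^{-1})_*(\mu\times\nu_2)}{d(\mu\times\nu_1)}$ coming from the general formula for $\tilde\Psi$ recalled earlier in the section.

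Finally I would identify $\modnew(\Psi)$ with $\hat\psi$ via the factor maps. Using the explicit map $\pi_i(x,y,t) = [(y,t)]\in\hat{Y}_i$ implementing the isomorphism $L^\infty(X\times Y_i\times\mathbb{R})^{\mathcal{R}_{F_i}}\cong L^\infty(\hat Y_i)$, together with the defining relation $\modnew(\Psi)\circ\pi_1 = \pi_2\circ\tilde\Psi$ (equivalently the formula $\modnew(\Psi)(\pi_1(x,y,t)) = \delta(y)\cdot\pi_2(\Psi(x,y),t)$ for $\Psi\in\Aut(\mathcal{R})$ extended to this isomorphism setting), one computes $\modnew(\Psi)([y,t]) = \delta(y)\cdot[(\psi(y),t)] = [(\psi(y),\delta(y)+t)] = \hat\psi([y,t])$, which is the displayed chain of equalities. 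This yields $\modnew(\Psi) = \hat\psi$ and completes the proof. I do not expect any genuine obstacle here: every ingredient is either already proved in \cite{vaes_verjans_2022} or is one of the explicit computations displayed above; the only point requiring a little care is checking that $\delta$ really does satisfy the cocycle identity needed in step (i), which is a standard consequence of the chain rule for Radon–Nikodym derivatives combined with $\mathbb{R}$-equivariance of $\psi$, and making sure the measure-scaling bookkeeping in $\tilde\Psi$ matches the convention $d\lambda = e^{-t}dt$ used throughout.
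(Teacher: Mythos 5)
Your proposal is correct and follows essentially the same route as the paper: the paper's proof of Proposition \ref{modelAutomorphism} is precisely the discussion preceding it (the definition of $\hat{\psi}$ via $\delta$, the equivariance computation, the maps $\Psi$ and $\tilde{\Psi}$, and the identification of $\modnew(\Psi)$ with $\hat{\psi}$ through the factor maps $\pi_{i}$), which you reproduce faithfully. The extra details you supply --- the explicit check that $\Psi$ is an orbit equivalence and the cocycle identity for $\delta$ --- are left implicit in the paper but are exactly the right points to verify.
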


Since the construction of the adjoint flow is involutive, i.e, $\hat{\hat{F}}$ is isomorphic to $F$, Proposition \ref{modelAutomorphism} gives a way to construct an isomorphism between ergodic equivalence relations inducing a given isomorphism between flows.  Now, from this construction, we have almost immediately the following theorem providing a model action of a groupoid realizing the invariant. 

\begin{theorem}
\label{modelAction}
    Let $(Z,\Omega)$ be a standard probability space and $\mathcal{G}$ be a discrete measured groupoid with $\mathcal{G}^{(0)} = Z$. Let $(F_{z}:\mathbb{R}\curvearrowright (Y_{z},\nu_{z}))_{z \in Z}$ be a measurable field of ergodic flows. Let $\mathcal{\psi}$ be an action of the groupoid $\mathcal{G}$ on the field of flows $(F_{z})_{z \in Z}$, i.e., for each $g \in \mathcal{G}$ there is a nonsingular $\mathbb{R}$-equivariant isomorphism $\psi_{g}: F_{s(g)} \rightarrow F_{t(g)}$ such that $\psi_{h} \circ \psi_{g} = \psi_{hg}$ for a.e. $(g,h) \in \mathcal{G}^{(2)}$ . Then $\psi$ canonically lifts to an outer action $\Psi$ of $\mathcal{G}$ on the field of ergodic equivalence relations $(\mathcal{\mathcal{R}}_{F_{z}})_{z \in Z}$ such that $\modnew(\Psi) \sim \hat{\psi}$.   
\end{theorem}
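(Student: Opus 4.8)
The plan is to lift the flow action $\psi$ to an action $\Psi$ on the field of equivalence relations by using the canonical nature of the construction $F \mapsto (\hat{F}, \mathcal{R}_F)$ together with Proposition~\ref{modelAutomorphism}. First I would fix, once and for all, the universal type $\textrm{III}_1$ ergodic hyperfinite equivalence relation $\mathcal{S}$ on $(X,\mu)$ with Radon--Nikodym cocycle $\Omega: \mathcal{S} \to \mathbb{R}$, and for each $z \in Z$ set $\mathcal{R}_{F_z}$ to be the equivalence relation on $X \times Y_z$ built from $\mathcal{S}$ and $F_z$ exactly as recalled before Proposition~\ref{modelAutomorphism}. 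Measurability of the field $z \mapsto \mathcal{R}_{F_z}$ follows because the construction only uses the fixed datum $\mathcal{S}$ together with the measurable field $(Y_z,\nu_z,F_z)_{z\in Z}$; I would spell this out by noting that the defining relation $(x,y)\sim(x',y') \iff (x,x')\in\mathcal{S},\ y' = \Omega(x,x')\cdot y$ is a Borel condition depending measurably on $z$.

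Next, for each $g \in \mathcal{G}$ the $\mathbb{R}$-equivariant nonsingular isomorphism $\psi_g: F_{s(g)} \to F_{t(g)}$ feeds into the construction of Proposition~\ref{modelAutomorphism}: it produces a nonsingular isomorphism $\Psi_g: \mathcal{R}_{F_{s(g)}} \to \mathcal{R}_{F_{t(g)}}$ given by the explicit formula $\Psi_g(x,y) = (x,\psi_g(y))$, and an adjoint isomorphism $\hat{\psi}_g : \hat{F}_{s(g)} \to \hat{F}_{t(g)}$, with $\modnew(\Psi_g) = \hat{\psi}_g$. The key point is that both assignments $g \mapsto \Psi_g$ and $g \mapsto \hat{\psi}_g$ are \emph{strictly} functorial, not merely up to inner perturbation: since $\Psi_g$ is literally $\id_X \times \psi_g$ on $X \times Y_{s(g)}$, the cocycle identity $\psi_h \circ \psi_g = \psi_{hg}$ for a.e.\ $(g,h) \in \mathcal{G}^{(2)}$ immediately gives $\Psi_h \circ \Psi_g = \Psi_{hg}$ a.e., so $\Psi$ is a genuine (not merely cocycle) action of $\mathcal{G}$ on the field $(\mathcal{R}_{F_z})_{z\in Z}$. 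Likewise $\hat{\psi}_h \circ \hat{\psi}_g = \hat{\psi}_{hg}$, so $\hat{\psi}$ is a genuine action of $\mathcal{G}$ on the field of adjoint flows $(\hat{F}_z)_{z \in Z}$, and the pointwise identities $\modnew(\Psi_g) = \hat{\psi}_g$ assemble to the conjugacy $\modnew(\Psi) \sim \hat{\psi}$ in the sense of the groupoid-action definition (with the implementing field of $\mathbb{R}$-equivariant isomorphisms being the identity on $\hat{Y}_z$).

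The remaining point is \emph{outerness} of $\Psi$: I must check that for a.e.\ $z \in Z$ and every $g \in \Gamma_z \setminus \{e\}$ the automorphism $\Psi_g \in \Aut(\mathcal{R}_{F_z})$ is outer, i.e.\ $(w,\Psi_g(w)) \notin \mathcal{R}_{F_z}$ for a.e.\ $w$. Since $\Psi_g = \id_X \times \psi_g$ and $\mathcal{R}_{F_z}$ only identifies points whose first coordinates are $\mathcal{S}$-equivalent, a nontrivial $\psi_g$ with $\psi_g \neq \id$ as a transformation of $(Y_z,\nu_z)$ cannot be absorbed into $\mathcal{R}_{F_z}$; more precisely, if $\Psi_g$ were inner it would lie in $[\mathcal{R}_{F_z}]$, which would force $\psi_g(y) = \Omega(x,x')\cdot y$ for some $(x,x') \in \mathcal{S}$, and using ergodicity of $\mathcal{S}$ together with the hypothesis that $F_z$ is not the translation flow $\mathbb{R}\curvearrowright\mathbb{R}$ (so that $F_z$ has no free transitive part) one shows this forces $\psi_g = \id$, contradicting $g \neq e$ and the fact that $\psi$ is an action of $\mathcal{G}$ on a field where the isotropy acts non-trivially — here one invokes that the associated $\mathcal{G}$-action on the flows has no interior isotropy since $\psi$ genuinely realizes $\mathcal{G}$. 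This outerness verification is the main obstacle: it is where the hypothesis excluding the translation flow (as in Theorem~C) is essential, and it requires care in relating ``inner in $[\mathcal{R}_{F_z}]$'' back to a statement about the flow $F_z$ via the Maharam extension picture. Once outerness is established, the theorem follows by assembling the pieces: $\Psi$ is an outer groupoid action on $(\mathcal{R}_{F_z})_{z\in Z}$ with $\modnew(\Psi) \sim \hat{\psi}$, which is the conclusion.
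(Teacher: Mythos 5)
Your construction of the lift and the verification that $\Psi_g=\id_X\times\psi_g$ is a genuine action with $\modnew(\Psi_g)=\hat{\psi}_g$ match the paper, but the outerness step contains a genuine gap that your construction cannot repair. For $\Psi_g=\id_X\times\psi_g$ and $g\in\Gamma_z$, a point $(x,y)$ satisfies $(x,y)\sim\Psi_g(x,y)$ in $\mathcal{R}_{F_z}$ exactly when $\psi_g(y)=\Omega(x,x)\cdot y=y$, so outerness of $\Psi_g$ is equivalent to $\psi_g$ acting essentially freely on $(Y_z,\nu_z)$. Nothing in the hypotheses guarantees this: the theorem only assumes that $\psi$ is an action of $\mathcal{G}$ on the field of flows, and the isotropy groups may act trivially. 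Indeed, when $F_z$ is the trivial flow on a point (the type $\textrm{III}_1$ case, which is explicitly allowed here), every $\psi_g$ with $g\in\Gamma_z$ is forced to be the identity, so your $\Psi_g$ is the identity and is certainly not outer. Your appeal to ``the isotropy acts non-trivially since $\psi$ genuinely realizes $\mathcal{G}$'' is not a hypothesis of the theorem, and the exclusion of the translation flow $\mathbb{R}\curvearrowright\mathbb{R}$ belongs to Theorem C (where it ensures the type $\textrm{III}$ property of $\mathcal{R}_{F_x}$), not to this statement; it would not rescue the argument in any case.

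The paper's proof supplies the missing outerness by an auxiliary device you omit entirely. It replaces $\mathcal{S}$ by $\mathcal{S}_1\times\mathcal{S}_0$ (still the unique ergodic hyperfinite type $\textrm{III}_1$ relation), so that $\mathcal{R}_{F_z}\cong\mathcal{S}_0\times\mathcal{R}^1_{F_z}$, and proves separately (Claim 1, via a Bernoulli-shift construction on $([0,1],\mu_0)^{G_x}$) that every discrete measured groupoid admits an outer action $\Phi$ on the hyperfinite ergodic $\textrm{II}_1$ relation $\mathcal{S}_0$. The lift is then the diagonal action $\Psi_g=\Phi_g\times\Psi^1_g$: outerness comes entirely from the $\Phi$-component regardless of whether $\psi_g$ is trivial, while the $\mathcal{S}_0$-component contributes nothing to the flow of weights, so $\modnew(\Psi)=\hat{\psi}$ is preserved. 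To complete your proof you would need to incorporate both of these ingredients.
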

\begin{proof}
    Let $\mathcal{S}_{1}$ on $(X_{1},\mu_{1})$ and $\mathcal{S}_{0}$ on $(X_{0},\mu_{0})$ be the unique countable ergodic hyperfinite equivalence relations of type $\textrm{III}_{1}$ and $\textrm{II}_{1}$ respectively and let $\mathcal{S} = \mathcal{S}_{1} \times \mathcal{S}_{0}$ be the direct product on a standard probability space $(X, \mu) = (X_{1} \times X_{0}, \mu_{1} \times \mu_{0})$. It is easy to see that $\mathcal{S}$ is still ergodic, hyperfinite and of type $\textrm{III}_{1}$. Recall that we define $\mathcal{R}_{F_{z}}$ on the space $(X \times Y_{z})$ as $(x,y) \sim (x',y')$ if and only if $(x,x') \in \mathcal{S}$ and $y'= \omega(x,x')\cdot y$. Clearly $\mathcal{R}_{F_{z}}$ is a direct product of $\mathcal{S}_{0}$ and $\mathcal{R}^{1}_{F_{z}}$ where $\mathcal{R}^{1}_{F_{z}}$ is defined on $X_{1} \times Y_{z}$ in the same way as above.   
    
    Claim 1: \textit{Every discrete measured groupoid $\mathcal{G}$ admits an outer action on the unique ergodic hyperfinite $\textrm{II}_{1}$ equivalence relation}. Assume for the moment that Claim 1 is true, and let $\Phi$ be such an outer action on $\mathcal{S}_{0}$. We define the isomorphisms $\Psi^{1}_{g}: \mathcal{R}^{1}_{F_{s(g)}} \rightarrow \mathcal{R}^{1 }_{F_{t(g)}}$ such that $\modnew(\Psi^{1}_{g}) = \hat{\psi}_{g}$ as in Proposition \ref{modelAutomorphism}. We now consider the diagonal action $\Psi_{g} = \Phi_{g} \times \Psi^{1}_{g}$ of $\mathcal{G}$ on the field $(\mathcal{R}_{F_{z}})_{z \in Z}$. Clearly, $\modnew(\Psi) = \hat{\psi}$ and the only thing to check is that this is indeed an action. This is clear as for $y \in Y_{s(g)}$, we have that 
    \begin{equation}
        \Psi^{1}_{h} \circ \Psi^{1}_{g}(x,y) = \Psi^{1}_{h}(x,\psi_{g}(y)) = (x, \psi_{h} \circ \psi_{g}(y)) = (x, \psi_{hg}(x)) = \Psi^{1}_{hg}(x,y) \nonumber
    \end{equation}
Since $\Phi$ is outer, $\Psi$ is a well-defined outer action on the field of equivalence relations $\mathcal{R}_{F_{z}}$ satisfying the conditions of the theorem. 

\textit{Proof of Claim 1:} Let $\mu_{0}$ be a probability measure on $[0,1]$ and let $X$ denote the unit space of $\mathcal{G}$. Let $G_{x}$ denote the set $\{g \in \mathcal{G} \; | \; t(g) = x\}$. Let $(K_{x}, \mu_{x})$ denote the standard probability space $([0,1], \mu_{0})^{G_{x}}$ for all $x \in X$. Consider a fixed free weakly mixing $\mathbb{Z}$-action on $([0,1], \mu_{0})$ and let the orbit equivalence relation of the diagonal $\mathbb{Z}$ action on $(K_{x}, \mu_{x})$ be $\mathcal{R}_{x}$. By definition $\mathcal{R}_{x}$ is p.m.p., hyperfinite and since $\mathbb{Z} \curvearrowright [0,1]$ is weakly mixing, $\mathcal{R}_{x}$ is ergodic. We construct an action $\mathcal{G} \curvearrowright (\mathcal{R}_{x})_{x \in X}$ as follows: for $g \in \mathcal{G}$, we define $\alpha_{g}: K_{s(g)} \rightarrow K_{t(g)}$ by $\alpha_{g}(b)_{h} = b_{g^{-1}h}$ for all $b \in K_{s(g)}$. For a.e. $x \in X$, any $g \in \Gamma_{x}$ 
and a fixed $n \in \mathbb{Z}$, the set $\{k \in K_{x} \; | \; \alpha_{g}(k)  = n \cdot k \}$ has measure zero, and this can be proved in the same way as in \cite[Lemma 2.2]{MR4243020}. Taking intersections, it is clear then that for a.e. $x \in X$ and any $g \in \Gamma_{x}$, $\alpha_{g}$ is outer as an automorphism on $\mathcal{R}_{x}$, and hence the action $\mathcal{G}\curvearrowright (\mathcal{R}_{x})_{x \in X}$ is outer. Now we can fix a copy of the unique hyperfinite ergodic type $\textrm{II}_{1}$ equivalence relation $\mathcal{S}_{0}$ and let $\phi_{x}: \mathcal{R}_{x} \rightarrow \mathcal{S}_{0}$ be a field of isomorphisms. Defining $\beta_{g}$ to be $\phi_{t(g)} \circ \alpha_{g} \circ \phi_{s(g)}^{-1}$, we have that $\beta$ is the required outer action of $\mathcal{G}$ on $\mathcal{S}_{0}$.  
\end{proof}

\begin{proof}[Proof of Theorem C]
    We consider the field of adjoint flows $(\hat{F}_{x})_{x \in X}$ and the induced action $\hat{\psi}$ of $\mathcal{G}$ on $(\hat{F}_{x})$. By Theorem \ref{modelAction}, $\hat{\psi}$ lifts to an outer action $\hat{\Psi}$ of $\mathcal{G}$ on a field of ergodic hyperfinite equivalence relations $\mathcal{R}_{F_{x}}$ such that $\modnew(\hat{\Psi}) = \hat{\hat{\psi}} = \psi$. Since outside a null set, $F_{x}$ is not the translation action $\mathbb{R} \curvearrowright \mathbb{R}$, we have that $R_{F_{x}}$ is of type $\textrm{III}$ almost everywhere. We now consider the corresponding injective factors $B_{x} = L(\mathcal{R}_{F_{x}})$ with the obvious Cartan subalgebras $A_{x}$. Let $M = (B_{x})_{x \in X} \rtimes_{\hat{\Psi}} \mathcal{G}$ be the crossed product, and $B$ and $A$ be the direct integrals of the fields $(B_{x})_{x \in X}$ and $(A_{x})_{x \in X}$ respectively. Since $\mathcal{G}$ is ergodic and $\hat{\Psi}$ is an outer action, we have that by construction $M$ is a factor, $B \subset M$ is a regular subalgebra, and $A \subset M$ is a Cartan subalgebra. Since such inclusions are classified by the associated discrete measured groupoid and the associated actions on the field of flows as in Theorem B, the desired conclusion follows.     
\end{proof}

We conclude this section with the following corollary to Theorem \ref{modelAction}. The tracial version of this was proved in \cite[Corollary 3.4]{MR4124434}.  

\begin{corollary}
\label{CartanIffRelativeCommutant}
Let $M$ be an injective factor and $B$ be a regular subalgebra of $M$ of type $\textrm{III}$ with a conditional expectation $E: M \rightarrow B$. Then $\mathcal{Z}(\widetilde{B}) = \widetilde{B}' \cap \widetilde{M}$ if and only if there exists a Cartan subalgebra $A$ of $M$ such that $A \subset B \subset M$.      
\end{corollary}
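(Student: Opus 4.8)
The plan is to deduce both implications from the two directions already developed in the paper, combining Theorem A with Theorem C. First I would dispose of the easy direction: suppose $A$ is a Cartan subalgebra of $M$ with $A \subset B \subset M$. Then $B$ is an intermediate subalgebra of the Cartan inclusion $A \subset M$, so by \cite[Theorem 1.1]{MR1978219} there is a faithful normal conditional expectation $M \to B$ (which must coincide with $E$ by uniqueness) and $A$ is Cartan in $B$; moreover at the level of continuous cores $\widetilde A \subset \widetilde B \subset \widetilde M$ is again an inclusion of the form $L^\infty(X\times\mathbb R) \subset L(c(\mathcal S)) \subset L(c(\mathcal R))$, so $\widetilde A$ is maximal abelian in $\widetilde M$. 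Hence
\begin{equation}
\widetilde B' \cap \widetilde M \subseteq \widetilde A' \cap \widetilde M = \widetilde A \subseteq \widetilde B,
\nonumber
\end{equation}
which forces $\widetilde B' \cap \widetilde M = \mathcal Z(\widetilde B)$. This is exactly the observation already recorded in the paragraph preceding Theorem \ref{CohomologyVanishingGroupoidsCartanAlternate}, so this direction is essentially immediate.

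For the converse, assume $B \subset M$ is a regular subalgebra of type $\textrm{III}$ with $\mathcal Z(\widetilde B) = \widetilde B' \cap \widetilde M$. Let $\mathcal G = \mathcal G_{B\subset M}$ be the associated discrete measured amenable groupoid, with unit space $(X,\mu)$, and let $(\alpha,u)$ be the associated cocycle action of $\mathcal G$ on the field $(B_x)_{x\in X}$ of injective type $\textrm{III}$ factors obtained from the ergodic decomposition; since $M$ is a factor, $\mathcal G$ is ergodic. By Lemma \ref{CentrallyFreeCondition}, the relative commutant hypothesis is equivalent to $(\alpha,u)$ being centrally free, so in particular its action $\modnew(\alpha)$ on the field of flows of weights $(\mathcal C_{B_x})_{x\in X}$ is a well-defined groupoid action on a field of ergodic flows $(F_x)_{x\in X}$. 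Because each $B_x$ is of type $\textrm{III}$, none of the $F_x$ is the translation flow $\mathbb R \curvearrowright \mathbb R$, so the hypotheses of Theorem C are met. Applying Theorem C to $\mathcal G$ and the field $(F_x)_{x\in X}$ with the action $\psi = \modnew(\alpha)$, I obtain an injective factor $M_1$ with a Cartan subalgebra $A_1 \subset M_1$ and an intermediate regular subalgebra $A_1 \subset B_1 \subset M_1$ of type $\textrm{III}$, a groupoid isomorphism $\sigma\colon \mathcal G_{B_1\subset M_1} \to \mathcal G$, and a cocycle action $(\Psi,w)$ of $\mathcal G_{B_1\subset M_1}$ on the associated field of equivalence relations with $\modnew(\Psi) \sim_\sigma \modnew(\alpha)$.

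Now I would invoke Theorem A to identify $B \subset M$ with $B_1 \subset M_1$. By construction $B_1 \subset M_1$ is a regular subalgebra of an injective factor containing a Cartan subalgebra, so by the easy direction just proved it satisfies $\mathcal Z(\widetilde{B_1}) = \widetilde{B_1}' \cap \widetilde{M_1}$; and both $B$ and $B_1$ are of type $\textrm{III}$, so $\type(B) = \type(B_1)$. The groupoid isomorphism $\sigma$ conjugates the groupoids $\mathcal G_{B_1 \subset M_1} \to \mathcal G_{B\subset M}$, and since the module of a cocycle action is, under the Packer--Raeburn untwisting of Theorem \ref{2CohomologyVanishingGroupoidsAlternate}, unchanged, the relation $\modnew(\Psi)\sim_\sigma\modnew(\alpha)$ is precisely the hypothesis $\modnew(\alpha_1)\sim_\sigma\modnew(\alpha_2)$ required in Theorem A (here $\alpha_2 = \alpha$, $\alpha_1 = \Psi$). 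Theorem \ref{RegularInclusionsClassification} then produces an isomorphism $\theta\colon M_1 \to M$ with $\theta(B_1) = B$. Setting $A := \theta(A_1)$ gives a Cartan subalgebra of $M$ with $A \subset B \subset M$, as required.

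The main obstacle is a bookkeeping one rather than a conceptual one: I must make sure the invariant produced by Theorem C (an action on a field of \emph{adjoint} flows, together with a groupoid isomorphism) is matched up correctly with the invariant that Theorem A consumes, i.e. that the module $\modnew(\Psi)$ of the lifted groupoid action on the field of Cartan-equipped equivalence relations really equals the action on the field of flows of weights that Theorem A sees — this is exactly the content of $\modnew(\hat\Psi) = \hat{\hat\psi} = \psi$ from the proof of Theorem C and the identification in Proposition \ref{modelAutomorphism}. Once this compatibility is in place, the corollary follows by simply chaining Theorem C and Theorem A together, with the cheap direction handled by the maximal abelianness of $\widetilde A$ in $\widetilde M$.
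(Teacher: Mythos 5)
Your proposal is correct and follows essentially the same route as the paper: the easy direction via maximal abelianness of $\widetilde{A}$ in $\widetilde{M}$, and the converse by building a model Cartan-containing inclusion with the same groupoid and module invariant and transporting the Cartan subalgebra through the classification isomorphism. The only cosmetic difference is that you invoke the packaged Theorems C and A, while the paper calls Theorem \ref{modelAction} and Theorem \ref{GroupoidActionsClassification} directly; unwound, the two arguments coincide, and your flagged bookkeeping point (matching the adjoint-flow invariant of the model with the flow of weights seen by Theorem A) is handled exactly as in the paper.
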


\begin{proof}
If $A \subset M$ is a Cartan subalgebra such that $A \subset B$, then by the discussion preceding Theorem \ref{CohomologyVanishingGroupoidsCartanAlternate}, it is clear that $\mathcal{Z}(\widetilde{B}) = \widetilde{B}' \cap \widetilde{M}$. Conversely, suppose the relative commutant condition is satisfied. We can write $M$ as a crossed product $B \rtimes_{\alpha} \mathcal{G}$ of a free action of a discrete measured amenable groupoid $\mathcal{G}$ on $B$ by Theorem \ref{2CohomologyVanishingGroupoidsAlternate}. Let $\mathcal{G}^{(0)} = X$ and choose a field of ergodic equivalence relations $\mathcal{R}_{x}$ and an action $\beta$ of $\mathcal{G}$ on $(\mathcal{R}_{x})_{x \in X}$ such that $\modnew(\beta) = \modnew(\alpha)$ by Theorem \ref{modelAction}. Let $(C_{x} \subset D_{x})_{x \in X}$ denote the corresponding field of Cartan inclusions with each $D_{x}$ being an injective factor. Let $C$ and $D$ denote the direct integrals of $(C_{x})_{x \in X}$ and $(D_{x})_{x \in X}$ respectively and let $N$ denote the corresponding crossed product $(D_{x})_{x \in X} \rtimes_{\beta} \mathcal{G}$. By Theorem \ref{GroupoidActionsClassification}, there exists a *-isomorphism $\theta: N \rightarrow M$ satisfying $\theta(D) = B$. It can be easily checked now that $A = \theta(C)$ is a Cartan subalgebra of $M$ and is obviously contained in $B$. 
\end{proof}

\section*{Acknowledgments}
The author is supported by FWO research project G090420N of the Research Foundation Flanders. The author would like to thank his supervisor Stefaan Vaes for his immense support throughout the process of writing this article. The author would also like to thank the reviewer for their insightful comments on the article.  

\printbibliography

\end{document}